\theoremstyle{plain}
\newtheorem{lemma}{Lemma}
\newtheorem{corollary}{Corollary}
\newtheorem{proposition}{Proposition}
\theoremstyle{definition}
\newtheorem{definition}{Definition}
\theoremstyle{remark}
\newtheorem{remark}{Remark}
\DeclareMathOperator{\cl}{cl}  
\DeclareMathOperator{\coz}{coz}
\numberwithin{equation}{section}  
\begin{document}
\title[Maximal ideals in rings of  real measurable functions]
{Maximal ideals in rings of  real measurable functions}                               

\author{A. A. Estaji$^{1*}$}
\address{$^{1,2,3}$ Faculty of Mathematics and Computer Sciences, Hakim Sabzevari University, Sabzevar, Iran.}
\thanks{*Corresponding author}
\email{aaestaji@hsu.ac.ir}

\author{A. Mahmoudi Darghadam$^{2}$}
 
\email{m.darghadam@yahoo.com}
\author{H. Yousefpour$^{3}$}
\email{h.yousef79@gmail.com}

\begin{abstract}
Let $ M (X)$ be the ring   of all real measurable functions on a measurable
space $(X, \mathscr{A})$.
In this article, we show that   every ideal of $M(X)$ is a $Z^{\circ}$-ideal.
Also, we give several characterizations of maximal ideals of $M(X)$,
mostly in terms of certain lattice-theoretic properties of $\mathscr{A}$. 
  The notion  of  $T$-measurable space is introduced.
  Next, we show that for every
 measurable space $(X,\mathscr{A})$ there
exists a $T$-measurable space
 $(Y,\mathscr{A}^{\prime})$ such that
 $M(X)\cong  M(Y)$ as rings.
  The notion  of   compact  measurable space  is introduced.
 Next, we prove that
 if $(X, \mathscr{A})$ and  $(Y, \mathfrak{M^{\prime}})$ are two compact
$T$-measurable spaces, then  $X\cong Y$ as measurable spaces if and only if
 $M(X)\cong  M (Y)$ as rings.
\end{abstract}

\subjclass[2010]{28A20, 13A30, 54C30, 06D22}

\keywords{Real measurable function,
Compact measurable space,
$\sigma$-compact measurable space,
Locally compact measurable space}

\maketitle

\section{Introduction} \label{1}
It is well known that $\mathbb{R}^X$ is the collection of all
 real-valued functions on $X$, for every non-empty set $X$
 and this  with the (pointwise) addition and multiplication
 is a reduced commutative ring with identity.
 Let $(X, \mathscr{A})$ be measurable
space and  $M(X, \mathscr{A})$, abbreviated $M(X)$ be
the set of all  real-valued measurable functions on $X$,
then $M(X)$ is a subring of $\mathbb{R}^X$. 
Viertl in \cite{Viertl1977} shows that 
if $X$ is a topological space and $ \mathscr{A}$ is  the set of all  Borel sets  of $X$ then 
 every maximal ideal of $M(X, \mathscr{A})$ is real if and only if 
$ \mathscr{A}$ contains only a
finite number of elements   if and only if 
 every ideal is fixed  of $M(X, \mathscr{A})$. 
 Hager in \cite{Hager1971} shows that if $(X, \mathscr{A})$
is a measurable space then   $M(X)$ is
a regular ring in the sense of von
Neumann (i.e., given $f\in M(X)$ there is $g\in M(X)$ with
$f^2g =f$).  
Azadi et al.  in \cite{AzadiHenriksenMomtahan2009} prove  that if $(X, \mathscr{A})$
is a measurable space then   $M(X)$ is
an $\aleph_0$-self-injective ring and moreover,
if $ \mathscr{A}$  contains all singletons, then
$M(X)$ has an essential socle.
Amini  et al.  in \cite{AminiAminiMomtahanShirdareh2013}
generalized, simultaneously, the ring of real-valued
continuous functions and the ring of real-valued measurable
functions.
Momtahan in \cite{Momtahan2010}
 studied essential ideals, socle, and some related ideals of
  rings of real valued measurable functions.
  He also studied the Goldie dimension of rings
  of measurable functions.
In the  paper  \cite{EstajiMahmoudi} Estaji and Mahmoudi Darghadam investigated  
 rings of  real measurable functions vanishing at infinity on a measurable space.

Let $X$ be any topological space and
let $\mathbb{R}$ be the space of
real numbers with its usual topology.
$C(X)$ is the set of all real-valued
continuous functions with domain $X$ (see \cite{Hewitt1948, Gillman1979}.
The main progress in  the area of rings of real-valued
continuous functions defined over a topological space $X$
 was provided by three historical subject as follows:

\noindent (1).  Completely regular spaces were first discussed by Tychonoff  \cite{Tychonoff1930}.
We recall from   \cite[Theorem 3.9]{Gillman1979} that
for every topological space $X$,
there exists a completely regular  Hausdorff space
$Y$ and a continuous mapping $\tau$ of $X$
onto $Y$, such that the
mapping $g\mapsto go\tau$ is an isomorphism of
$C( Y)$ onto $C(X)$, which the reduction to completely
regular spaces  is due to Stone \cite[P. 460]{Stone1937} and $\check{C}$ech \cite[P. 826]{Cech1937}.

\noindent  (2).
For every $f\in \mathbb{R}^X$, $Z(f):=\{x\in X:f(x)=0\}$
is called the zero-set of $f$.
 An ideal $I$ of $C(X)$ is called fixed if the set
 $\bigcap_{f\in I} Z(f)$ is non-empty;
 otherwise $I$ is called free.
 The maximal fixed ideals of the ring
 $C (X )$ are precisely the sets
 $M_p=\{f\in C(X): f(p)=0\}, $ 
 for every $p\in X$. 
 The Gelfand-Kolmogoroff theorem generalizes this assertion to the
case of arbitrary maximal ideals of $C(X)$ as follows:
(Gelfand-Kolmogoroff). A subset $M$ of $C(X)$
is a maximal ideal of $C(X)$ if and only if there is a unique point
$p\in \beta X$ such that $M$ coincides with the set
 $\{f\in C(X):  p\in \cl_{\beta X} Z(f)\} $ (see \cite{GelfandKolmogoroff1939, GillmanHenriksenJerison1954})

\noindent    (3). Again from \cite[Theorem 3.9]{Gillman1979} we recall that two compact spaces $X$ and $Y$
 are  homeomorphic if and only if their rings
 $C(X)$ and $C(Y)$ are isomorphic.
 This  is due to Gelfand and Kolmogoroff
 (see \cite{GelfandKolmogoroff1939}).

  Notes measurable spaces have been studied
by many authors.
I thought would be of interest to others.
 The present paper is devoted to placing
 these results in a measurable space context.
 We study the three above-mentioned subjects for
 the ring   of all real measurable functions on
 a measurable space.
 The paper is organized as follows.

 Section  \ref{2} of this paper is a prerequisite for the rest of the paper.
The definitions and results of this section are taken from
\cite{Rudin1987}.

In  Section \ref{3},
the notion   of  fixed ideal in  $Z_{\mathscr{A}}$-ideal in
the ring   of all real measurable functions on
 a measurable space are  introduced  and we show that for every
  measurable space $(X, \mathscr{A})$ and every ideal $I$ of $M(X)$,
   $I$ is a $z$-ideal \`a la Mason of $ M(X)$
if and only if  $I$ is a $Z_{\mathscr{A}}$-ideal of $ M(X)$
 if and only if   $I$ is a $Z^{\circ}$-ideal of $ M(X)$ (see Proposition  \ref{M65}). Also, we prove that
 every  ideal in $ M(X)$ is a $z$-ideal  (see Proposition  \ref{M66}).

 In  Section \ref{4}, we show that,
 a subset $M$ of $ M(X)$
is a maximal ideal if and only if  there exists a unique
$J \in \Sigma Id(\mathscr{A})$ such that $M= M^J$,
where $M^J=\{f\in M(X):  \coz(f)\in J\}$
(see Proposition  \ref{max}).
Next,  we  study  the relations between
maximal free ideals  of $ M(X,\mathscr{A})$ and
 prime ideals of $\mathscr{A}$ (see Proposition  \ref{M45-1}).
 Also, we prove that
 for every subset $M$ of   $M(X, \mathscr{A})$,
  $M$ is a fixed maximal ideal of $M(X)$ with
  $\bigcup_{f\in M}  \coz(f)\in \mathscr{A}$
  if and only if there exists a prime element $P$ of
  $\mathscr{A}$ such that   $M=M_P$,
  where $M_P=\{f\in M(X):  \coz(f)\subseteq P\}$
  (see Proposition  \ref{fixmax}).
 Finally, we show that a compact measurable $(X, \mathscr{A})$
  is determined by  fixed maximal ideals of $M(X)$
   (see Proposition  \ref{M95}).

 In  Section \ref{5},
the notion   of  $T$-measurable space
is  introduced  and we show that
for every measurable space $(X,\mathscr{A})$, there
is a $T$-measurable space
 $(Y,\mathscr{A}^{\prime})$
 and a onto function  $\theta:X\rightarrow Y$
  such that
   $\eta: M(Y)\rightarrow M(X)$
 given by  $g\mapsto g\circ\theta$
 is an isomorphism (see Proposition \ref{M290}).
  In Proposition \ref{M105}, we give an algebraic characterization
 $T$-measurable spaces in terms of maximal ideals and this implies that
  if  $(X, \mathscr{A})$ is a $T$-measurable space
then for every element $P$ of   $\mathscr{A}$,
  $P$ is a prime element of
  $\mathscr{A}$   if and only if \, $|X\setminus P|=1$.
(see Corollary  \ref{X-P=1}).
Also, we show that a measurable space $(X, \mathscr{A})$ is a  $T$-measurable space
if and only if
for every prime ideal $P$ in $ M(X)$, $|\bigcap_{f\in P}Z(f)|\leq 1$
  (see Proposition \ref{M120}).

 In  Section \ref{6},
 we prove that for every compact measurable space $(X,\mathscr{A})$ there
is a   compact $T$-measurable  space
 $(Y,\mathscr{A}^{\prime})$
  such that  $M(X)\cong  M(Y) $
  as rings (see Corollary  \ref{M295}).
  Also, for every   compact $T$-measurable  space $ (X, \mathscr{A})$, we show that
  $X\cong \max(M(X))$  as measure spaces,
 also, if $(X, \mathscr{A})$ and  $(Y, \mathfrak{M^{\prime}})$ are two compact
$T$-measurable spaces, then  $X\cong Y$ as measurable spaces if and only if
 $M(X)\cong  M (Y)$ as rings (see
Proposition \ref{M215} and
Corollary  \ref{M220}).
\section{Preliminaries} \label{2}
Here, we recall some definitions and results from the literature on measurable spaces
and partially ordered sets. For
further information see \cite{Rudin1987} on measurable-theoretic concepts and \cite{EbrahimiMahmoudi1996, PicadoPultr2012} on lattice-theoretic concepts.

\subsection{measurable spaces} 

Let $X$ be a nonempty set. 
Let us   recall some general notation from \cite{Rudin1987}.
 A collection $\mathscr{A}$ of subsets of a set $X$
is said to be a {\it $\sigma$-algebra} in $X$
if $\mathscr{A}$ has the following three properties:
\begin{enumerate}
\item[{\rm (i)}]  $X\in \mathscr{A}$.
\item[{\rm (ii)}]  If $A \in \mathscr{A}$, then $A^c \in \mathscr{A}$, where $A^c$ is the complement of $A$ relative to $X$.
\item[{\rm (iii)}]  If $\{A_n\}_{n\in\mathbb N} \subseteq  \mathscr{A}$, then $\bigcup_{n\in\mathbb N} A_n \in \mathscr{A}$.
\end{enumerate}
 If $\mathscr{A}$ is a $\sigma$-algebra in $X$,
 then $X$  or, for clarity  $(X,  \mathscr{A} )$  is called a  {\it measurable space}, and
the members of $\mathscr{A}$ are called
the measurable sets in $X$.
If $X$ is a measurable space, $Y$ is a topological space,
 and $f$ is a mapping of $X$ into $Y$,
then $f$ is said to be  {\it measurable} provided
that $f^{-1}(V)$ is a measurable set in $X$
for every open set $V$ in $Y$.
If $X$ is a measurable space, then
the set of all measurable maps from $X$ into $\mathbb R$ is denoted $M(X)$,
and the members of $M(X)$ are called the  {\it real measurable functions} on $X$,
where $\mathbb R$ denotes the set of all real numbers with the ordinary topology.

We recall from \cite[1.10 Theorem]{Rudin1987} that
  if $\mathcal{A}$ is any collection of subsets
  of $X$, there exists a smallest
$\sigma$-algebra $\mathscr{A}^*$ in $ X$
such that $\mathcal{A}\subseteq \mathscr{A}^*$.
This $\mathscr{A}^*$ is  called the  {\it $\sigma$-algebra generated by  $\mathcal{A}$}  and
it is denoted by $< \mathcal{A} >$.
 Since  the intersection of any  family of $\sigma$-algebras in $X$ is a $\sigma$-algebra in $X$,
 we conclude that
   $<\mathcal{A}>$ is  intersection of
 the family of all $\sigma$-algebras $\mathscr{A}$ in $X$ which contain $\mathcal{A}$.
 Hence  $<\mathcal{A}>=<\mathcal{A}^{c}>=<\mathcal{A}\cup
  \mathcal{A}^{c}>$,
  where $\mathcal{A}^{c}:=\{A^{c}:
 A \in \mathcal{A}\}$.
 Also, if  $\mathcal{A}, \mathcal{B}\subseteq \mathcal{P}(X)$ with $\mathcal{A}\subseteq
\mathcal{B}$ , then $<\mathcal{A}>\subseteq<\mathcal{B}>$.

The set $ M(X)$ of all real measurable functions on a measurable
space $(X, \mathscr{A})$ will be provided with an algebraic structure and an
order structure.
Since their definitions do not involve measurity, we begin by imposing
these structures on the collection $\mathbb{R}^X$ of all functions from $X$ into
the set $\mathbb R$ of real numbers. Addition,  multiplication, joint, and meet in $\mathbb{R}^X$ are defined by
the formulas
$(f + g)(x) = f(x) + g(x)$,
 $(fg)(x) = f(x)g(x)$,
 $(f\vee g)(x)=\max\{f(x) , g(x)\}$,
 and $(f\wedge g)(x)=\min\{f(x) , g(x)\}$.
It is obvious that
$(\mathbb{R}^X; +,.,\vee, \wedge)$ is an $f$-ring,
this conclusion is immediate consequence of
the corresponding statements about the field $\mathbb R$.
Also,
$(M(X); +,.,\vee, \wedge)$ is an sub-$f$-ring of $\mathbb{R}^X$.

\subsection{partially ordered sets}
Let us   recall some general notation from \cite{EbrahimiMahmoudi1996, PicadoPultr2012}. 
A poset $L$ is a {\it lattice} if and only if  for every $a$ and  $b$ in $L$ both $\sup\{a, b\}$ and
 $\inf\{a, b\}$ exist (in $L$).
 For subset $X$ of a poset  $ L$ and $x\in L$ we write:
\begin{enumerate}
\item $\downarrow X=\{y\in L: y\leq x \mbox{ for some } x\in X\}$.
\item $\uparrow X=\{y\in L: y\geq x \mbox{ for some } x\in X\}$.
\item $\downarrow x=\downarrow \{x\}$.
\item$\uparrow x=\uparrow \{x\}$.
\end{enumerate}
 A nonempty subset $J$ of a   lattice   $L$ is called an  {\it ideal} of $L$ if
 $x \vee y \in J$ and  $\downarrow x\subseteq J$, for all $x, y \in J$.
  A nonempty subset $F$ of a   lattice    $L$ is called a  {\it filter} of $L$ if  $x \wedge y \in F$ and $\uparrow x \subseteq F$, for all $x, y \in F$.
  We say an element $a$ of a   lattice    $L$ is a
 {\it top element}  ({\it bottom element}) of $L$
 if $x \leq a$ ($a\leq x$), for all $x \in X$.
 We denote the top element and the bottom element of  a   lattice   $L$ by $\top$ and $\bot$ respectively.
 A lattice  $L$ is said to be {\it bounded} if there exist a top  element   and a bottom element  in the lattice.
 An element $a$ of a bounded lattice $L $ is said to be {\it compact}
if $a=\bigvee S$, $S\subseteq L$, implies $a=\bigvee T$ for some finite subset
$T$ of $ S$. A  bounded lattice  $L$ is said to be {\it compact} whenever its  top element
$\top$ is compact.
A lattice $L$ is said to be distributive lattice if the binary operations
$\vee$ and $\wedge$
holds distributive property, i.e.; for any $x, y , z\in L$,
$x\vee(y\wedge z)=(x\vee y)\wedge (x\vee z)$
and
$x\wedge(y \vee z)=(x\wedge y)\vee (x\wedge z)$.
An element $p$ of a bounded  distributive lattice $L$ is said
 to be {\it prime} if $p<\top$ and $a\wedge
b\leq p$ implies $a\leq p$ or $b\leq p$.
An element $m$  of a bounded lattice $ L$ is
said to be {\it maximal} (or {\it dual atom})  if $m<\top$ and $m\leq x\leq \top$ implies $m=x$
or $x=\top$.
As it is well known, every maximal element
in  a bounded  distributive lattice  is prime.
We write $\Sigma L$
and $Max(L)$ for the set of all prime elements and
maximal elements  of $L$ ,  respectively.
A $\sigma$-frame is a
lattice $L$ with countable joins $\bigvee_n$, finite meets $\wedge$, top $\top$, bottom $\bot$ and satisfying
$x\wedge \bigvee_n x_n = \bigvee_n x\wedge x_n$, for $n\in J$, a countable index set, $x,x_n\in L$.
 A {\it frame} 
  is a complete lattice $L$
in which the distributive law
 $ x \wedge \bigvee S = \bigvee_{s \in S }(x \wedge s)$
holds for all  $ x \in L$ and $ S \subseteq L$.
 The frame of open subsets of a topological space $X$ is denoted by $\mathfrak O X$‎.

If $\mathscr{A}$ is a $\sigma$-algebra in $X$, then the following statements hold.
\begin{enumerate}
\item[{\rm (1)}] $( \mathscr{A},\subseteq)$ is a Boolean algebra.
\item[{\rm (2)}]   $( \mathscr{A},\subseteq)$ is a $\sigma$-frame.
\end{enumerate}
\section{Filters in $\sigma$-algebras}  \label{3}
  Consider $f\in \mathbb{R}^X$,
the set $f^{-1}(0)$ will be called the zero-set of $f$. We shall find it convenient
to denote this set by $Z(f)$, or, for clarity, by $Z_X(f)$:
$$
Z(f) = Z_X(f) = \{x \in X:f(x) = 0\} .
$$
Any set that is a zero-set of some function in $\mathbb{R}^X$ is called a zero-set
in $X$.   For any subset $A$ from  $  M(X)$, we write  $Z_{\mathscr{A}}[A]=\{Z(f):f\in A\}$
and we put $Z_{\mathscr{A}}[X]=Z_{\mathscr{A}}[  M(X)]$.

For every $f, g\in  \mathbb{R}^X$,  we have
\begin{enumerate}
\item[{\rm (1)}]  $Z(f)=Z(|f|)=Z(f^n)$,  every $n\in\mathbb N$.
\item[{\rm (2)}]  $Z(fg)=Z(f)\cup Z(g)$.
\item[{\rm (3)}]  $Z(f^2+g^2)=Z(|f|+|g|)=Z(f)\cap Z(g)$.
\end{enumerate}


\begin{remark} \label{chi}
Let $(X, \mathscr{A})$ be a measurable space and $A\in \mathscr{A}$,
then the characteristic function $‎‎\chi‎_{_A}:X\rightarrow \mathbb R$
is a real measurable function on $X$ with $Z(\chi‎_{_A})=A^c$.
\end{remark}
 \begin{proposition}
   \label{M15}
If $(X, \mathscr{A})$ is a measurable
space, then $Z_{\mathscr{A}}[X]= \mathscr{A}$.
\end{proposition}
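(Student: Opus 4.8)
The plan is to prove the set equality $Z_{\mathscr{A}}[X]=\mathscr{A}$ by establishing the two inclusions separately; both are short.

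For the inclusion $Z_{\mathscr{A}}[X]\subseteq\mathscr{A}$, I would take an arbitrary $f\in M(X)$ and show $Z(f)\in\mathscr{A}$. Since $f$ is measurable and $\mathbb{R}\setminus\{0\}$ is open in $\mathbb{R}$, the set $f^{-1}(\mathbb{R}\setminus\{0\})$ belongs to $\mathscr{A}$; because $\mathscr{A}$ is a $\sigma$-algebra, it is closed under complementation, so $Z(f)=f^{-1}(\{0\})=X\setminus f^{-1}(\mathbb{R}\setminus\{0\})\in\mathscr{A}$. (Equivalently, one may write $\{0\}=\bigcap_{n\in\mathbb N}(-\tfrac1n,\tfrac1n)$ and invoke closure under countable intersections.) Hence every zero-set of a member of $M(X)$ is measurable, i.e.\ $Z_{\mathscr{A}}[X]\subseteq\mathscr{A}$.

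For the reverse inclusion $\mathscr{A}\subseteq Z_{\mathscr{A}}[X]$, I would let $A\in\mathscr{A}$. Then $A^{c}\in\mathscr{A}$, so by Remark~\ref{chi} the characteristic function $\chi_{_{A^{c}}}$ is a real measurable function on $X$ and satisfies $Z(\chi_{_{A^{c}}})=(A^{c})^{c}=A$. Therefore $A=Z(\chi_{_{A^{c}}})\in Z_{\mathscr{A}}[X]$. Combining the two inclusions yields $Z_{\mathscr{A}}[X]=\mathscr{A}$.

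There is no genuine obstacle in this argument; the only points requiring a little care are invoking the correct defining property of measurability (preimages of \emph{open} sets are measurable) for the first inclusion, and the complement computation $Z(\chi_{_{A^{c}}})=A$ supplied by Remark~\ref{chi} for the second.
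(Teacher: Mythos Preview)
Your proof is correct and follows essentially the same approach as the paper's: both establish the two inclusions separately, invoking Remark~\ref{chi} for $\mathscr{A}\subseteq Z_{\mathscr{A}}[X]$ and the measurability of $f$ for the reverse. The only cosmetic difference is in the first inclusion: the paper writes $Z(f)=f^{-1}[0,+\infty)\cap f^{-1}(-\infty,0]$, whereas you take the complement of $f^{-1}(\mathbb{R}\setminus\{0\})$; both are immediate from the definition of measurability.
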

\begin{proof}
 Let $f$  be a real measurable functions on the measurable
space $(X, \mathscr{A})$, then
$Z(f)=f^{-1}[0, +\infty)\cap f^{-1}(-\infty, 0]\in \mathscr{A}$.
Therefore,   $Z_{\mathscr{A}}[X]\subseteq  \mathscr{A}$. By Remark \ref{chi},
$ \mathscr{A}\subseteq Z_{\mathscr{A}}[X]$ and so $Z_{\mathscr{A}}[X]= \mathscr{A}$.
\end{proof}

Let $(X, \mathscr{A})$ be a measurable space.
Then we have
 $\bigcap_{n\in \mathbb N}Z(f_n)\in Z_{\mathscr{A}}[X]$  and
 $\bigcup_{n\in \mathbb N}Z(f_n)\in Z_{\mathscr{A}}[X]$,
for every  $\{f_n\}_{n\in \mathbb N}\subseteq  M(X)$.
 Therefore,  $Z_{\mathscr{A}}[X]$ is a $\sigma$-frame.

\begin{proposition} \label{unit}
 Let $f$  be a real measurable functions on a measurable
space $(X, \mathscr{A})$.
 The element $f$ is a unit element of $ M(X)$ if and only if $Z(f) = \emptyset$.
\end{proposition}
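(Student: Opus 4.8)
The plan is to argue both implications directly from the definition of a unit, noting first that the multiplicative identity of $M(X)$ is the constant function $1$, which belongs to $M(X)$ since constant functions are measurable.

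For the ``only if'' part, suppose $f$ is a unit of $M(X)$ and pick $g\in M(X)$ with $fg=1$. Evaluating at any $x\in X$ gives $f(x)g(x)=1$, so $f(x)\neq 0$; as $x$ is arbitrary, $Z(f)=\emptyset$. No measurability input is needed in this direction.

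For the ``if'' part, assume $Z(f)=\emptyset$, so that $f(x)\neq 0$ for all $x\in X$, and define $g\colon X\to\mathbb R$ by $g(x)=1/f(x)$; then $fg=1$ by construction, and the only thing left to verify is that $g\in M(X)$, i.e.\ that $g$ is measurable. The key point is that, since $f$ does not vanish, it may be viewed as a map $f\colon X\to U$ with $U=\mathbb R\setminus\{0\}$; because $U$ is open in $\mathbb R$, its open subsets are precisely the open subsets of $\mathbb R$ contained in $U$, so measurability of $f\colon X\to\mathbb R$ yields measurability of $f\colon X\to U$. Composing with the continuous map $\rho\colon U\to\mathbb R$, $\rho(t)=1/t$, and using that a continuous function of a measurable function is measurable, we conclude that $g=\rho\circ f$ is measurable; hence $g\in M(X)$ and $f$ is a unit of $M(X)$.

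I do not anticipate any genuine obstacle here; the only delicate point is the measurability of the pointwise reciprocal, which the composition-with-$\rho$ argument disposes of. Alternatively, one could invoke the von Neumann regularity of $M(X)$ recalled in the Introduction: choosing $h\in M(X)$ with $f^2h=f$, the identity $f(fh-1)=0$ together with $Z(f)=\emptyset$ forces $fh=1$, so $f$ is again a unit.
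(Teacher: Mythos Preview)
Your argument is correct. The necessity direction matches the paper's reasoning (the paper phrases it as $Z(f)\cup Z(g)=Z(fg)=Z(\mathbf 1)=\emptyset$, which is your pointwise evaluation recast in zero-set language).

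For sufficiency the paper takes a more bare-hands route than you do: it defines the same $g(x)=1/f(x)$ but establishes measurability by writing out $g^{-1}(r,+\infty)$ explicitly as
\[
\bigl(\{x:rf(x)<1\}\cap f^{-1}(0,+\infty)\bigr)\cup\bigl(\{x:rf(x)>1\}\cap f^{-1}(-\infty,0)\bigr),
\]
and then invoking that $\mathbf r f\in M(X)$. Your composition argument (factor through $U=\mathbb R\setminus\{0\}$ and apply the continuous $\rho(t)=1/t$) is cleaner and more conceptual, relying on the standard ``continuous $\circ$ measurable is measurable'' lemma rather than a case split on the sign of $f$. Your alternative via von Neumann regularity is also valid and pleasantly short; the paper does not use it here, though it cites Hager's regularity result in the Introduction. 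Either of your routes is at least as efficient as the paper's.
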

\begin{proof}
\textit{Necessity.}
By hypothesis, there is a $g\in  M(X)$ such that $fg=1$, then
$Z(f)\cup Z(g)=Z(fg)=Z(1)=\emptyset$, which follows that $Z(f) = \emptyset$.

\textit{Sufficiency.}
We define $g:X\rightarrow \mathbb R$ given by $g(x)=\frac{1}{f(x)}$, for every $x\in X$.
Since $Z(f) = \emptyset$, we conclude that $g\in  \mathbb{R}^X$. We have
$$
g^{-1}(r, +\infty)
=\big(\{x\in X: rf(x)<1\}\cap f^{-1}(0, +\infty)
\big)
\cup
\big(\{  x\in X: rf(x)>1\}\cap f^{-1}(-\infty,0)
\big),
$$
for every $r\in\mathbb R$. Since ${\bf r}f\in  M(X)$,
we infer that $g^{-1}(r, +\infty)\in \mathscr{A}$,
for every $r\in\mathbb R$.
Therefore, $g\in   M(X)$ and $fg=\mathbf{1}$.
Hence  $f$ is a unit element of $ M(X)$.
\end{proof}



\begin{definition}  \label{M25}
Let $(X, \mathscr{A})$ be a measurable space.
A proper filter of $\mathscr{A}$ is called a   
{\it $Z_{\mathscr{A}}$-filter} on $X$.
\end{definition}
 In the following proposition, we study
relations between proper ideals and    $Z_{\mathscr{A}}$-filters.
\begin{proposition}  \label{M30}
Let $(X, \mathscr{A})$ be a measurable space.
In $ M(X)$, the following statements hold.
\begin{enumerate}
\item[{\rm (1)}]  If $I$ is a proper ideal in $ M(X)$, then the family
$Z[I] = \{Z(f)\mid f \in I\}$
is a $Z_{\mathscr{A}}$-filter on $X$.

\item[{\rm (2)}]  If $  \mathcal{F}$ is a $Z_{\mathscr{A}}$-filter on $X$, then the family
 $Z^{-1}[\mathcal{F}] = \{f\mid Z(f) \in \mathcal{F}\}$
is  a proper  ideal in $ M(X)$.
\end{enumerate}
\end{proposition}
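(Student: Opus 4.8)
The plan is to check, in part (1), that $Z[I]$ satisfies the defining properties of a proper filter of the lattice $(\mathscr{A},\subseteq)$, and, in part (2), that $Z^{-1}[\mathcal{F}]$ satisfies the defining properties of a proper ring ideal of $M(X)$. Throughout I would use the zero-set identities $Z(fg)=Z(f)\cup Z(g)$ and $Z(f^{2}+g^{2})=Z(f)\cap Z(g)$ recorded just before Remark~\ref{chi}, together with Proposition~\ref{M15} (every member of $\mathscr{A}$ is a zero-set of some $f\in M(X)$), Remark~\ref{chi} (the characteristic function $\chi_{A^{c}}$ lies in $M(X)$ with $Z(\chi_{A^{c}})=A$), and Proposition~\ref{unit} (a unit is exactly a function with empty zero-set).

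For (1): first, $Z[I]$ is nonempty and lies inside $\mathscr{A}$, since $0\in I$ gives $X=Z(0)\in Z[I]$ and each $Z(f)\in\mathscr{A}$ by Proposition~\ref{M15}. For closure under finite meets, if $f,g\in I$ then $f^{2}+g^{2}\in I$ because $I$ is a ring ideal, and $Z(f)\cap Z(g)=Z(f^{2}+g^{2})\in Z[I]$. For upward closure, suppose $f\in I$ and $A\in\mathscr{A}$ with $Z(f)\subseteq A$; then $\chi_{A^{c}}\in M(X)$ and $f\chi_{A^{c}}\in I$, while $Z(f\chi_{A^{c}})=Z(f)\cup Z(\chi_{A^{c}})=Z(f)\cup A=A$, so $A\in Z[I]$. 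Finally $Z[I]$ is proper: if $\emptyset\in Z[I]$, then $Z(f)=\emptyset$ for some $f\in I$, so $f$ is a unit by Proposition~\ref{unit} and $I=M(X)$, a contradiction. Hence $Z[I]$ is a proper filter of $\mathscr{A}$, i.e.\ a $Z_{\mathscr{A}}$-filter.

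For (2): the set $Z^{-1}[\mathcal{F}]$ is nonempty because $\mathcal{F}$, being a nonempty upward-closed subset of $\mathscr{A}$, contains $X=Z(0)$. If $f,g\in Z^{-1}[\mathcal{F}]$, then $Z(f)\cap Z(g)\in\mathcal{F}$, and since $Z(f)\cap Z(g)\subseteq Z(f+g)\in\mathscr{A}$, upward closure of $\mathcal{F}$ gives $Z(f+g)\in\mathcal{F}$, so $f+g\in Z^{-1}[\mathcal{F}]$. If $f\in Z^{-1}[\mathcal{F}]$ and $h\in M(X)$, then $Z(f)\subseteq Z(f)\cup Z(h)=Z(fh)\in\mathscr{A}$, so again $Z(fh)\in\mathcal{F}$ and $fh\in Z^{-1}[\mathcal{F}]$; thus $Z^{-1}[\mathcal{F}]$ is a ring ideal. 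It is proper since $Z(1)=\emptyset\notin\mathcal{F}$ by properness of $\mathcal{F}$, so $1\notin Z^{-1}[\mathcal{F}]$.

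The only step that needs genuine care is the upward-closure clause of part (1): one must realize an arbitrary $A\in\mathscr{A}$ lying above $Z(f)$ as the zero-set of an element of $I$, and the device is to multiply $f$ by $\chi_{A^{c}}$, which both keeps us inside $I$ (absorption) and replaces the zero-set $Z(f)$ by $Z(f)\cup A=A$. Everything else is a mechanical consequence of the zero-set identities and of Propositions~\ref{M15} and~\ref{unit}.
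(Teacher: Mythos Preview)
Your proof is correct and follows essentially the same approach as the paper's: in (1) you use $f^{2}+g^{2}$ for meet-closure, a product for upward closure, and Proposition~\ref{unit} for properness, and in (2) you use the inclusion $Z(f)\cap Z(g)\subseteq Z(f\pm g)$ together with upward closure of $\mathcal{F}$ and absorption. The only cosmetic differences are that for upward closure the paper takes an arbitrary $h\in M(X)$ with $Z(f)\subseteq Z(h)$ (implicitly invoking Proposition~\ref{M15}) where you make the explicit choice $h=\chi_{A^{c}}$, and in (2) the paper checks $f-g$ while you check $f+g$; neither difference is substantive.
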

\begin{proof}
 (1).  Consider $f,g\in I$ and $h\in  M(X)$.
By hypothesis, $f^2 + g^2, fh\in I$, then
$Z(f)\cap Z(g)=Z(f^2+g^2)\in Z[I]$ and if $Z(f)\subseteq Z(h)$, then $Z(h)=Z(f)\cup Z(h)=Z(fh)\in Z[I]$. Also, since $I$ contains no unit, we conclude from  Proposition \ref{unit}  that $\emptyset \notin Z[I]$.

(2).  Let $J = Z^{-1}[\mathcal{F}]$. By Definition $Z_{\mathscr{A}}$-filter and Proposition \ref{unit}, $J$ contains no unit. Let $f, g \in J$,
and let $h \in  M(X)$. Then
\[
Z(f - g)
= Z(f +(- g))
\geq Z(f)\cap Z(- g))
\geq Z(f)\cap Z(g)\in \mathcal{F}
\]
and hence $Z(f - g)\in \mathcal{F}$, by Definition $Z_{\mathscr{A}}$-filter. Therefore $f - g\in Z^{-1}[\mathcal{F}]$. Moreover,
 $$Z(hf)= Z(h)\cup Z(h)\supseteq Z(f)\in \mathcal{F},$$
and hence $Z(fh)\in \mathcal{F}$, by Definition $Z_{\mathscr{A}}$-filter. Therefore $fh\in Z^{-1}[\mathcal{F}]$.
This completes
the proof that $J$ is a proper ideal in $ M(X)$.
\end{proof}
Let $(X, \mathscr{A})$ be a measurable space.
A $Z_{\mathscr{A}}$-filter $\mathcal F$ on a set $X$ is said to be an {\it $Z_{\mathscr{A}}$-ultrafilter} if it is maximal (with respect to inclusion) in the family of all $Z_{\mathscr{A}}$-filters on $X$.

In the following proposition, we study
relations between maximal ideals and    $Z_{\mathscr{A}}$-ultrafilters.
\begin{proposition}\label{M35}
Let $(X, \mathscr{A})$ be a measurable space.
In $ M(X)$,  the following statements hold.
\begin{enumerate}
\item[{\rm (1)}]  If $M$ is a maximal ideal in $ M(X)$, then $Z[M]$ is a $Z_{\mathscr{A}}$-ultrafilter on $X$.
\item[{\rm (2)}]  If $ \mathcal F$ is a $Z_{\mathscr{A}}$-ultrafilter on $X$, then $Z^{-1}[\mathcal F]$ is a maximal ideal in $ M(X)$.

The mapping $Z$ is one-one from the set of all maximal ideals in $ M(X)$ onto
the set of all $Z_{\mathscr{A}}$-ultrafilters on $X$.
\end{enumerate}
\end{proposition}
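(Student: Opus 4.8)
The plan is to adapt the classical correspondence between maximal ideals of $C(X)$ and $z$-ultrafilters to the present setting. Everything rests on two facts already available: Proposition \ref{M30}, by which $Z$ sends proper ideals to $Z_{\mathscr{A}}$-filters and $Z^{-1}$ sends $Z_{\mathscr{A}}$-filters to proper ideals, and Proposition \ref{M15}, by which $\mathscr{A}=Z_{\mathscr{A}}[X]$, i.e.\ every measurable set is a zero-set. From the latter I would first record, for an arbitrary $Z_{\mathscr{A}}$-filter $\mathcal F$, the identity $Z[Z^{-1}[\mathcal F]]=\mathcal F$: the inclusion $\subseteq$ is trivial, and for $\supseteq$, any $A\in\mathcal F\subseteq\mathscr{A}=Z_{\mathscr{A}}[X]$ equals $Z(f)$ for some $f\in M(X)$, so $f\in Z^{-1}[\mathcal F]$ and $A=Z(f)\in Z[Z^{-1}[\mathcal F]]$. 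Dually, for a maximal ideal $M$ I would record $Z^{-1}[Z[M]]=M$: here $Z[M]$ is a $Z_{\mathscr{A}}$-filter by Proposition \ref{M30}(1), hence $Z^{-1}[Z[M]]$ is a proper ideal by Proposition \ref{M30}(2) that plainly contains $M$, so maximality of $M$ gives equality.

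Granting these, part (1) goes as follows. Since a maximal ideal $M$ is proper, $Z[M]$ is a $Z_{\mathscr{A}}$-filter. If $\mathcal F$ is any $Z_{\mathscr{A}}$-filter with $Z[M]\subseteq\mathcal F$, then $Z^{-1}[\mathcal F]$ is a proper ideal containing $Z^{-1}[Z[M]]=M$, whence $Z^{-1}[\mathcal F]=M$ by maximality; applying $Z$ and the first identity yields $\mathcal F=Z[Z^{-1}[\mathcal F]]=Z[M]$. Thus no $Z_{\mathscr{A}}$-filter properly contains $Z[M]$, i.e.\ $Z[M]$ is a $Z_{\mathscr{A}}$-ultrafilter. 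For part (2), let $\mathcal F$ be a $Z_{\mathscr{A}}$-ultrafilter. By Proposition \ref{M30}(2), $Z^{-1}[\mathcal F]$ is a proper ideal, so (as $M(X)$ is a commutative ring with identity) it is contained in some maximal ideal $M$ of $M(X)$. Then $Z[M]\supseteq Z[Z^{-1}[\mathcal F]]=\mathcal F$, and $Z[M]$ is a $Z_{\mathscr{A}}$-filter by Proposition \ref{M30}(1), so maximality of $\mathcal F$ forces $Z[M]=\mathcal F$; applying $Z^{-1}$ gives $M=Z^{-1}[Z[M]]=Z^{-1}[\mathcal F]$, so $Z^{-1}[\mathcal F]$ is itself maximal.

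For the last assertion, surjectivity of $Z$ on the set of maximal ideals is immediate: given a $Z_{\mathscr{A}}$-ultrafilter $\mathcal F$, the ideal $Z^{-1}[\mathcal F]$ is maximal by (2) and $Z[Z^{-1}[\mathcal F]]=\mathcal F$. Injectivity follows from the identity $Z^{-1}[Z[M]]=M$ for maximal $M$: if $M_1$ and $M_2$ are maximal ideals with $Z[M_1]=Z[M_2]$, then $M_1=Z^{-1}[Z[M_1]]=Z^{-1}[Z[M_2]]=M_2$.

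The only step that is not pure formal manipulation of $Z$ and $Z^{-1}$ under maximality is the identity $Z[Z^{-1}[\mathcal F]]=\mathcal F$, and that is precisely where the measurable structure enters, via Proposition \ref{M15}: in a measurable space every member of a $Z_{\mathscr{A}}$-filter is genuinely a zero-set, which is what makes $Z$ and $Z^{-1}$ mutually inverse on filters. This is in fact simpler than the $C(X)$ situation, where the corresponding step requires more care because not every relevant set need be a zero-set.
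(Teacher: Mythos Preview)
Your argument is correct and follows essentially the same route as the paper's proof, which is a one-liner: ``Since $Z_{\mathscr{A}}$ and $Z_{\mathscr{A}}^{-1}$ preserve inclusion, the result follows at once from Proposition~\ref{M30}.'' You have simply unpacked that sentence, and in doing so you make explicit the role of Proposition~\ref{M15} in establishing $Z[Z^{-1}[\mathcal F]]=\mathcal F$, a step the paper leaves implicit but which is indeed needed to turn the order-preservation of $Z$ and $Z^{-1}$ into a genuine bijection at the level of maximal objects.
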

\begin{proof}
Since $ Z_{\mathscr{A}} $ and $Z_{\mathscr{A}}^{-1}$ preserve inclusion, the result follows at once from Proposition \ref{M30}.
\end{proof}
Let $ A\in   \mathscr{A}$ and $ \mathcal{F}  \subseteq\mathscr{A}$, we say  $ A $ meets $ \mathcal{F} $
if and only if  $ A\cap B\neq \emptyset$, for all $ B\in\mathcal{F} $.
It is evident that
\begin{enumerate}
\item A   $Z_{\mathscr{A}}$-filter $ \mathcal F $  on $X$ is a $Z_{\mathscr{A}}$-ultrafilter if and only if
 $ A $ meets $ \mathcal F $ implies  $ A\in \mathcal F $, for every $ A\in  \mathscr{A}$.
\item
 If $ \mathcal F $ and $ \mathcal G $ are disjoint $Z_{\mathscr{A}}$-ultrafilter  on $X$,
 then there is elements $ A\in\mathcal F $ and $B\in \mathcal G $ such that $A\cap B=\emptyset $.
\item If $\{ \mathcal F_i\}_{i \in  I }$ is a nonempty collection of $Z_{\mathscr{A}}$-filters   on $X$, then  $ \bigcap_{i \in  I}\mathcal F_i $ is a $Z_{\mathscr{A}}$-filter   on $X$.
\item     Every $Z_{\mathscr{A}}$-filter  on $X$ is contained in  a $Z_{\mathscr{A}}$-ultrafilter  on $X$ .
\end{enumerate}
\begin{proposition}\label{M40}
 Let $M$ be a maximal ideal in $  M(X)$.
 If $Z(f)$ meets every member of $Z[M]$, then $f \in M$.
\end{proposition}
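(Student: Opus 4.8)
The plan is to argue by contradiction and use the defining feature of a maximal ideal in a commutative ring with identity, namely that it is absorbing in the strongest sense. First I would suppose $f\notin M$. Then the ideal $M+(f)$ properly contains $M$ (it contains $f$), so by maximality of $M$ and the presence of the identity $\mathbf 1$ in $M(X)$ we get $M+(f)=M(X)$; in particular there exist $h\in M$ and $g\in M(X)$ with $\mathbf 1=h+gf$.

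Next I would pass to zero-sets and extract a contradiction with the hypothesis. From $h+gf=\mathbf 1$ and the elementary inclusion $Z(a)\cap Z(b)\subseteq Z(a+b)$ one gets $Z(h)\cap Z(gf)\subseteq Z(\mathbf 1)=\emptyset$; since $Z(gf)=Z(g)\cup Z(f)\supseteq Z(f)$ by property (2) of zero-sets, this forces $Z(h)\cap Z(f)=\emptyset$. But $h\in M$, so $Z(h)\in Z[M]$, and the hypothesis that $Z(f)$ meets every member of $Z[M]$ yields $Z(h)\cap Z(f)\neq\emptyset$ — a contradiction. Hence $f\in M$. This route uses nothing beyond the zero-set identities of Section \ref{3} and the ring-theoretic meaning of maximality, so it is entirely self-contained.

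There is an alternative, more lattice-flavoured route: since $Z(f)\in\mathscr A$ by Proposition \ref{M15} and $Z(f)$ meets every member of the $Z_{\mathscr A}$-ultrafilter $Z[M]$ (Proposition \ref{M35}), the ultrafilter characterization recorded just before the statement gives $Z(f)=Z(g)$ for some $g\in M$; one then upgrades $g\in M$ to $f\in M$ by invoking the von Neumann regularity of $M(X)$ (pick $g'$ with $g=g^{2}g'$, note that $e:=gg'$ is idempotent with $Z(e)=Z(g)=Z(f)$, and check $f(\mathbf 1-e)=\mathbf 0$, so $f=fe\in(g)\subseteq M$). I expect the only genuinely delicate point — in either route — to be the temptation to conclude $f\in M$ \emph{directly} from $Z(f)\in Z[M]$: that implication is precisely what fails for $C(X)$, and here it must be powered either by the regularity of $M(X)$ or, as in the first route, by a bona fide manipulation with the identity rather than a purely order-theoretic one. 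For the write-up I would adopt the first route and mention the second only as a remark.
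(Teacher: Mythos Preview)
Your first route is correct and genuinely different from the paper's argument. The paper follows essentially your second, lattice-flavoured route: it invokes Proposition~\ref{M35} to see that $Z[M]$ is a $Z_{\mathscr A}$-ultrafilter, applies the ultrafilter characterization recorded just before the statement to conclude $Z(f)\in Z[M]$, and then upgrades to $f\in M$. However, the upgrade is done more simply than you propose. Rather than appealing to von Neumann regularity, the paper just observes that $Z^{-1}[Z[M]]$ is a proper ideal (Proposition~\ref{M30}) containing $M$, so maximality of $M$ forces $M=Z^{-1}[Z[M]]$, whence $f\in M$. Your first route has the virtue of being entirely self-contained, bypassing the filter correspondence of Propositions~\ref{M30}--\ref{M35}; the paper's route has the virtue of exercising the machinery it has just built. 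One small correction to your closing commentary: the implication ``$Z(f)\in Z[M]\Rightarrow f\in M$'' does \emph{not} fail for maximal ideals of $C(X)$ --- maximal ideals in $C(X)$ are $z$-ideals too --- it fails only for arbitrary ideals, so the delicacy you flag is real but your $C(X)$ illustration is off.
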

\begin{proof}
 The set $Z[M]$ is a $Z_{\mathscr{A}}$-ultrafilter on $X$,
 by Proposition \ref{M35},
 and so, if $Z(f)$ meets every member of $Z[M]$,
 then $ Z(f)\in Z[M] $.
  Therefore  $ f\in Z^{-1}[Z[M]] ${\em ;}
  moreover, $ M\subseteq Z^{-1}[Z[M]] $,
  and $M$ is a maximal ideal, so that $ f\in M=Z^{-1}[Z[M]] $.
\end{proof}
\begin{proposition} \label{M45}
Let $(X, \mathscr{A})$ be a measurable space.
For every $p\in X$,
$$
M_p:=\{f\in   M(X) : f(p)=0\}
$$
is a maximal ideal in $ M(X)$.
\end{proposition}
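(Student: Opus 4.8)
The plan is to realize $M_p$ as the kernel of a surjective ring homomorphism onto a field. Define the evaluation map $\varepsilon_p\colon M(X)\rightarrow \mathbb R$ by $\varepsilon_p(f)=f(p)$. Since addition and multiplication in $M(X)$ are computed pointwise, $\varepsilon_p$ is a ring homomorphism; it sends $\mathbf 1$ to $1$, and for each $r\in\mathbb R$ the constant function $\mathbf r$ belongs to $M(X)$ with $\varepsilon_p(\mathbf r)=r$, so $\varepsilon_p$ is onto. Its kernel is exactly $\{f\in M(X):f(p)=0\}=M_p$. Hence $M(X)/M_p\cong\mathbb R$, a field, and therefore $M_p$ is a maximal ideal of $M(X)$. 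This is the shortest route, and I would present it first.

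Alternatively, one can argue with the $Z_{\mathscr{A}}$-filter machinery already developed. Since $\mathscr{A}$ is a $\sigma$-algebra, the family $\mathcal F_p:=\{A\in\mathscr{A}:p\in A\}$ is a proper filter of $\mathscr{A}$, i.e.\ a $Z_{\mathscr{A}}$-filter on $X$. It is in fact a $Z_{\mathscr{A}}$-ultrafilter: if $A\in\mathscr{A}$ meets every member of $\mathcal F_p$ but $p\notin A$, then $A^c\in\mathcal F_p$ while $A\cap A^c=\emptyset$, a contradiction, so $A\in\mathcal F_p$. By Proposition \ref{M15} every $Z(f)$ lies in $\mathscr{A}$, and $Z(f)\in\mathcal F_p$ if and only if $f(p)=0$; thus $M_p=Z^{-1}[\mathcal F_p]$, which is a maximal ideal by Proposition \ref{M35}(2). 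I would include this as a remark or as the second half of the proof, depending on which tools the paper wants to emphasize downstream.

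The main obstacle is essentially that there is none of substance: the only things requiring a word are that constant functions are measurable (immediate) together with the standard fact that an ideal of a commutative ring with identity is maximal precisely when the quotient is a field; in the filter-based route the one genuine check is that $\mathcal F_p$ is an ultrafilter, which is where closure of $\mathscr{A}$ under complementation is used. I would therefore keep the write-up to a few lines, favouring the evaluation-map argument for clarity.
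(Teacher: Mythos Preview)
Your proof is correct. The paper's argument is a bare-hands variant of your evaluation-map approach: rather than invoking the first isomorphism theorem and the field criterion, it simply observes that $M_p$ is proper (via Proposition~\ref{unit}) and that for any $f\notin M_p$ with $f(p)=r\neq 0$ one has $\mathbf{1}-\frac{1}{r}f\in M_p$, so $\mathbf{1}\in M_p+(f)$. This is exactly the computation underlying the surjectivity of $\varepsilon_p$ via constants, unpacked at the level of cosets; the two presentations are essentially the same idea, with the paper's being slightly more elementary in that it avoids quoting the maximal-ideal-iff-quotient-is-a-field fact. Your alternative route through the $Z_{\mathscr{A}}$-ultrafilter $\mathcal F_p$ and Proposition~\ref{M35}(2) is a genuinely different argument that the paper does not use here; it ties the result more tightly to the filter machinery but is less self-contained.
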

\begin{proof}
By Proposition \ref{unit}, it is clear that $M_p$ is a proper ideal in  $ M(X)$.
Consider $f\in  M(X)\setminus M_p$ with $f(p)=r\not=0$.
From $\mathbf 1-\mathbf{\frac 1r}f\in   M(X)$ and
 $(\mathbf 1-\mathbf{\frac 1r}f)(p)=0$, we infer that
  $\mathbf 1-\mathbf{\frac 1r}f\in M_p$, which follows that $M_p$ is a maximal ideal in $ M(X)$.
\end{proof}
 Recall the notion of a $z$-ideal
of a ring $A$ as was introduced by Mason in \cite{Mason1973}. In lattice theory this notion is known as ``$z$-ideals \`a la Mason''.
Denote by $Max(A)$ the set of all maximal ideals of a ring $A$. For $a \in A$, let
\begin{center}
$ \mathfrak M(a) = \{M \in Max(A) \mid a \in M\}.$
\end{center}
An ideal $I$ of a ring $A$ is called a
  \textbf{$z$-ideal  \`a la Mason}  if whenever $\mathfrak M(a) \subseteq \mathfrak M(b)$
and $a\in I$, then $b\in I$.

\begin{lemma}  \label{M50}
Let $(X, \mathscr{A})$ be a measurable space.
In $ M(X)$,  the following statements are equivalent, for every $f, g\in  M(X)$.
\begin{enumerate}
\item[{\rm (1)}]  $ \mathfrak M(f) \subseteq \mathfrak M(g) $.
\item[{\rm (2)}]  $Z(f)\subseteq Z(g)$.
\item[{\rm (3)}]  $Ann(f)\subseteq Ann(g)$.
\end{enumerate}
\end{lemma}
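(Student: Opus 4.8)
The plan is to prove the cycle $(1)\Rightarrow(2)\Rightarrow(3)\Rightarrow(1)$, using throughout that $M(X)$ is von Neumann regular (Hager's theorem, quoted in the introduction) together with the fact that $Z_{\mathscr{A}}[X]=\mathscr{A}$ (Proposition~\ref{M15}) and the description of units via zero-sets (Proposition~\ref{unit}).

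For $(2)\Rightarrow(3)$: if $Z(f)\subseteq Z(g)$ and $hf=0$, then $Z(f)\cup Z(h)=Z(hf)=Z(0)=X$, so $X\setminus Z(f)\subseteq Z(h)$; combining with $Z(f)\subseteq Z(g)$ gives $X\setminus Z(g)\subseteq X\setminus Z(f)\subseteq Z(h)$, hence $Z(g)\cup Z(h)=X=Z(hg)$, i.e. $hg=0$. Thus $Ann(f)\subseteq Ann(g)$. For $(3)\Rightarrow(1)$: I would show the contrapositive via annihilators. Since $M(X)$ is von Neumann regular, every principal ideal is generated by an idempotent; concretely, for $f\in M(X)$ pick $g$ with $f^2g=f$ and set $e=fg$, an idempotent with the same annihilator as $f$ and with $Z(e)=Z(f)$ (here $e=\chi_{_{\coz f}}$, using Remark~\ref{chi} and Proposition~\ref{M15}, where $\coz f=X\setminus Z(f)\in\mathscr{A}$). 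In a (commutative, reduced) von Neumann regular ring one has $Ann(f)=Ann(e)=(1-e)M(X)$, and $\mathfrak M(f)=\mathfrak M(e)=V(e)$ consists exactly of the maximal ideals containing $1-e$... more convenient: $M\in\mathfrak M(f)$ iff $f\in M$ iff $e\in M$ iff $1-e\notin M$. So $\mathfrak M(f)\subseteq\mathfrak M(g)$ fails exactly when there is a maximal ideal $M$ with $f\in M$, $g\notin M$; equivalently with the corresponding idempotents $e_f\in M$, $e_g\notin M$, i.e. $1-e_g\in M$ while $1-e_f\notin M$. If instead $Ann(f)\subseteq Ann(g)$, then $(1-e_f)M(X)\subseteq(1-e_g)M(X)$, forcing $1-e_g=(1-e_g)(1-e_f)$ hence $e_f\le e_g$ as idempotents (i.e. $e_fe_g=e_f$); then any maximal ideal containing $e_f$... this is the wrong direction, so instead: $Ann(f)\subseteq Ann(g)$ gives $g\cdot Ann(f)=0$, and since $Ann(f)=(1-e_f)M(X)$ we get $g(1-e_f)=0$, so $\coz g\subseteq\coz f$, i.e. $Z(f)\subseteq Z(g)$, which is $(2)$; and then $(2)\Rightarrow(1)$ is immediate because $f\in M$ implies $Z(f)\in Z[M]$, $Z[M]$ is a $Z_{\mathscr{A}}$-filter (Proposition~\ref{M30}), so $Z(g)\supseteq Z(f)$ lies in it too, whence $g\in Z^{-1}[Z[M]]$, and since $M$ is maximal $M=Z^{-1}[Z[M]]$ (Proposition~\ref{M40} or the correspondence in Proposition~\ref{M35}), giving $g\in M$.

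So the clean route is really $(1)\Rightarrow(2)\Rightarrow(3)\Rightarrow(2)\Rightarrow(1)$ collapsed to a cycle: $(2)\Rightarrow(1)$ via the $Z_{\mathscr{A}}$-filter property of $Z[M]$ and maximality; $(1)\Rightarrow(3)$ by a contrapositive argument producing, from $h\in Ann(f)\setminus Ann(g)$, a maximal ideal witnessing $\mathfrak M(f)\not\subseteq\mathfrak M(g)$—namely extend the proper ideal generated by the idempotent $1-e_f$ (which contains $f$ but not $hg\ne 0$, hence not $g$ up to the regular-ring manipulation $g=e_gg$ and $e_g\not\le e_f$) to a maximal ideal; and $(3)\Rightarrow(2)$ as computed above via $g(1-e_f)=0$. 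Alternatively one can phrase $(1)\Rightarrow(2)$ directly: if $Z(f)\not\subseteq Z(g)$ pick $p\in Z(f)\setminus Z(g)$, then $M_p$ (Proposition~\ref{M45}) contains $f$ but not $g$, so $\mathfrak M(f)\not\subseteq\mathfrak M(g)$.

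The main obstacle is the implication bringing in $\mathfrak M(\cdot)$, specifically manufacturing a maximal ideal separating $f$ and $g$ when the zero-sets or annihilators are not comparable; everything turns on exploiting von Neumann regularity to replace $f,g$ by their support idempotents $\chi_{_{\coz f}},\chi_{_{\coz g}}$ and on the bijective correspondence between maximal ideals of $M(X)$ and $Z_{\mathscr{A}}$-ultrafilters (Proposition~\ref{M35}), so that containment of zero-sets translates faithfully into containment of the sets $\mathfrak M(\cdot)$. Once that translation is set up the equivalences are routine set manipulations with zero-sets using $Z(hf)=Z(h)\cup Z(f)$ and $Z(0)=X$.
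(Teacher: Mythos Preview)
Your argument is correct once the exploratory detours are pruned: $(1)\Rightarrow(2)$ via the point ideals $M_p$, $(2)\Rightarrow(3)$ by the zero-set computation, $(3)\Rightarrow(2)$ via $\chi_{_{Z(f)}}\in Ann(f)\subseteq Ann(g)$ forcing $\chi_{_{Z(f)}}g=0$, and $(2)\Rightarrow(1)$ via $M=Z^{-1}[Z[M]]$ for maximal $M$. The paper shares the first two steps and the same key idempotent, but then goes \emph{directly} $(3)\Rightarrow(1)$: from $\chi_{_{Z(f)}}g=0\in M$ and $f\in M$ one gets $(\chi_{_{Z(f)}}^2+f^2)g=f^2g\in M$, and since $Z(\chi_{_{Z(f)}}^2+f^2)=\emptyset$ this factor is a unit (Proposition~\ref{unit}), so $g\in M$. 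This avoids your detour back through $(2)$ and the appeal to the ultrafilter correspondence.

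Two remarks on the write-up. First, invoking Hager's von Neumann regularity is unnecessary overhead: the only idempotent you ever use is $e_f=\chi_{_{\coz f}}$, available directly from Remark~\ref{chi} and Proposition~\ref{M15} without any abstract regular-ring machinery. Second, your sketched contrapositive for $(1)\Rightarrow(3)$ contains a slip: the principal ideal $(1-e_f)M(X)$ does \emph{not} contain $f$ (indeed $f(1-e_f)=0$, and every element of $(1-e_f)M(X)$ vanishes on $\coz f$ while $f$ does not), so extending that ideal to a maximal one will not place $f$ inside. This does not damage the overall argument, since your clean $(1)\Rightarrow(2)$ via $M_p$ already closes the cycle, but that paragraph should be dropped rather than repaired.
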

\begin{proof}
(1)$\Rightarrow$(2).
We have
$$
p\in Z(f)
\Rightarrow f\in M_p\in \mathfrak M(f) \subseteq \mathfrak M(g)
\Rightarrow g\in M_p
\Rightarrow p\in Z(g).
$$

(2)$\Rightarrow$(3).
We have
$$
h\in Ann(f)
\Rightarrow X=Z(hf)=Z(h)\cup Z(f)
\subseteq Z(h)\cup Z(g)=Z(hg)
\Rightarrow h\in Ann(g).
$$

(3)$\Rightarrow$(1).
Consider $M\in  \mathfrak M(f)$.
Since
$$Z(\chi_{_{Z(f)}}f)
=Z(\chi_{_{Z(f)}})\cup Z(f)
=(X\setminus Z(f))\cup Z(f)
=X,
$$
we conclude that $\chi_{_{Z(f)}}\in Ann(f)\subseteq Ann(g)$, which follows that $\chi_{_{Z(f)}}g=0\in M.$
Since $Z(\chi_{_{Z(f)}}^2+f^2)=\emptyset$,
we conclude from Proposition \ref{unit} that
$g\in M$, it follows that $M\in  \mathfrak M(g)$.
Therefore $ \mathfrak M(f) \subseteq \mathfrak M(g) $.
\end{proof}
For each $a \in R$ let $P_a$ be the intersection of all minimal prime ideals
containing $a$ and by convention, we put the
intersection of an empty set of ideals equal to $R$.
We recall from \cite{AzarpanahKaramzadehRezai2000} that a proper ideal $I$ of a ring $R$ is called a $Z^{\circ}$-ideal if for each $a \in I$
we have $P_a \in I$. Also,

\begin{proposition} \label{M55}
{\rm \cite{AzarpanahKaramzadehRezai2000}}
Let $R$ be a reduced ring and $I$ be a proper ideal in $R$,
then the following are equivalent.
  \begin{enumerate}
\item[{\rm (1)}]  I is a $Z^{\circ}$-ideal in $R$.
\item[{\rm (2)}]  $P_a = P_b$ and $a\in I$, imply that $b\in I.$
\item[{\rm (3)}]  $Ann(a) = Ann(b)$  and $a\in I$, imply that $b\in I.$
\item[{\rm (4)}]  $a \in I$ implies that $Ann(Ann(a))\subseteq  I.$
\end{enumerate}
\end{proposition}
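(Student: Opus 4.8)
The statement is quoted from \cite{AzarpanahKaramzadehRezai2000}; here is the route I would take to prove it. The plan is to extract one structural identity valid in every reduced ring, namely $P_a = Ann(Ann(a))$ for all $a \in R$, with the convention (matching the one used in the definition of $P_a$) that an empty intersection of ideals is $R$; the two conventions agree because $a$ lies in no minimal prime exactly when $Ann(a) = 0$, in which case $Ann(Ann(a)) = R$. Once this identity is available, the chain $(1) \Rightarrow (2) \Rightarrow (3) \Rightarrow (4) \Rightarrow (1)$ becomes almost formal: since ``$P_a \in I$'' abbreviates $P_a \subseteq I$, statements (1) and (4) are literally the same assertion about an arbitrary $a \in I$; and since $Ann(a) = Ann(b)$ forces $Ann(Ann(a)) = Ann(Ann(b))$ and hence $P_a = P_b$, the implication $(2) \Rightarrow (3)$ is immediate from the identity. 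So the real work is the identity together with the single implication $(3) \Rightarrow (4)$.

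To prove the identity I would first record two facts about a prime $P$ of a reduced ring $R$: (a) if $a \notin P$ then $Ann(a) \subseteq P$, which is immediate from primeness and holds for every prime; and (b) if $P$ is a \emph{minimal} prime and $a \in P$, then there is some $s \notin P$ with $sa = 0$, i.e.\ $Ann(a) \not\subseteq P$ — this is the classical description of minimal primes (every element of a minimal prime is a zero-divisor witnessed outside the prime), specialized to the reduced case where ``nilpotent'' collapses to ``zero''. Using (a) and (b): given $b \in Ann(Ann(a))$ and a minimal prime $P$ with $a \in P$, pick $s \in Ann(a) \setminus P$ by (b); then $bs = 0 \in P$ together with $s \notin P$ forces $b \in P$, whence $Ann(Ann(a)) \subseteq P_a$. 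Conversely, given $b \in P_a$ and $s \in Ann(a)$, the product $bs$ lies in every minimal prime $P$ — if $a \in P$ then $b \in P$ because $P$ is one of the primes defining $P_a$, and if $a \notin P$ then $s \in P$ by (a) — so $bs = 0$ since $R$ is reduced, giving $b \in Ann(Ann(a))$ and $P_a \subseteq Ann(Ann(a))$.

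It then remains to run the cycle. The implications $(1) \Rightarrow (2)$, $(2) \Rightarrow (3)$ and $(4) \Rightarrow (1)$ are immediate from the identity, using only the elementary fact that $b \in Ann(Ann(b)) = P_b$ (since $b$ annihilates $Ann(b)$). The one implication carrying content beyond the identity — and the place I expect the main obstacle — is $(3) \Rightarrow (4)$: given $a \in I$ and $c \in Ann(Ann(a))$, I must produce an element of $I$ with the same annihilator as $c$, and the right choice is $ac$. One has $ac \in I$ automatically, so it suffices to show $Ann(ac) = Ann(c)$. The inclusion $Ann(c) \subseteq Ann(ac)$ is obvious; conversely, if $rac = 0$ then $rc \in Ann(a)$, hence $c \cdot (rc) = 0$ because $c \in Ann(Ann(a))$, hence $(rc)^2 = r(rc^2) = 0$, and therefore $rc = 0$ since $R$ is reduced, so $r \in Ann(c)$. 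With $Ann(ac) = Ann(c)$ and $ac \in I$, condition (3) yields $c \in I$; as $c$ ranged over $Ann(Ann(a))$, this gives $Ann(Ann(a)) \subseteq I$, which is (4). The reducedness hypothesis is used exactly in proving the identity and in this last computation, which is also why it cannot be dropped.
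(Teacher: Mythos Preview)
The paper does not supply a proof of this proposition; it simply records the statement with a citation to \cite{AzarpanahKaramzadehRezai2000} and uses it as a black box. So there is no ``paper's proof'' to compare against beyond the reference itself.

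Your argument is correct and is essentially the standard one. The identity $P_a = Ann(Ann(a))$ in a reduced ring is established exactly as you describe, via the two facts about minimal primes, and the edge case (when $a$ lies in no minimal prime) is handled consistently by your convention. The cycle of implications is sound; in particular, the step $(3)\Rightarrow(4)$ --- choosing $ac$ and showing $Ann(ac)=Ann(c)$ by squaring $rc$ --- is the right trick and uses reducedness in the expected place. Nothing is missing.
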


\begin{definition} \label{M60}
An ideal $I$ of a ring $ M(X)$ is called a
  \textit{ $Z_{\mathscr{A}}$-ideal}   if whenever $Z(f)\subseteq Z(g) $, $f\in I$ and $g\in  M(X)$, then $g\in I$.
\end{definition}
\begin{proposition} \label{M65}
Let $(X, \mathscr{A})$ be a measurable space.
If $I$ is an ideal in $ M(X)$, then the following statements are equivalent.
\begin{enumerate}
\item[{\rm (1)}]   $I$ is a $z$-ideal \`a la Mason of $ M(X)$.
\item[{\rm (2)}] $I$ is a $Z_{\mathscr{A}}$-ideal of $ M(X)$.
\item[{\rm (3)}] $I$ is a $Z^{\circ}$-ideal of $ M(X)$.
\end{enumerate}
\end{proposition}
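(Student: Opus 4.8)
The plan is to prove the three equivalences by exploiting Lemma \ref{M50}, which tells us that for $f,g\in M(X)$ the three conditions $\mathfrak{M}(f)\subseteq\mathfrak{M}(g)$, $Z(f)\subseteq Z(g)$, and $\mathrm{Ann}(f)\subseteq\mathrm{Ann}(g)$ are all equivalent. This is the workhorse: it translates the $z$-ideal \`a la Mason condition, the $Z_{\mathscr{A}}$-ideal condition, and (via Proposition \ref{M55}) the $Z^{\circ}$-ideal condition into statements about the same binary relation on $M(X)$.

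First I would show (1)$\Leftrightarrow$(2). Suppose $I$ is a $z$-ideal \`a la Mason, and take $f\in I$, $g\in M(X)$ with $Z(f)\subseteq Z(g)$. By Lemma \ref{M50} (2)$\Rightarrow$(1), $\mathfrak{M}(f)\subseteq\mathfrak{M}(g)$, so $g\in I$ by the $z$-ideal hypothesis; hence $I$ is a $Z_{\mathscr{A}}$-ideal. Conversely, if $I$ is a $Z_{\mathscr{A}}$-ideal and $f\in I$, $g\in M(X)$ with $\mathfrak{M}(f)\subseteq\mathfrak{M}(g)$, then by Lemma \ref{M50} (1)$\Rightarrow$(2) we get $Z(f)\subseteq Z(g)$, so $g\in I$. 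Thus (1)$\Leftrightarrow$(2).

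Next I would show (2)$\Leftrightarrow$(3). Since $M(X)$ is reduced (it is a subring of the reduced ring $\mathbb{R}^X$, or one may invoke that it is von Neumann regular), Proposition \ref{M55} applies: $I$ is a $Z^{\circ}$-ideal if and only if ($\mathrm{Ann}(a)=\mathrm{Ann}(b)$ and $a\in I$ imply $b\in I$). Actually it is cleaner to observe that for a $Z_{\mathscr{A}}$-ideal the stronger one-sided condition holds: if $f\in I$ and $\mathrm{Ann}(f)\subseteq\mathrm{Ann}(g)$, then by Lemma \ref{M50} (3)$\Rightarrow$(2) we have $Z(f)\subseteq Z(g)$, hence $g\in I$; specializing to $\mathrm{Ann}(f)=\mathrm{Ann}(g)$ gives condition (3) of Proposition \ref{M55}, so $I$ is a $Z^{\circ}$-ideal. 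Conversely, if $I$ is a $Z^{\circ}$-ideal and $f\in I$, $g\in M(X)$ with $Z(f)\subseteq Z(g)$, then by Lemma \ref{M50} (2)$\Rightarrow$(3) we get $\mathrm{Ann}(f)\subseteq\mathrm{Ann}(g)$; to conclude $g\in I$ I would use Proposition \ref{M55}(4): $f\in I$ forces $\mathrm{Ann}(\mathrm{Ann}(f))\subseteq I$, and since $\mathrm{Ann}(f)\subseteq\mathrm{Ann}(g)$ we have $g\in\mathrm{Ann}(\mathrm{Ann}(g))\subseteq\mathrm{Ann}(\mathrm{Ann}(f))\subseteq I$. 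This closes the cycle.

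The only genuine subtlety — the main obstacle, though a mild one — is to make sure every implication is applied in a logically valid direction and that Proposition \ref{M55} is invoked with its hypotheses met, in particular that $M(X)$ is reduced and that the ideal in question is proper (the $Z^{\circ}$-ideal notion is defined only for proper ideals, so for the improper ideal $I=M(X)$ all three statements hold trivially and can be dispatched in one line). Everything else is a direct substitution of the equivalences from Lemma \ref{M50}, so once the bookkeeping is set up the proof is short.
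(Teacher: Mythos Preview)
Your proof is correct and follows exactly the route the paper takes: the paper's own proof is the single sentence ``By Lemma \ref{M50} and Proposition \ref{M55}, it is evident,'' and you have simply unpacked that sentence into its constituent implications. Your remark about the proper-ideal hypothesis in Proposition \ref{M55} is a reasonable bit of care that the paper omits.
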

\begin{proof}
By Lemma \ref{M50} and Proposition \ref{M55}, it is evident.
\end{proof}
\begin{proposition} \label{M66}
Let $(X, \mathscr{A})$ be a measurable space. Every  ideal in $ M(X)$ is a
 $z$-ideal.
\end{proposition}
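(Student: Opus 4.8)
The plan is to reduce to the equivalent formulation supplied by Proposition \ref{M65}: since, for an ideal of $M(X)$, being a $z$-ideal \`a la Mason, a $Z_{\mathscr{A}}$-ideal, and a $Z^{\circ}$-ideal all coincide, it suffices to check that an arbitrary ideal $I$ of $M(X)$ is a $Z_{\mathscr{A}}$-ideal. So I would fix $f\in I$ and $g\in M(X)$ with $Z(f)\subseteq Z(g)$, and the whole task becomes: produce $h\in M(X)$ with $g=hf$. Once this is done, $g=hf\in I$ because $I$ is an ideal, and the proof is finished.

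The candidate is the obvious one: define $h\colon X\to\mathbb R$ by $h(x)=g(x)/f(x)$ for $x\in X\setminus Z(f)$ and $h(x)=0$ for $x\in Z(f)$. Checking $g=hf$ pointwise is immediate --- on $X\setminus Z(f)$ it is the identity $(g(x)/f(x))\cdot f(x)=g(x)$, and on $Z(f)\subseteq Z(g)$ both sides are $0$. The only real point is to verify $h\in M(X)$. For this I would first treat $f^{\star}$, defined by $f^{\star}(x)=1/f(x)$ off $Z(f)$ and $f^{\star}(x)=0$ on $Z(f)$, and show $f^{\star}\in M(X)$ by writing, for each $r\in\mathbb R$, the set $(f^{\star})^{-1}(r,+\infty)$ as a finite Boolean combination of the measurable sets $({\bf r}f)^{-1}(1,+\infty)$, $({\bf r}f)^{-1}(-\infty,1)$, $f^{-1}(0,+\infty)$, $f^{-1}(-\infty,0)$ and $Z(f)=f^{-1}[0,+\infty)\cap f^{-1}(-\infty,0]$ --- this is the same computation as the sufficiency half of Proposition \ref{unit}, now carried out on the measurable set $X\setminus Z(f)$ rather than on all of $X$. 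Having $f^{\star}\in M(X)$, we get $h=g\cdot f^{\star}\in M(X)$ since $M(X)$ is a ring, and we are done.

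The step I expect to be the only obstacle is exactly this measurability check for $f^{\star}$ (equivalently for $h$); everything else is purely formal. A shorter alternative, if one is willing to quote that $M(X)$ is a von Neumann regular ring (as recalled in the introduction), is to choose $f'\in M(X)$ with $f^{2}f'=f$, observe that $e:=ff'$ is an idempotent of $M(X)$ lying in $I$ with $fe=f$, so that $e=\chi_{A}$ for some $A\in\mathscr{A}$ with $X\setminus Z(f)\subseteq A$, and then note that $ge=g$ pointwise: where $e=1$ this is trivial, and where $e=0$ one has $x\notin X\setminus Z(f)$, i.e.\ $x\in Z(f)\subseteq Z(g)$, so $g(x)=0=g(x)e(x)$; hence $g=ge\in I$. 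Either route yields the claim, but I would present the first one, since it keeps the argument self-contained.
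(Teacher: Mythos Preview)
Your proposal is correct and follows essentially the same route as the paper: the paper defines exactly your $f^{\star}$ (calling it $h$), asserts $h\in M(X)$, and concludes $g=ghf\in I$. The only difference is that the paper does not spell out the measurability of $f^{\star}$, which you handle explicitly; your discussion is therefore a bit more detailed than the paper's, but the underlying argument is identical.
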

\begin{proof}
Let $I$ be an  ideal of $ M(X)$.
Suppose that $f,g\in  M(X)$ with
 $Z(f)\subseteq Z(g) $ and  $f\in I$.
 We define $h:X\to \mathbb{R}$ by $h(x)=\frac{1}{f(x)}$, if $x\in  \coz(f)$
  and $0$ otherwise. Then $h\in M(X)$ and
  $g=ghf\in I$. This completes the proof, by Proposition \ref{M65}.
\end{proof}
The following proposition shows that
 the primeity
of a ideal  in $M(X)$   coincides with its
semiprimeity.
\begin{proposition}   \label{M110}
Let $(X, \mathscr{A})$ be a measurable space.
Let $I$ be a  proper ideal in $ M(X)$.
Then the following statements are equivalent.
\begin{enumerate}
\item[{\rm (1)}]  $I$ is a prime ideal.
\item[{\rm (2)}]  $I$ contains a prime ideal.
\item[{\rm (3)}]  For every  $f,g \in  M(X)$, if $fg= \mathbf{0}$, then $f\in I$ or $g\in I$.
\item[{\rm (4)}]  For every $f \in  M(X)$, there is a zero set belonging to $Z[I]$ on which $f$ does not change sign.
\end{enumerate}
\end{proposition}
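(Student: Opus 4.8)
The plan is to prove the cycle of implications $(1)\Rightarrow(2)\Rightarrow(3)\Rightarrow(4)\Rightarrow(1)$. The implication $(1)\Rightarrow(2)$ is trivial, since $I$ is a prime ideal contained in itself, and $(2)\Rightarrow(3)$ is nearly as easy: if $P$ is a prime ideal with $P\subseteq I$ and $fg=\mathbf 0$, then $fg\in P$, whence $f\in P\subseteq I$ or $g\in P\subseteq I$.

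For $(3)\Rightarrow(4)$, I would fix $f\in M(X)$ and split it into its positive and negative parts $f^{+}=f\vee\mathbf 0$ and $f^{-}=(-f)\vee\mathbf 0$. These lie in $M(X)$ because $M(X)$ is a sub-$f$-ring of $\mathbb R^{X}$, and $f^{+}f^{-}=\mathbf 0$ since at each point one of the two factors vanishes. By (3), $f^{+}\in I$ or $f^{-}\in I$. In the first case $Z(f^{+})=\{x\in X: f(x)\le 0\}$ is a member of $Z[I]$ on which $f$ does not change sign; in the second case $Z(f^{-})=\{x\in X: f(x)\ge 0\}$ does the job.

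The substantive step is $(4)\Rightarrow(1)$. Suppose $fg\in I$; I must show $f\in I$ or $g\in I$. Apply (4) to $h=|f|-|g|\in M(X)$ to obtain a zero-set $E\in Z[I]$ on which $h$ does not change sign. Since $Z[I]$ is a $Z_{\mathscr A}$-filter (Proposition \ref{M30}), it is closed under finite intersections, so $E\cap Z(fg)=E\cap\bigl(Z(f)\cup Z(g)\bigr)\in Z[I]$. If $h\ge\mathbf 0$ on $E$, i.e.\ $|f|\ge|g|$ on $E$, then $Z(f)\cap E\subseteq Z(g)\cap E$, hence $E\cap Z(fg)=Z(g)\cap E\subseteq Z(g)$; because every ideal of $M(X)$ is a $z$-ideal (Proposition \ref{M66}), the membership $Z(g)\cap E\in Z[I]$ forces $g\in I$. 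If instead $h\le\mathbf 0$ on $E$, the symmetric computation yields $E\cap Z(fg)=Z(f)\cap E\subseteq Z(f)$ and therefore $f\in I$. Either way $I$ is prime, closing the cycle.

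The only place requiring a genuine idea is the choice of the auxiliary function $h=|f|-|g|$ in $(4)\Rightarrow(1)$: a fixed sign of $h$ on $E$ is precisely an inclusion between $Z(f)\cap E$ and $Z(g)\cap E$, which collapses $E\cap Z(fg)$ onto one of these two sets and lets the $z$-ideal property of Proposition \ref{M66} finish the argument. Everything else is routine bookkeeping with the filter $Z[I]$, so I anticipate no real obstacle beyond keeping the sign conventions and set inclusions straight.
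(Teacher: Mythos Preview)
Your proof is correct and follows essentially the same route as the paper: the same cycle of implications, the same decomposition $f^{+}f^{-}=\mathbf 0$ for $(3)\Rightarrow(4)$, and the same auxiliary function $|f|-|g|$ for $(4)\Rightarrow(1)$, followed by an appeal to the fact that every ideal of $M(X)$ is a $z$-ideal (Proposition~\ref{M66}). The only cosmetic difference is that the paper names a specific element $(fg)^{2}+k^{2}\in I$ (where $Z(k)=E$) whose zero-set is $E\cap Z(fg)$, whereas you invoke the filter property of $Z[I]$ directly to get $E\cap Z(fg)\in Z[I]$; the content is identical.
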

\begin{proof}
(1)$\Rightarrow$(2) and (2)$\Rightarrow$(3).
 Trivial.

(3)$\Rightarrow$(4).
 Observe that  for
every $f\in M(X)$,
$$(f \vee \mathbf 0)(f \wedge \mathbf 0)=f^+(-f^-) = \mathbf 0.$$
 Then, by hypothesis, either $f \vee \mathbf 0$ or $f \wedge \mathbf 0$ is  in $I$ , and hence $Z(f \vee \mathbf 0)$ or $Z(f \wedge \mathbf 0)$ is  in $Z[I]$, and  however,  $f$ does not change sign on them, since
 $$  Z(f \wedge \mathbf 0)\cap f^{-1}(-\infty,0)=f^{-1}[0,+\infty)\cap f^{-1}(-\infty,0)=\emptyset $$
 and
 $$ Z(f \vee \mathbf 0)\cap f^{-1}(0,+\infty)=f^{-1}(-\infty,0]\cap f^{-1}(0,+\infty)=\emptyset.$$

 (4)$\Rightarrow$(1).
 Given $gh \in I$, consider the function $\vert g\vert- \vert h\vert$.
By hypothesis, there is a zero-element $Z$ of $Z[I]$ on which $\vert g\vert- \vert h\vert$ is
non-negative, say,
$ Z\cap(\vert g\vert- \vert h\vert)^{-1}(-\infty,0)=\emptyset $. Then there is a $f\in I$  such that $Z=Z(f)$, it follows that
 $Z(f)\cap Z(g)\subseteq Z(h)$.
 From
 $$
 Z((hg)^2+f^2)
 =Z(hg)\cap Z(f)
= [Z(h)\cap Z(f)]\cup[Z(g)\cap Z(f)]
 \subseteq Z(h)
 $$
and $ (hg)^2+f^2\in I$,
 we conclude that $h\in I$, since, by Propositions \ref{M65} and \ref{M66},  $I$ is the $Z_{\mathscr{A}}$-ideal in $ M(X)$. Thus, $I$ is prime.
\end{proof}

\section{Maximal ideals in $M(X)$ } \label{4}
Let $(X, \mathscr{A})$ be a measurable space.
For each $I \in Id(\mathscr{A})$, the ideals $M^I$ is
 defined by
$$M^I = \{f\in  M(X) :  \coz(f)\in I\}.$$
\begin{lemma}  \label{I=J}
Let $(X, \mathscr{A})$ be a measurable space.
 For each  $I, J \in Id(\mathscr{A})$,
  $M^I =M^J$ if and only if
  $I=J$.
\end{lemma}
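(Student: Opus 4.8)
The plan is to prove the two directions of the biconditional separately, with the nontrivial one being that $M^I = M^J$ forces $I = J$.

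The "if" direction is immediate: if $I = J$ then the defining condition $\coz(f) \in I$ is literally the same as $\coz(f) \in J$, so $M^I = M^J$.

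For the "only if" direction, suppose $M^I = M^J$, and I would show $I \subseteq J$ (the reverse inclusion following by symmetry). Let $A \in I$. The key observation, via Remark \ref{chi}, is that the characteristic function $\chi_{_A}$ lies in $M(X)$ and satisfies $Z(\chi_{_A}) = A^c$, hence $\coz(\chi_{_A}) = A$. Therefore $\coz(\chi_{_A}) = A \in I$, which says $\chi_{_A} \in M^I = M^J$. By definition of $M^J$ this gives $\coz(\chi_{_A}) = A \in J$. Since $A \in I$ was arbitrary, $I \subseteq J$; swapping the roles of $I$ and $J$ gives $J \subseteq I$, so $I = J$.

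I do not expect any real obstacle here; the whole argument rests on the fact that $\coz(\chi_{_A}) = A$ exactly recovers the generating member of the ideal, so that the cozero-sets of functions in $M^I$ determine $I$ completely. The only point requiring a tiny bit of care is recording that $\chi_{_A} \in M(X)$, which is exactly the content of Remark \ref{chi}, and noting that $\coz$ of a characteristic function is the set itself (the complement of its zero-set). No appeal to the ideal structure of $Id(\mathscr{A})$ beyond the containment relation is needed.
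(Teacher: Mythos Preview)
Your proof is correct and follows essentially the same approach as the paper: both use that $\chi_{_A}\in M^I$ with $\coz(\chi_{_A})=A$ to recover $A\in J$ from $M^I=M^J$, and treat the converse as trivial. Your version is simply more explicit about invoking Remark~\ref{chi} and the symmetry step.
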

\begin{proof}
\textit{Necessity.}
If $a\in I$ then $\chi_a\in M^I $, which follows that
$a= \coz(\chi_a)\in J$. Therefore, $I=J$.

\textit{Sufficiency.}  It is clear.
\end{proof}

\begin{remark} \label{prime=max}
Let $(X, \mathscr{A})$ be a measurable space.
Consider $I \in \Sigma Id(\mathscr{A})$ and
$I\subsetneq J\in  Id(\mathscr{A})$.
Then there exists $a\in J\setminus I$ and since
$a\wedge a'=\bot\in I \in \Sigma Id(\mathscr{A})$,
we conclude that $a'\in I\subseteq J$,
which follows that $\top=a\vee a'\in J$ and so,
$J= \mathscr{A}$. Therefore,
$$\Sigma Id(\mathscr{A})=Max(Id(\mathscr{A})).$$
\end{remark}
In the following proposition, we
 investigate  the relations between
maximal ideals    of $ M(X)$ and
maximal ideals    of $\mathscr{A}$.
\begin{proposition} \label{max} max
Let $(X, \mathscr{A})$ be a measurable frame.
A subset $M$ of $ M(X)$
is a maximal ideal if and only if  there exists a unique
$J \in \Sigma Id(\mathscr{A})$ such that $M= M^J$.
\end{proposition}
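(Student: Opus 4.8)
The plan is to route everything through the order-reversing duality between ideals and filters of the Boolean algebra $\mathscr{A}$, together with the correspondences already established in Propositions \ref{M30} and \ref{M35}. For $I\in Id(\mathscr{A})$ write $\mathcal{F}_I:=\{B\in\mathscr{A}: B^{c}\in I\}$ for the dual filter. Since complementation is an order-reversing involution of the Boolean algebra $\mathscr{A}$, the assignment $I\mapsto\mathcal{F}_I$ is a bijection from $Id(\mathscr{A})$ onto the set of filters of $\mathscr{A}$; it sends proper ideals to proper filters (as $X\notin I$ whenever $I\neq\mathscr{A}$) and sends $Max(Id(\mathscr{A}))$ onto the maximal proper filters of $\mathscr{A}$, which are precisely the $Z_{\mathscr{A}}$-ultrafilters on $X$ (a $Z_{\mathscr{A}}$-ultrafilter was defined as a filter maximal among proper filters of $\mathscr{A}$). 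By Remark \ref{prime=max}, $Max(Id(\mathscr{A}))=\Sigma Id(\mathscr{A})$. The key identity is that for $f\in M(X)$ one has $\coz(f)\in I \iff Z(f)^{c}\in I \iff Z(f)\in\mathcal{F}_I$, so that $M^{I}=Z^{-1}[\mathcal{F}_I]$ for every $I\in Id(\mathscr{A})$.

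Granting this, the two implications become short. For the ``if'' direction, let $J\in\Sigma Id(\mathscr{A})$; then $\mathcal{F}_J$ is a $Z_{\mathscr{A}}$-ultrafilter, so $M^{J}=Z^{-1}[\mathcal{F}_J]$ is a maximal ideal of $M(X)$ by Proposition \ref{M35}(2). For the ``only if'' direction, let $M$ be a maximal ideal of $M(X)$. By Proposition \ref{M35}(1), $Z[M]$ is a $Z_{\mathscr{A}}$-ultrafilter; put $J:=\{A\in\mathscr{A}: A^{c}\in Z[M]\}$, which is then a maximal proper ideal of $\mathscr{A}$, i.e. $J\in\Sigma Id(\mathscr{A})$, and one checks directly $\mathcal{F}_J=Z[M]$. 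Hence $M^{J}=Z^{-1}[\mathcal{F}_J]=Z^{-1}[Z[M]]$. Now $M\subseteq Z^{-1}[Z[M]]$ trivially, and $Z^{-1}[Z[M]]$ is a proper ideal by Proposition \ref{M30}(2) (since $Z[M]$ is a $Z_{\mathscr{A}}$-filter), so maximality of $M$ forces $M=Z^{-1}[Z[M]]=M^{J}$. Uniqueness of $J$ is exactly Lemma \ref{I=J}: if $M^{J}=M^{J'}$ with $J,J'\in\Sigma Id(\mathscr{A})$, then $J=J'$.

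The only genuine work is the ``dictionary'' paragraph: verifying that complementation interchanges (proper, maximal) ideals with (proper, ultra-) filters of $\mathscr{A}$, and that $Z_{\mathscr{A}}$-ultrafilters coincide with the maximal proper filters of $\mathscr{A}$. This is routine Boolean-algebra bookkeeping; the one point to watch is matching the paper's definition of $Z_{\mathscr{A}}$-ultrafilter (maximal among $Z_{\mathscr{A}}$-filters, i.e.\ among proper filters of $\mathscr{A}$) with the notion of ultrafilter of the Boolean algebra $\mathscr{A}$, and remembering that in a Boolean algebra maximal proper ideals are prime, so no mismatch with $\Sigma Id(\mathscr{A})$ arises once Remark \ref{prime=max} is invoked. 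I expect this identification, and the equality $M^{I}=Z^{-1}[\mathcal{F}_I]$, to be the crux; everything else is an immediate consequence of Propositions \ref{M30}, \ref{M35} and Lemma \ref{I=J}.
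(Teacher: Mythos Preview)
Your proof is correct, and it takes a genuinely different route from the paper's. The paper argues each direction ``by hand'': for sufficiency it shows directly that any ideal $Q\supsetneq M^J$ contains a unit (producing $\chi_a+f^2$ with $a\vee\coz(f)=\top$ from $\coz(f)\notin J$), and for necessity it builds the ideal $I=\{a\in\mathscr{A}:a\leq\coz(f)\text{ for some }f\in M\}$, invokes compactness of the frame $Id(\mathscr{A})$ to extend $I$ to some $J\in\Sigma Id(\mathscr{A})$, and then uses maximality of $M$ to get $M=M^J$. Your approach instead routes everything through the Boolean duality $I\mapsto\mathcal{F}_I$ and the identity $M^I=Z^{-1}[\mathcal{F}_I]$, so that both directions reduce immediately to Proposition~\ref{M35}. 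This is tidier and, in particular, shows that the paper's extension step is unnecessary: the ideal $I$ it constructs is already your $J$ (dual to the ultrafilter $Z[M]$), hence already maximal. What the paper's argument buys is self-containment---it does not lean on Proposition~\ref{M35}---while yours makes explicit that Proposition~\ref{max} is essentially Proposition~\ref{M35} transported across complementation in $\mathscr{A}$.
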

\begin{proof} \textit{Necessity.}
Let $M$ be a maximal ideal   of $ M(X)$.
We set $I=\{a\in \mathscr{A} :  a\leq  \coz(f) \mbox{ for some $f\in M$}\}$. By Proposition \ref{unit}, $\mathscr{A}\not=I\in Id(\mathscr{A})$ and
since $Id(\mathscr{A})$ is a compact frame,
we conclude that $J \in \Sigma Id(\mathscr{A})$ such that
$I\subseteq J$.
From $\top\not\in J$, we infer that
 $M\subseteq M^J\not=  M(X)$,
 and  in view of
the maximality of $M$ we must have $M= M^J$.
By Lemma \ref{I=J},  there exists a unique
$J \in \Sigma Id(\mathscr{A})$ such that $M= M^J$.

\textit{Sufficiency.}
Consider $ J \in \Sigma Id(\mathscr{A})$ and
$Q\in Id(  M(X))$ with
$M^J\subsetneq Q$.
Then there exists $f\in Q\setminus M^J$.
From $ \coz(f)\not\in J$, we infer from Remark \ref{prime=max} that
there exists $a\in J$ such that
$$ \coz(\chi_a + f^2)= \coz(\chi_a)\vee  \coz(f)=a\vee  \coz(f)=\top,$$
which follows that $\chi_a + f^2\in Q$ is a unit element of
$ M(X)$ and so,
$Q= M(X)$.
Therefore,  $M^J$ is a maximal ideal   of $ M(X)$.
\end{proof}
\begin{definition} \label{M90}
Let $I$ be any ideal in $ M(X)$. If $\bigcap_{f\in I}Z(f)$ is nonempty, we
call $I$ a {\it fixed ideal}; if $\bigcap_{f\in I}Z(f)=\emptyset$, then $I$ is a  {\it free ideal}.
 Also, if $\mathcal K\subseteq \mathscr{A}$  with $\bigcap\mathcal K$ is nonempty, we
call $\mathcal K$ a  {\it fixed subset} of $\mathscr{A}$; if $\bigcap\mathcal K=\emptyset$, then $\mathcal K$ is a  {\it free subset} of $\mathscr{A}$.
\end{definition}
In the following proposition, we
 investigate  the relations between
 fixed maximal ideals    of $ M(X)$ and
prime ideals    of $\mathscr{A}$.
\begin{proposition} \label{M45-1}  M45-1
Let $(X, \mathscr{A})$ be a measurable space.
 For every subset $M$ of   $M(X)$,
  $M$ is a fixed maximal ideal of $M(X)$
  if and only if there exists a prime ideal $P$ of
  $\mathscr{A}$ such that $\bigcup P\subsetneq X$ and $M=M^P.$
\end{proposition}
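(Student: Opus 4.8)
The plan is to combine Proposition~\ref{max}, which identifies maximal ideals of $M(X)$ with prime elements $J\in\Sigma Id(\mathscr{A})$ via $M=M^J$, with a translation of the "fixed" condition into the language of $Id(\mathscr{A})$. First I would establish the dictionary between $\bigcap_{f\in M^J}Z(f)$ and $\bigcup J$. For $f\in M(X)$ we have $Z(f)=X\setminus\coz(f)$, so
\[
\bigcap_{f\in M^J}Z(f)=\bigcap_{f\in M^J}\bigl(X\setminus\coz(f)\bigr)=X\setminus\bigcup_{f\in M^J}\coz(f)=X\setminus\bigcup J,
\]
where the last equality uses that $\coz(f)\in J$ for $f\in M^J$ together with the fact that every $a\in J$ equals $\coz(\chi_a)$ with $\chi_a\in M^J$ (Remark~\ref{chi}); here $\bigcup J$ denotes $\bigcup_{a\in J}a$ as a subset of $X$. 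Hence $M^J$ is fixed precisely when $X\setminus\bigcup J\neq\emptyset$, i.e.\ when $\bigcup J\subsetneq X$.

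For the necessity direction, let $M$ be a fixed maximal ideal. By Proposition~\ref{max} there is a unique $J\in\Sigma Id(\mathscr{A})$ with $M=M^J$, and $J$ is in particular a prime ideal of $\mathscr{A}$ (prime elements of $Id(\mathscr{A})$ are prime ideals of the lattice $\mathscr{A}$). Since $M$ is fixed, the displayed identity gives $\bigcup J\subsetneq X$. Setting $P=J$ finishes this direction. For sufficiency, suppose $P$ is a prime ideal of $\mathscr{A}$ with $\bigcup P\subsetneq X$. By Remark~\ref{prime=max} we have $\Sigma Id(\mathscr{A})=Max(Id(\mathscr{A}))$, so $P\in\Sigma Id(\mathscr{A})$, and Proposition~\ref{max} gives that $M^P$ is a maximal ideal of $M(X)$. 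Again by the displayed identity, $\bigcap_{f\in M^P}Z(f)=X\setminus\bigcup P\neq\emptyset$, so $M=M^P$ is fixed.

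The one point that needs a little care—and which I expect to be the main (mild) obstacle—is the precise relationship between "prime ideal $P$ of the lattice $\mathscr{A}$" as it appears in the statement and "$P\in\Sigma Id(\mathscr{A})$", i.e.\ a prime element of the frame of lattice-ideals of $\mathscr{A}$. These are reconciled by Remark~\ref{prime=max}: in $Id(\mathscr{A})$ every prime element is maximal, and conversely a proper lattice ideal $P\subsetneq\mathscr{A}$ is prime (in the frame $Id(\mathscr{A})$) exactly when $a\wedge b\in P$ forces $a\in P$ or $b\in P$, which is the lattice-theoretic primeness of $P$ as an ideal of $\mathscr{A}$. So the hypothesis "$P$ is a prime ideal of $\mathscr{A}$" is exactly "$P\in\Sigma Id(\mathscr{A})$", and nothing beyond the already-proved Propositions~\ref{max} and the remarks is required; the rest is the bookkeeping with complements and coz-sets indicated above.
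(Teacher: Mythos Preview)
Your proof is correct and follows essentially the same route as the paper: invoke Proposition~\ref{max} to write $M=M^P$ with $P\in\Sigma Id(\mathscr{A})$, and translate ``fixed'' into $\bigcup P\subsetneq X$ via $\bigcap_{f\in M^P}Z(f)=X\setminus\bigcup_{f\in M^P}\coz(f)=X\setminus\bigcup P$, using that each $a\in P$ is $\coz(\chi_a)$ with $\chi_a\in M^P$. Your extra paragraph justifying that ``prime ideal of $\mathscr{A}$'' coincides with ``prime element of $Id(\mathscr{A})$'' is a point the paper passes over silently, so your version is, if anything, more careful.
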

\begin{proof}
\textit{Necessity.}
Let $M$ be a fixed maximal ideal of $M(X)$.
Then, by Proposition \ref{max}, $M = M^P$ for some  $P$ of $\Sigma Id(\mathscr{A})$.
 Since   for every $A\in P$, $\chi_A\in M^{P}$,
 we infer that $ \bigcup P=\bigcup_{f\in M^P}  \coz(f)\subsetneq X$.

\textit{Sufficiency.}
Consider $P\in \Sigma Id(\mathscr{A})$ with $\bigcup P\subsetneq X$.
Then, by   Proposition \ref{max},
$ M^{ P}$
is a maximal ideal in $M(X)$.
Since   for every $A\in P$, $\chi_A\in M^{P}$,
we conclude that $\bigcup_{f\in M^P}  \coz(f)=\bigcup P\subsetneq X$,
which follows that $M^P$  is a fixed maximal ideal of $M(X)$.
\end{proof}
Let $(X, \mathscr{A})$ be a measurable space.
For each $A \in \mathscr{A}$ with $A \subsetneq X$,
define the subset $M_A$ of $M(X)$ by
$$
M_A:=\{f\in     M(X) :  \coz(f)\subseteq A\}
$$

In the following proposition, we
 investigate  the relations between
 fixed maximal ideals    of $ M(X)$ and
prime elements    of $\mathscr{A}$.
\begin{proposition} \label{fixmax}
Let $(X, \mathscr{A})$ be a measurable space.
 For every subset $M$ of   $M(X)$,
  $M$ is a fixed maximal ideal of $M(X)$ with
  $\bigcup_{f\in M}  \coz(f)\in \mathscr{A}$
  if and only if there exists a prime element $P$ of
  $\mathscr{A}$ such that   $M=M_P.$
\end{proposition}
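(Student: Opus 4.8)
The plan is to deduce this from Proposition \ref{M45-1} together with the observation that, inside the Boolean algebra $\mathscr{A}$, prime elements are exactly the coatoms (maximal elements), and that for such an element $P$ the down-set $\downarrow P=\{A\in\mathscr{A}:A\subseteq P\}$ is a \emph{principal} maximal ideal of $\mathscr{A}$ with $\bigcup(\downarrow P)=P$. The first bookkeeping facts I would record are the identity $M_P=M^{\downarrow P}$ (immediate, since $\coz(f)\subseteq P$ iff $\coz(f)\in\downarrow P$), the identity $\bigcup_{f\in M^{I}}\coz(f)=\bigcup I$ for any $I\in Id(\mathscr{A})$ (the inclusion $\subseteq$ is clear; for $\supseteq$ take $f=\chi_{A}$ with $A\in I$ and use Remark \ref{chi}), and Remark \ref{prime=max}, which identifies $\Sigma Id(\mathscr{A})$ with $Max(Id(\mathscr{A}))$. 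These reduce the statement to a correspondence purely inside the lattice $\mathscr{A}$.

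For necessity, I would let $M$ be a fixed maximal ideal of $M(X)$ with $P:=\bigcup_{f\in M}\coz(f)\in\mathscr{A}$, and show this $P$ works. By Proposition \ref{M45-1}, $M=M^{J}$ for a prime ideal $J$ of $\mathscr{A}$ with $\bigcup J\subsetneq X$; by Remark \ref{prime=max}, $J$ is a maximal element of $Id(\mathscr{A})$. From $\bigcup_{f\in M^{J}}\coz(f)=\bigcup J$ we get $P=\bigcup J$, so $P\in\mathscr{A}$ and $P\subsetneq X=\top$. Since every $A\in J$ satisfies $A\subseteq\bigcup J=P$, we have $J\subseteq\downarrow P$, and $\downarrow P$ is a proper ideal of $\mathscr{A}$ (proper because $\top\not\subseteq P$), so maximality of $J$ forces $J=\downarrow P$. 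If $P$ were not a coatom, pick $R\in\mathscr{A}$ with $P<R<\top$; then $\downarrow P\subsetneq\downarrow R\subsetneq\mathscr{A}$ contradicts the maximality of $J=\downarrow P$. Hence $P$ is a maximal element of $\mathscr{A}$, therefore a prime element, and $M=M^{J}=M^{\downarrow P}=\{f:\coz(f)\subseteq P\}=M_{P}$.

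For sufficiency, I would take a prime element $P$ of $\mathscr{A}$ and set $M=M_{P}$. Because $\mathscr{A}$ is a Boolean algebra, $P$ prime forces $P$ to be a coatom: from $q\wedge q^{c}=\bot\leq P$, any $q$ with $P<q$ satisfies $q^{c}\leq P<q$, whence $q^{c}=\bot$ and $q=\top$. Consequently $\downarrow P$ is a maximal ideal of $\mathscr{A}$ (if $\downarrow P\subsetneq I\in Id(\mathscr{A})$, then $P\in I$ and some $a\in I$ has $a\not\leq P$, so $a\vee P=\top\in I$ and $I=\mathscr{A}$), so $\downarrow P\in Max(Id(\mathscr{A}))=\Sigma Id(\mathscr{A})$, and $\bigcup(\downarrow P)=P\subsetneq X$. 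Then $M=M_{P}=M^{\downarrow P}$, so by Proposition \ref{M45-1} $M$ is a fixed maximal ideal of $M(X)$, and $\bigcup_{f\in M}\coz(f)=\bigcup(\downarrow P)=P\in\mathscr{A}$. The step I expect to be the crux is getting this translation between the two meanings of ``prime'' straight — prime \emph{elements} of the Boolean algebra $\mathscr{A}$ versus prime (equivalently maximal) \emph{ideals} of $Id(\mathscr{A})$ — and, on the necessity side, recognizing that the extra hypothesis $\bigcup_{f\in M}\coz(f)\in\mathscr{A}$ is precisely what makes the associated maximal ideal of $\mathscr{A}$ principal (a $\sigma$-algebra is closed only under countable unions, so without this hypothesis the union need merely be a prime ideal of $\mathscr{A}$, which is the content of Proposition \ref{M45-1}).
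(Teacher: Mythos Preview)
Your proof is correct. The sufficiency direction is essentially the same as the paper's: both identify $M_P$ with $M^{\downarrow P}$, observe that $\downarrow P\in\Sigma Id(\mathscr{A})$, and invoke the earlier propositions (the paper uses Proposition~\ref{max} and then Proposition~\ref{M45-1}; you use Proposition~\ref{M45-1} directly).

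The necessity direction, however, genuinely differs from the paper's. The paper argues primality of $P:=\bigcup_{f\in M}\coz(f)$ directly: assuming $V\cap W\subseteq P$ with $V\not\subseteq P$ and $W\not\subseteq P$, it uses the maximality of $M$ to place $\chi_{X\setminus V}$ and $\chi_{X\setminus W}$ in $M$, whence $X\setminus V,\;X\setminus W\subseteq P$, and then $X=(X\setminus V)\cup(X\setminus W)\cup(V\cap W)\subseteq P$ gives the contradiction. You instead route everything through the ideal lattice of $\mathscr{A}$: invoke Proposition~\ref{M45-1} to get $M=M^{J}$ with $J$ maximal in $Id(\mathscr{A})$, use the hypothesis $P=\bigcup J\in\mathscr{A}$ to force $J=\downarrow P$, and then read off that $P$ is a coatom (hence prime) from the maximality of $\downarrow P$. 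Your argument is more structural and makes transparent exactly where the hypothesis $\bigcup_{f\in M}\coz(f)\in\mathscr{A}$ is used (to make the associated maximal ideal principal); the paper's argument is more hands-on with characteristic functions and does not explicitly pass through Proposition~\ref{M45-1} for this direction. Both are short, but yours is arguably cleaner in isolating the Boolean-algebra fact that principal maximal ideals of $\mathscr{A}$ correspond to prime elements.
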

\begin{proof}
\textit{Necessity.}
We claim that $P:=\bigcup_{f\in M}  \coz(f)$ is a
 prime element   of $\mathscr{A}$.
 If not, there exist $V,W\in  \mathscr{A}$ such that
 $V\cap W\subseteq P$,  $V\not \subseteq P$ and
 $V\not\subseteq P$, then $\chi_{_{X\setminus V}}, \chi_{_{X\setminus W}}\in M$ and this implies
 $X=(X\setminus V)\cup (X\setminus W)\cup (V\cap W) =P$,
 but this is a contradiction to the fact that
 $M$ is a fixed maximal ideal of $M(X)$, which proves the claim.
 By the maximality of $M$, we have $M=M_P$, since
 $M\subseteq M_P\subsetneq M(X)$.

\textit{Sufficiency.}
Consider $P\in \Sigma \mathscr{A}$.
From $\downarrow P\in  \Sigma Id(\mathscr{A})$,
we conclude from Proposition \ref{max} that
$M_P=M^{\downarrow P}$
is a maximal ideal in $M(X)$.
Since $\chi_{P}\in M^{\downarrow P}$, we conclude that
  $\bigcup_{f\in M^{\downarrow P}}  \coz(f)= P\subsetneq X$,
  which follows that $M_P$
is a fixed maximal ideal in $M(X)$,
by   Proposition \ref{M45-1}.
\end{proof}
As an immediate consequence we now have the following corollary.
\begin{corollary} \label{fixprim}
Let $(X, \mathscr{A})$ be a measurable space.
 For every subset $M$ of   $M(X)$  with
  $\bigcup_{f\in M} coz(f)\in \mathscr{A}$,
  $M$ is a fixed prime ideal of $M(X)$  
  if and only if   $M$ is 
   a fixed maximal ideal of $M(X)$. 
\end{corollary}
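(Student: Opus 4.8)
The plan is to deduce Corollary \ref{fixprim} almost immediately from Proposition \ref{fixmax} together with the equivalence of prime and semiprime ideals established in Proposition \ref{M110}. The key observation is that both directions are now essentially bookkeeping once we know the structural description of fixed maximal ideals of $M(X)$ whose cozero-union is measurable: such ideals are precisely the sets $M_P$ for $P\in\Sigma\mathscr{A}$.

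For the direction that a fixed maximal ideal is a fixed prime ideal, I would argue that every maximal ideal is in particular prime (the quotient $M(X)/M$ is a field, hence an integral domain), so this implication needs no new work; the hypothesis $\bigcup_{f\in M}\coz(f)\in\mathscr{A}$ is simply carried along. For the converse, suppose $M$ is a fixed prime ideal with $\bigcup_{f\in M}\coz(f)\in\mathscr{A}$. First I would note that $M$ being prime forces $M$ to be a $Z_{\mathscr{A}}$-ideal by Proposition \ref{M66} (every ideal is a $z$-ideal), and more to the point, by Proposition \ref{M110} the primeness is detectable through sign-nonchange on zero-sets of $Z[M]$. The goal is to show $M=M_P$ where $P:=\bigcup_{f\in M}\coz(f)$. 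One inclusion, $M\subseteq M_P$, is immediate from the definition of $P$. For the reverse inclusion I would show $P$ is a prime element of $\mathscr{A}$: if $V\cap W\subseteq P$ with $V,W\in\mathscr{A}$, then $\chi_{X\setminus V}\cdot\chi_{X\setminus W}$ has cozero set $(X\setminus V)\cap(X\setminus W)=X\setminus(V\cup W)$, and one checks that the product lies in $M$ — here primeness of $M$ (in the form of Proposition \ref{M110}(3), applied to suitable functions whose product vanishes) gives that one of $\chi_{X\setminus V},\chi_{X\setminus W}$ is in $M$, whence $X\setminus V\subseteq P$ or $X\setminus W\subseteq P$, i.e. $V\supseteq X\setminus P$ is not quite what is wanted, so I would instead phrase it so as to conclude $V\subseteq P$ or $W\subseteq P$ by combining $V\cap W\subseteq P$ with the containment just obtained. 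Once $P$ is prime, Proposition \ref{fixmax} (sufficiency direction) says $M_P$ is a fixed maximal ideal, and since $M\subseteq M_P\subsetneq M(X)$ with $M$ maximal... but wait, $M$ is only assumed prime, not maximal — so instead I would use that $M_P$ is maximal and $M\subseteq M_P$, together with $M$ being prime and the coincidence of prime and maximal among ideals with measurable cozero-union (which is exactly what Proposition \ref{fixmax} plus \ref{M45-1} deliver), to force $M=M_P$.

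The cleanest route, and the one I would actually write, avoids re-proving primeness of $P$ from scratch: since $M$ is a fixed prime ideal, extend it to a maximal ideal $M'$; then $\bigcap_{f\in M'}Z(f)\subseteq\bigcap_{f\in M}Z(f)\neq\emptyset$ shows $M'$ is also fixed, and one argues $\bigcup_{f\in M'}\coz(f)\in\mathscr{A}$ (this is where a small amount of care is needed — it should follow because $M'$ corresponds to a prime element of $\mathscr{A}$ via the fixed case, but establishing measurability of the cozero-union of $M'$ a priori is the delicate point). Then Proposition \ref{fixmax} gives $M'=M_{P'}$ for a prime element $P'$, and the goal reduces to showing $M=M_{P'}$, i.e. that the prime ideal $M$ squeezed inside the maximal ideal $M_{P'}$ actually equals it. This in turn follows from Proposition \ref{M110}: a prime ideal is determined by the $Z_{\mathscr{A}}$-ultrafilter-like trace it induces, and a prime ideal contained in $M_{P'}$ whose functions all have cozero inside $P'$ must, by the sign-nonchange criterion and maximality considerations, be all of $M_{P'}$.

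The main obstacle I anticipate is precisely the measurability of $\bigcup_{f\in M}\coz(f)$ being preserved (or re-derived) when one passes from the prime ideal $M$ to a containing maximal ideal, or equivalently, showing directly that the prime element $P$ extracted from $M$ satisfies $M_P\subseteq M$ rather than merely $M\subseteq M_P$. The reverse inclusion $M_P\subseteq M$ is the crux: it says that if $\coz(f)\subseteq P=\bigcup_{g\in M}\coz(g)$ then $f\in M$, and since $M$ is a $Z_{\mathscr{A}}$-ideal (Proposition \ref{M66}), it suffices to find a single $g\in M$ with $Z(g)\subseteq Z(f)$, i.e. $\coz(f)\subseteq\coz(g)$ — but $\coz(f)$ being contained in a \emph{union} of cozero sets of members of $M$ does not immediately give containment in one of them unless we use compactness-type or primeness arguments. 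Resolving this is exactly where primeness of $M$ (as opposed to an arbitrary ideal) must be invoked, via Proposition \ref{M110}, and I expect that to be the heart of the proof, even though the final corollary statement looks deceptively short.
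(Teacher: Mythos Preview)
Your proposal is far more elaborate than necessary, and the gaps you yourself flag (measurability of the cozero-union after passing to a larger maximal ideal; the reverse inclusion $M_P\subseteq M$) are real obstacles on the route you chose. The paper, by contrast, records Corollary~\ref{fixprim} with no proof at all, calling it ``an immediate consequence'' of Proposition~\ref{fixmax}. The reason it is immediate is that in $M(X)$ \emph{every} prime ideal is already maximal, so the hypothesis on $\bigcup_{f\in M}\coz(f)$ and the reference to Proposition~\ref{fixmax} are essentially decorative. This fact is what you are missing.

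Here is the one-line argument you overlooked, built entirely from tools the paper has established. Let $M$ be a prime ideal of $M(X)$ and take $f\notin M$. Since $f\cdot\chi_{Z(f)}=\mathbf 0$ and $M$ is prime, $\chi_{Z(f)}\in M$. Using the function $h$ from the proof of Proposition~\ref{M66} (namely $h=1/f$ on $\coz(f)$ and $0$ elsewhere), one has $\chi_{Z(f)}+hf=\mathbf 1$, so $\mathbf 1\in M+(f)$ and $M$ is maximal. (Equivalently: $M(X)$ is von Neumann regular, a fact Hager proved and which the construction in Proposition~\ref{M66} reproduces, and in any von Neumann regular ring prime ideals are maximal.) With this in hand, ``fixed prime $\Leftrightarrow$ fixed maximal'' is a tautology, and your worries about measurability of cozero-unions, about showing $P\in\Sigma\mathscr A$, and about the reverse inclusion $M_P\subseteq M$ all evaporate.
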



A measurable space $(X, \mathscr{A})$ is called a {\it compact measurable space} if $\mathscr{A}$ is a compact   lattice.

\begin{definition} \label{M75}
Let $\mathcal K$ be a nonempty family of
sets. $\mathcal K$ is said to have the  {\it finite intersection property}
provided that the intersection of any finite  number of
members of $\mathcal K$ is nonempty.
\end{definition}
\begin{remark} \label{M80}
Let $(X, \mathscr{A})$ be a measurable space.
It is evident that every subfamily of $\mathscr{A}$ with the
finite intersection property is contained in some $Z_{\mathscr{A}}$-ultrafilter.
\end{remark}

\begin{proposition}  \label{M85}
Let $(X, \mathscr{A})$ be a measurable space. Then    $(X, \mathscr{A})$ is a compact measurable space 
if and only if for every $\mathcal K\subseteq \mathscr{A}$, if  $\mathcal K$  have the finite intersection property, then $\bigcap\mathcal{K}\not=\emptyset.$
\end{proposition}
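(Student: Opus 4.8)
The plan is to prove the characterization of compact measurable spaces via the finite intersection property by translating ``compactness of the lattice $\mathscr{A}$'' into a statement about covers and then dualizing via complements. Recall that $\mathscr{A}$ is compact as a bounded lattice means: whenever $X = \top = \bigvee S$ for some $S \subseteq \mathscr{A}$, there is a finite $T \subseteq S$ with $X = \bigvee T$. Since joins in $\mathscr{A}$ are unions and the top element is $X$, this says precisely that every cover of $X$ by measurable sets admits a finite subcover.

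\emph{Necessity.} Suppose $(X,\mathscr{A})$ is compact and let $\mathcal{K}\subseteq\mathscr{A}$ have the finite intersection property. I would argue by contradiction: if $\bigcap\mathcal{K}=\emptyset$, then taking complements, $\bigcup_{K\in\mathcal{K}}(X\setminus K) = X\setminus\bigcap\mathcal{K} = X$. Since each $X\setminus K\in\mathscr{A}$ (as $\mathscr{A}$ is a $\sigma$-algebra), this is a cover of $X$ by members of $\mathscr{A}$, so by compactness there is a finite $\{K_1,\dots,K_n\}\subseteq\mathcal{K}$ with $\bigcup_{i=1}^n(X\setminus K_i)=X$. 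Complementing again gives $\bigcap_{i=1}^n K_i=\emptyset$, contradicting the finite intersection property. Hence $\bigcap\mathcal{K}\neq\emptyset$.

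\emph{Sufficiency.} Conversely, suppose every subfamily of $\mathscr{A}$ with the finite intersection property has nonempty intersection, and let $S\subseteq\mathscr{A}$ with $\bigcup S = X$; I must produce a finite subfamily whose union is still $X$. Put $\mathcal{K}=\{X\setminus A : A\in S\}\subseteq\mathscr{A}$. Then $\bigcap\mathcal{K} = X\setminus\bigcup S = \emptyset$, so by hypothesis (contrapositive) $\mathcal{K}$ does \emph{not} have the finite intersection property, i.e.\ there exist $A_1,\dots,A_n\in S$ with $\bigcap_{i=1}^n(X\setminus A_i)=\emptyset$, equivalently $\bigcup_{i=1}^n A_i = X = \top$. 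Thus $\top$ is compact in $\mathscr{A}$, so $(X,\mathscr{A})$ is a compact measurable space.

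The argument is essentially a routine complement-duality between covers and families with the finite intersection property, so there is no serious obstacle; the only point requiring a little care is making sure the definitions line up, namely that ``$\mathscr{A}$ is a compact lattice'' is being used in the sense of Section~\ref{2} (top element $\top=X$ is a compact element, with joins interpreted as unions) and that complements of members of $\mathscr{A}$ again lie in $\mathscr{A}$, which is exactly property (ii) of a $\sigma$-algebra. One could also phrase sufficiency directly without the contrapositive by noting that the finite intersection property fails for $\mathcal{K}$ precisely because its full intersection is empty, but the contrapositive formulation is cleaner.
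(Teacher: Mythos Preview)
Your proof is correct and follows essentially the same complement-duality argument as the paper: both directions proceed by passing between covers $\{A_\lambda\}$ with $\bigcup A_\lambda = X$ and families $\{A_\lambda^c\}$ with empty intersection, using that $\mathscr{A}$ is closed under complements. The only cosmetic difference is that in the sufficiency direction the paper argues by contradiction (assume no finite subcover, derive the finite intersection property for the complements) whereas you invoke the contrapositive directly, but as you yourself note this is the same argument.
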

\begin{proof}
\textit{Necessity.}   Let $\mathcal K$  be any family of
measurable subsets of $X$ with the finite intersection property.
If $\bigcap\mathcal{K} =\emptyset$,
then $\bigcup_{A\in \mathcal{K}}A^c=X$ and from $\{A^c : A\in \mathcal{K}\}\subseteq \mathscr{A}$,
we conclude that there exist $A_1,\ldots ,A_n\in \mathcal{K}$ such that $\bigcup_{i=1}^nA_i^c=X$, by hypothesis.
Then $\bigcap_{i=1}^nA_i=\emptyset$, and
this is a contradiction to the fact that
$\mathcal K$ have  the finite intersection property.

\textit{Sufficiency.} 
Let $\mathcal{B}\subseteq \mathscr{A}$ such that $X=\bigcup \mathcal{B}$. Suppose that for every finite subset $\mathcal{A}$ of  $\mathcal{B}$, $X\not=\bigcup \mathcal{A}$, then $\mathcal K:=\{B^c:B\in  \mathcal{B}\}\subseteq \mathscr{A}$   have the finite intersection property, but $\bigcap\mathcal{K}=\emptyset$, and
this is a contradiction to statement (2).
Hence there exists a  finite subset $\mathcal{A}$ of  $\mathcal{B}$ such that
$X=\bigcup \mathcal{A}$.
Therefore, $(X, \mathscr{A})$ is a compact measurable space.
\end{proof}

Our next result shows that
compact measurable spaces admit a simple characterization in terms of fixed ideals
and fixed $Z_{\mathscr{A}}$-filters.
\begin{proposition}  \label{M95}
Let $(X, \mathscr{A})$ be a measurable space. The following statements are equivalent.
\begin{enumerate}
\item[{\rm (1)}]   $(X, \mathscr{A})$ is compact.
\item[{\rm (2)}]  Every proper ideal in $ M(X)$ is fixed.
\item[{\rm (3)}]  Every maximal ideal in $ M(X)$ is fixed.
\item[{\rm (4)}] Every $Z_{\mathscr{A}}$-filter in $\mathscr{A}$ is fixed.
\item[{\rm (5)}] Every $Z_{\mathscr{A}}$-ultrafilter in $\mathscr{A}$ is fixed.
\end{enumerate}
\end{proposition}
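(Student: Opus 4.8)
The plan is to establish the cycle $(1)\Rightarrow(2)\Rightarrow(3)\Rightarrow(1)$ among the ideal-theoretic conditions, and separately link these to the $Z_{\mathscr{A}}$-filter conditions via the correspondence of Propositions \ref{M30} and \ref{M35}. First I would prove $(1)\Rightarrow(2)$: given a proper ideal $I$ in $M(X)$, the family $Z[I]=\{Z(f):f\in I\}$ is a $Z_{\mathscr{A}}$-filter on $X$ by Proposition \ref{M30}(1); in particular it is a proper filter of $\mathscr{A}$, so it has the finite intersection property (indeed it is closed under finite intersections and omits $\emptyset$). Since $(X,\mathscr{A})$ is compact, Proposition \ref{M85} gives $\bigcap_{f\in I}Z(f)=\bigcap Z[I]\neq\emptyset$, i.e.\ $I$ is fixed. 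The implication $(2)\Rightarrow(3)$ is immediate, since every maximal ideal is a proper ideal.

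Next I would prove $(3)\Rightarrow(1)$ by contraposition. Suppose $(X,\mathscr{A})$ is not compact. By Proposition \ref{M85} there is a family $\mathcal K\subseteq\mathscr{A}$ with the finite intersection property but $\bigcap\mathcal K=\emptyset$. By Remark \ref{M80}, $\mathcal K$ is contained in some $Z_{\mathscr{A}}$-ultrafilter $\mathcal F$, and since $\bigcap\mathcal K=\emptyset$ we also have $\bigcap\mathcal F=\emptyset$, so $\mathcal F$ is a free $Z_{\mathscr{A}}$-ultrafilter. Then $Z^{-1}[\mathcal F]$ is a maximal ideal in $M(X)$ by Proposition \ref{M35}(2), and $\bigcap_{f\in Z^{-1}[\mathcal F]}Z(f)=\bigcap\mathcal F=\emptyset$ (using $Z_{\mathscr{A}}[X]=\mathscr{A}$ from Proposition \ref{M15} together with the fact that $\mathcal F\subseteq Z[Z^{-1}[\mathcal F]]$), so this is a free maximal ideal, contradicting $(3)$. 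This closes the loop $(1)\Leftrightarrow(2)\Leftrightarrow(3)$.

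It remains to handle $(4)$ and $(5)$. For $(1)\Leftrightarrow(4)$ I would argue directly from Proposition \ref{M85}: a $Z_{\mathscr{A}}$-filter is a proper filter of $\mathscr{A}$, hence has the finite intersection property, so compactness forces $\bigcap\mathcal F\neq\emptyset$ for every such $\mathcal F$; conversely, if $(X,\mathscr{A})$ is not compact, the family $\mathcal K$ produced above generates (via finite intersections together with upward closure in $\mathscr{A}$) a free $Z_{\mathscr{A}}$-filter. The implication $(4)\Rightarrow(5)$ is trivial. Finally $(5)\Rightarrow(4)$: every $Z_{\mathscr{A}}$-filter is contained in a $Z_{\mathscr{A}}$-ultrafilter, and a filter contained in a fixed filter is fixed, since $\mathcal F\subseteq\mathcal F'$ implies $\bigcap\mathcal F\supseteq\bigcap\mathcal F'\neq\emptyset$.

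I expect no genuinely hard step here; the content is essentially a repackaging of the Gillman--Jerison argument for $C(X)$ into the $\sigma$-algebra setting, with Proposition \ref{M85} doing the work that compactness via open covers does classically. The one place to be careful is the bookkeeping in $(3)\Rightarrow(1)$ and $(1)\Rightarrow(4)$: one must make sure that the free subfamily $\mathcal K$ really does sit inside a \emph{free} ultrafilter (not merely inside some ultrafilter), which follows because $\bigcap\mathcal K=\emptyset$ already forces the intersection of any larger family to be empty. Everything else is a direct application of Propositions \ref{M15}, \ref{M30}, \ref{M35}, \ref{M85} and Remark \ref{M80}.
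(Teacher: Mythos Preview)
Your proof is correct and follows essentially the same route as the paper: the cycle $(1)\Rightarrow(2)\Rightarrow(3)\Rightarrow(1)$ via Propositions~\ref{M30}, \ref{M85}, and Remark~\ref{M80}, with $(3)\Rightarrow(1)$ phrased contrapositively rather than directly. The only organizational difference is that the paper dispatches $(4)$ and $(5)$ more economically by noting that $(2)\Leftrightarrow(4)$ and $(3)\Leftrightarrow(5)$ are immediate from the ideal--filter correspondences of Propositions~\ref{M30} and~\ref{M35}, whereas you establish $(1)\Leftrightarrow(4)$ and $(4)\Leftrightarrow(5)$ separately.
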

\begin{proof}
The equivalence of (2) with (4)  and (3) with (5)  are evident, by Propositions \ref{M30} and \ref{M35}.

(1)$\Rightarrow$(2).
Let $I$ be a proper ideal in $ M(X)$.
Then, by Proposition \ref{M30}, $Z[I]$ is  $Z_{\mathscr{A}}$-filter in $\mathscr{A}$.
Since  $Z[I]$ have  the finite intersection property, we conclude from Proposition \ref{M80} that  $\bigcap Z[I]\not=\emptyset$,
which follows that $I$  is a fixed ideal.

(2)$\Rightarrow$(3).
 Evident.

(3)$\Rightarrow$(1).
Suppose that $\mathcal K\subseteq \mathscr{A}$
  have the finite intersection property.
Then, by Remark \ref{M80},
there exists a $Z_{\mathscr{A}}$-altrafilter
$\mathcal{F}$ such that $\mathcal K\subseteq \mathcal{F}$.
Therefore, by Proposition \ref{M35},
$\emptyset\not=\bigcap\mathcal{F}\subseteq \bigcap\mathcal{K}$ and we conclude from
  Proposition \ref{M85} that $(X, \mathscr{A})$ is compact.
\end{proof}
\section{$T$-measurable}  \label{5}
In this section, we show that
 in the study of rings of measurable functions
 on a measurable space
there is no need to deal with measurable spaces that are not $T$-measurable.
\begin{definition}\label{M260}
Let $X$  be an abstract set, and consider an arbitrary
subfamily $C $ of $\mathbb{R}^X$.
The  {\it weak measurable space induced by $C $} on $X$
is defined
to be the smallest $\sigma$-algebra in $X$ such that all functions in $C $ are
measurable.
\end{definition}
Let $C \subseteq\mathbb{R}^X$ and
$\mathcal{A}:=\{f^{-1}(O): f\in C,\,  O\in \mathfrak{O}(\mathbb{R})\}$.
Then $(X,  <\mathcal{A}>)$ is the weak measurable space induced by $C$ on $X$.
\begin{lemma}\label{M270}
Let $(X,\mathscr{A})$  be a measurable space, $Y\not=\emptyset$ and $\mathcal{A}\subseteq \mathcal{P}(Y)$.
Suppose that $f:X\rightarrow Y$  is a function and
 $<\mathcal{A}>$ is the   $\sigma$-algebra generated by $\mathcal{A}$ on $Y$.
Then
$$\{f^{-1}(A) : A\in \mathcal{A}\}
\subseteq
\mathscr{A}$$
 if and only if
$$\{f^{-1}(B) : B\in <\mathcal{A}>\}
\subseteq
\mathscr{A}.$$
\end{lemma}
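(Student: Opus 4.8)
The plan is to prove the equivalence of the two displayed inclusions by elementary set-theoretic reasoning about preimages, using the fact that $<\mathcal{A}>$ is characterized as the smallest $\sigma$-algebra on $Y$ containing $\mathcal{A}$ (this characterization is recalled in Section~\ref{2}, citing \cite[1.10 Theorem]{Rudin1987}).

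The nontrivial direction is the forward implication, so I would handle it first. Assume $\{f^{-1}(A) : A\in \mathcal{A}\}\subseteq \mathscr{A}$. The key device is to consider the family
\[
\mathscr{B}:=\{B\subseteq Y : f^{-1}(B)\in \mathscr{A}\}.
\]
I would then verify that $\mathscr{B}$ is a $\sigma$-algebra on $Y$: it contains $Y$ because $f^{-1}(Y)=X\in\mathscr{A}$; it is closed under complements because $f^{-1}(B^c)=X\setminus f^{-1}(B)$ and $\mathscr{A}$ is closed under complements; and it is closed under countable unions because $f^{-1}\big(\bigcup_{n}B_n\big)=\bigcup_{n}f^{-1}(B_n)$ and $\mathscr{A}$ is closed under countable unions. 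By the hypothesis, $\mathcal{A}\subseteq\mathscr{B}$. Since $<\mathcal{A}>$ is the smallest $\sigma$-algebra on $Y$ containing $\mathcal{A}$, it follows that $<\mathcal{A}>\subseteq\mathscr{B}$, which is precisely the statement that $f^{-1}(B)\in\mathscr{A}$ for every $B\in<\mathcal{A}>$, i.e. $\{f^{-1}(B):B\in<\mathcal{A}>\}\subseteq\mathscr{A}$.

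The reverse implication is immediate: since $\mathcal{A}\subseteq<\mathcal{A}>$, any inclusion valid for all $B\in<\mathcal{A}>$ specializes to all $A\in\mathcal{A}$; formally, $\{f^{-1}(A):A\in\mathcal{A}\}\subseteq\{f^{-1}(B):B\in<\mathcal{A}>\}\subseteq\mathscr{A}$.

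I do not anticipate a genuine obstacle here; the only point requiring a moment's care is the verification that $\mathscr{B}$ is a $\sigma$-algebra, which rests entirely on the standard commutation of preimages with complements and with countable unions together with the three defining closure properties of $\mathscr{A}$. Everything else is a direct appeal to the minimality characterization of $<\mathcal{A}>$.
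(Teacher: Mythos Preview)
Your argument is correct and is essentially the same as the paper's: both rely on the commutation of $f^{-1}$ with complements and countable unions, the paper merely stating these identities and declaring the result evident, while you make the standard ``good sets'' argument explicit by showing $\mathscr{B}=\{B\subseteq Y: f^{-1}(B)\in\mathscr{A}\}$ is a $\sigma$-algebra containing $\mathcal{A}$. There is no substantive difference in approach, only in the level of detail provided.
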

\begin{proof}
It is evident, because $f^{-1}(\bigcup_{\lambda\in \Lambda}B_{\lambda
})=\bigcup_{\lambda\in \Lambda}f^{-1}(B_{\lambda
})$ , $f^{-1}(\bigcap_{\lambda\in \Lambda}B_{\lambda
})=\bigcap_{\lambda\in \Lambda}f^{-1}(B_{\lambda
})$  and
$f^{-1}(B^{c})
=(f^{-1}(B))^{c}$,
for every $\{B_{\lambda
}\}_{\lambda\in \Lambda}\subseteq \mathcal{P}(Y)$
 and every $B\in  \mathcal{P}(Y)$.
\end{proof}

\begin{proposition} \label{M280}
Let $(X,\mathscr{A})$ be a measurable space,
$Y\not=\emptyset $ and $C\subseteq
\mathbb{R}^Y$.
Let $\mathscr{A}^{\prime}$ be
 the weak measurable space induced by $C$ on $Y$
 and  $f:X\rightarrow Y$ be a function.
 Then for every  $g\in C$, $g\circ f\in M(X)$
 if and only if
 for every $A\in\mathscr{A}^{\prime}$,
$f^{-1}(A)\in\mathscr{A}$.
\end{proposition}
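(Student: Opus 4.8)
The plan is to prove the two implications separately, using Lemma \ref{M270} as the main tool to pass between the generating family $\mathcal{A} = \{g^{-1}(O) : g \in C,\ O \in \mathfrak{O}(\mathbb{R})\}$ of $\mathscr{A}'$ and the whole $\sigma$-algebra $\mathscr{A}' = \langle \mathcal{A} \rangle$.

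\emph{Sufficiency} (the easy direction): assume $f^{-1}(A) \in \mathscr{A}$ for every $A \in \mathscr{A}'$. Fix $g \in C$; I want to show $g \circ f \in M(X)$, i.e.\ that $(g\circ f)^{-1}(O) \in \mathscr{A}$ for every open $O \subseteq \mathbb{R}$. But $(g \circ f)^{-1}(O) = f^{-1}(g^{-1}(O))$, and $g^{-1}(O) \in \mathcal{A} \subseteq \mathscr{A}'$ since $g \in C$, so by hypothesis $f^{-1}(g^{-1}(O)) \in \mathscr{A}$. Hence $g \circ f$ is measurable.

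\emph{Necessity} (the direction where Lemma \ref{M270} does the work): assume $g \circ f \in M(X)$ for every $g \in C$. First I observe that for each $g \in C$ and each open $O \subseteq \mathbb{R}$, the set $f^{-1}(g^{-1}(O)) = (g\circ f)^{-1}(O)$ lies in $\mathscr{A}$ because $g \circ f$ is measurable; since every member of $\mathcal{A}$ has the form $g^{-1}(O)$ with $g \in C$, $O \in \mathfrak{O}(\mathbb{R})$, this says precisely that $\{f^{-1}(A) : A \in \mathcal{A}\} \subseteq \mathscr{A}$. Now apply Lemma \ref{M270} with this $\mathcal{A}$ and $Y$: the hypothesis $\{f^{-1}(A) : A \in \mathcal{A}\} \subseteq \mathscr{A}$ is equivalent to $\{f^{-1}(B) : B \in \langle \mathcal{A} \rangle\} \subseteq \mathscr{A}$, and $\langle \mathcal{A} \rangle = \mathscr{A}'$ by the definition of the weak measurable space recalled just before Lemma \ref{M270}. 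Therefore $f^{-1}(A) \in \mathscr{A}$ for every $A \in \mathscr{A}'$, as required.

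I do not anticipate a genuine obstacle here: the entire content is the reduction from the $\sigma$-algebra $\mathscr{A}'$ to its generating family $\mathcal{A}$, and that reduction has already been isolated as Lemma \ref{M270}. The only point requiring a little care is to state clearly that $\mathcal{A} = \{g^{-1}(O) : g \in C,\ O \in \mathfrak{O}(\mathbb{R})\}$ is exactly the family used to define $\mathscr{A}'$ (as recorded in the sentence following Definition \ref{M260}), so that Lemma \ref{M270} applies verbatim; once that identification is made the proof is a two-line application in each direction.
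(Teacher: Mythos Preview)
Your proof is correct and follows essentially the same approach as the paper: both directions are handled exactly as in the paper's proof, with the sufficiency verified directly via $(g\circ f)^{-1}(O)=f^{-1}(g^{-1}(O))$ and the necessity reduced to the generating family $\mathcal{A}=\{g^{-1}(O):g\in C,\ O\in\mathfrak{O}(\mathbb{R})\}$ and then extended to all of $\mathscr{A}'=\langle\mathcal{A}\rangle$ by Lemma~\ref{M270}. Your write-up is simply more explicit about the identification $\mathscr{A}'=\langle\mathcal{A}\rangle$ than the paper's, which invokes Definition~\ref{M260} and Lemma~\ref{M270} in a single sentence.
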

\begin{proof}
\textit{Necessity.}
Let $O\in \mathfrak{O}(\mathbb{R})$,
then $f^{-1}(g^{-1}(O))\in \mathscr{A}$.
Therefore, By Definition \ref{M260} and
Lemma \ref{M270},  $f^{-1}(A)\in\mathscr{A}$
for every $A\in\mathscr{A}^{\prime}$.

\textit{Sufficiency.}
  Let  $O\in \mathfrak{O}(\mathbb{R})$,
then $g^{-1}(O) \in\mathscr{A}^{\prime}$,
which follows that $f^{-1}(g^{-1}(O))\in \mathscr{A}$. Therefore, $g\circ f\in M(X)$ for every  $g\in C$.
 \end{proof}

\begin{definition}   \label{M100} M100
A measurable space $(X, \mathscr{A})$ is said to be  {\it $T$-measurable}  if whenever $x$ and $y$ are distinct points in $X$, there is a measurable set containing one
and not the other (see \cite{Willard1970}).
\end{definition}
\begin{lemma}  \label{M285}
Let $(X,\mathscr{A})$ be a measurable space.
Define $x \sim x^{\prime}$ in $X$ to mean that $f(x) = f(x^{\prime})$ for every $f\in M(X)$.
Then the following statements hold.
\begin{enumerate}
\item[{\rm (1)}] The relation $\sim$ is an equivalence relation.
\item[{\rm (2)}]  If $  A\in \mathscr{A}$ then $A=\bigcup_{x\in A}[x]_{\sim}$.
\item[{\rm (3)}]  With each $f \in M(X)$), associate a function $h_f \in\mathbb{R}^{X/\sim}$ given by
$h_f([x]_{\sim})=f(x)$.
If  $\mathscr{A}_{X/\sim}$ is the
  weak measurable space induced by $\{h_f : f \in M(X)\} $ on ${X/\sim}$,
  then $(X/\sim,\mathscr{A}_{X/\sim})$ is
   $T$-measurable  space.
\end{enumerate}
\end{lemma}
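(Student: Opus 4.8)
The plan is to verify the three assertions in order, since each is essentially a routine unwinding of the definitions, with (3) the only one requiring a genuine idea.

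For (1), I would simply observe that $x\sim x'$ is defined by the condition $f(x)=f(x')$ for all $f\in M(X)$; reflexivity, symmetry and transitivity are immediate from the corresponding properties of equality in $\mathbb{R}$, so nothing measurable-theoretic is needed. For (2), let $A\in\mathscr{A}$. The inclusion $A\subseteq\bigcup_{x\in A}[x]_\sim$ is trivial since $x\in[x]_\sim$. For the reverse inclusion, suppose $y\in[x]_\sim$ for some $x\in A$. By Remark \ref{chi} the characteristic function $\chi_{_A}$ lies in $M(X)$, and $\chi_{_A}(x)=1$ because $x\in A$; since $y\sim x$ we get $\chi_{_A}(y)=\chi_{_A}(x)=1$, hence $y\in A$. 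Thus $\bigcup_{x\in A}[x]_\sim\subseteq A$, and the two sets coincide. (This step also shows that the saturation of a measurable set is measurable, which is exactly what makes the quotient construction well-behaved.)

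For (3), the first point to check is that each $h_f$ is well-defined: if $[x]_\sim=[x']_\sim$ then $x\sim x'$, so $f(x)=f(x')$ and the formula $h_f([x]_\sim)=f(x)$ is unambiguous. Then $\mathscr{A}_{X/\sim}$ is by Definition \ref{M260} the smallest $\sigma$-algebra on $X/\sim$ making every $h_f$ measurable, so $(X/\sim,\mathscr{A}_{X/\sim})$ is automatically a measurable space; it remains only to verify the separation property of Definition \ref{M100}. Let $[x]_\sim\neq[x']_\sim$ be distinct points of $X/\sim$. By definition of $\sim$, the inequality $[x]_\sim\neq[x']_\sim$ means $x\not\sim x'$, i.e.\ there exists $f\in M(X)$ with $f(x)\neq f(x')$. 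Put $r=f(x)$ and choose an open set $O\subseteq\mathbb{R}$ with $r\in O$ and $f(x')\notin O$ (for instance a small interval around $r$). Then $h_f^{-1}(O)$ is a measurable set in $X/\sim$ — it belongs to $\mathscr{A}_{X/\sim}$ precisely because $h_f$ is measurable with respect to that $\sigma$-algebra — and it contains $[x]_\sim$ since $h_f([x]_\sim)=f(x)=r\in O$, while it excludes $[x']_\sim$ since $h_f([x']_\sim)=f(x')\notin O$. This exhibits a measurable set containing one of the two points and not the other, so $(X/\sim,\mathscr{A}_{X/\sim})$ is $T$-measurable.

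The main (and only) subtlety is bookkeeping in (3): one must be careful that the separating measurable set is genuinely produced inside $\mathscr{A}_{X/\sim}$ rather than in some larger $\sigma$-algebra. This is handled by noting that $h_f^{-1}(O)$ with $O$ open is, by the very definition of the weak measurable space induced by $\{h_f:f\in M(X)\}$, a member of the generating family $\mathcal{A}=\{h_f^{-1}(O):f\in M(X),\,O\in\mathfrak{O}(\mathbb{R})\}$, hence lies in $\langle\mathcal{A}\rangle=\mathscr{A}_{X/\sim}$; no further closure operations are invoked. Everything else is a direct consequence of the definitions and of the fact, recorded in Remark \ref{chi}, that characteristic functions of measurable sets are themselves real measurable functions.
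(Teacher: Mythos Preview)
Your proof is correct and follows essentially the same route as the paper: part (1) is dismissed as trivial, part (2) shows measurable sets are $\sim$-saturated (you use $\chi_{_A}$ via Remark~\ref{chi} where the paper invokes Proposition~\ref{M15} to get $f$ with $Z(f)=A$, but these are equivalent), and part (3) separates distinct classes via $h_f^{-1}(O)$ for a suitable open $O$. The only cosmetic difference is that the paper produces two disjoint measurable neighborhoods in (3) using intervals of radius $|h_f([x])-h_f([x'])|/3$, whereas you produce a single separating set; since Definition~\ref{M100} only requires one, your version suffices.
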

\begin{proof}
(1).  It is evident.

(2). Consider  $x\in A\in \mathscr{A}$,
then, by Proposition \ref{M15},
there exists $f\in  M(X)$ such that $z(f)=A$.
 If $y\in [x]$, then $g(x)=g(y)$ for all $g\in
 {M}(X)$, which follows that  $f(y)=f(x)=0$ and hence $y\in z(f)=A$.

 (3). By the statement (2),  $(X/\sim,\mathscr{A}_{X/\sim})$ is
   measurable space.
Consider  $[x], [x^{\prime}]\in X/\sim$ with $[x]\not=[x^{\prime}]$.
 Then there exists $f\in M(X)$ such that $f(x)\not=f(x^{\prime})$, which follows that $h_f([x])\not=h_f([x^{\prime}])$.
 Consider $$r:=\dfrac{|h_f([x])-h_f([x^{\prime}])|}{3}.$$
  Hence, by Definition \ref{M260},
  $$[x]\in h_f^{-1}(h_f([x])-r,h_f([x])+r) \in\mathscr{A}_{X/\sim},$$
 $$[x^{\prime}]\in h_f^{-1}(h_f([x^{\prime}])-r,h_f([x^{\prime}])+r)\in\mathscr{A}_{X/\sim}$$
 and
$$ h_f^{-1}(h_f([x])-r,h_f([x])+r)\cap h_f^{-1}(h_f([x^{\prime}])-r,h_f([x^{\prime}])+r)=\emptyset.$$
 Therefore $(X/\sim,\mathscr{A}_{X/\sim})$ is
 $T$-measurable space.
 \end{proof}
The next proposition eliminates any reason for considering rings
of real measurable functions on other than $T$-measurable spaces.
\begin{proposition}  \label{M290}
For every measurable space $(X,\mathscr{A})$ there
is a $T$-measurable space
 $(Y,\mathscr{A}^{\prime})$
 and a onto function  $\theta:X\rightarrow Y$
  such that
   $\eta: M(Y)\rightarrow M(X)$
 given by  $g\mapsto g\circ\theta$
 is an isomorphism
  and    the following statements hold.
\begin{enumerate}
\item[{\rm (1)}]  For every
 $A\in\mathscr{A}$, $\theta(A)\in \mathscr{A}^{\prime}$.
\item[{\rm (2)}]  For every
 $B\in\mathscr{A}^{\prime}$, $\theta^{-1}(B)\in \mathscr{A}$.
\end{enumerate}
  \end{proposition}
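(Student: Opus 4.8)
The plan is to take $Y := X/\!\sim$ with the quotient map $\theta : X \to Y$ and the measurable structure $\mathscr{A}' := \mathscr{A}_{X/\sim}$ furnished by Lemma~\ref{M285}, and then verify the three assertions: that $\theta$ is surjective (immediate, it is a quotient map), that $\eta$ is a ring isomorphism, and the two transfer properties (1) and (2). The surjectivity and the fact that $(Y,\mathscr{A}')$ is $T$-measurable are exactly Lemma~\ref{M285}(3), so the work lies in the isomorphism claim and the two items.

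First I would establish (2). Since $\mathscr{A}'$ is by definition the weak measurable space induced by $C := \{h_f : f \in M(X)\}$ on $Y$, and each composite $h_f \circ \theta = f$ lies in $M(X)$ (by the very definition of $h_f$ in Lemma~\ref{M285}(3)), Proposition~\ref{M280} applied with this $C$ and the function $\theta$ gives precisely that $\theta^{-1}(B) \in \mathscr{A}$ for every $B \in \mathscr{A}'$; this is (2). The same remark shows $\theta$ is measurable, so $g \circ \theta \in M(X)$ for every $g \in M(Y)$ — hence $\eta$ is well-defined. That $\eta$ is a ring homomorphism is routine from the pointwise operations. Injectivity of $\eta$: if $g \circ \theta = 0$ then, because $\theta$ is onto, $g = 0$. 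Surjectivity of $\eta$: given $f \in M(X)$, the function $h_f \in \mathbb{R}^Y$ is measurable — indeed for any open $O \subseteq \mathbb{R}$ we have $h_f^{-1}(O) \in \mathcal{A} \subseteq \mathscr{A}'$ directly from Definition~\ref{M260} — so $h_f \in M(Y)$ and $\eta(h_f) = h_f \circ \theta = f$. Thus $\eta$ is an isomorphism.

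It remains to prove (1): $\theta(A) \in \mathscr{A}'$ for every $A \in \mathscr{A}$. By Lemma~\ref{M285}(2), $A$ is saturated with respect to $\sim$, i.e. $A = \bigcup_{x \in A}[x]_\sim = \theta^{-1}(\theta(A))$. By Proposition~\ref{M15} there is $f \in M(X)$ with $Z(f) = A^c$ (equivalently pick $f = \chi_{A^c}$, using Remark~\ref{chi}), so $A = Z(f) = \theta^{-1}(Z(h_f))$ since $h_f \circ \theta = f$; and because $A$ is saturated, $\theta(A) = Z(h_f)$. As $h_f \in M(Y)$, its zero-set $Z(h_f)$ lies in $\mathscr{A}' = Z_{\mathscr{A}'}[Y]$ by Proposition~\ref{M15} again (applied on $(Y,\mathscr{A}')$). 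Hence $\theta(A) \in \mathscr{A}'$, proving (1).

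The main obstacle is keeping the bookkeeping of ``which $\sigma$-algebra'' straight when invoking Proposition~\ref{M280}: one must check that the hypothesis ``$h_f \circ \theta \in M(X)$ for every $h_f \in C$'' genuinely holds, which reduces to the tautology $h_f \circ \theta = f$ built into the construction of $h_f$. Once that is in hand, items (1) and (2) are the two directions packaged by Propositions~\ref{M15} and~\ref{M280}, and the isomorphism claim is soft. No essential difficulty remains beyond this identification.
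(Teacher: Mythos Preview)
Your proof is correct and follows essentially the same route as the paper's own argument: the quotient construction from Lemma~\ref{M285}, Proposition~\ref{M280} for item~(2), and Proposition~\ref{M15} (realizing $A$ as a zero-set and pushing it forward to $Z(h_f)$) for item~(1). One small slip to fix: in your proof of~(1) you write ``$Z(f)=A^c$'' but you mean $Z(f)=A$, consistent with your choice $f=\chi_{A^c}$ and with the very next clause ``$A=Z(f)$''.
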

\begin{proof}
Suppose $\sim$ is the same equivalence relation
Lemma \ref{M285}.
We put $Y:=X/\sim$ and
$\mathscr{A}^{\prime}:=\mathscr{A}_{X/\sim}$.
Then, by Lemma \ref{M285},
$(Y,\mathscr{A}^{\prime})$
is a  $T$--measurable space.
Define $\theta:X\rightarrow Y$ by
$\theta(x)=[x]_{\sim}$.
Hence $\theta$ is a onto function and
$h_f\circ\theta=f$ for any $f\in M(X)$, where $h_f$ is the same function
Lemma \ref{M285}.
 Then $\eta$  is onto function and we have
 $$\eta(g_1+g_2)(x)
 =((g_1+g_2)\circ\theta)(x)
= (g_1\circ\theta)(x) +(g_2\circ\theta)(x)
= (\eta(g_1)+\eta(g_2))(x)$$
and
 $$\eta(g_1  g_2)(x)
 =((g_1  g_2)\circ\theta)(x)
= (g_1 \circ\theta)(x) (g_2\circ\theta)(x)
=(\eta(g_1) \eta(g_2))(x),$$
for every $g_1,  g_2\in M(Y)$
and every $x\in X$.
Since $\theta$ is onto, we infer that
 $$ker(\eta)
 =\{g\in M(Y): \forall x\in X, g(\theta(x))=\{0\}\}
=\{g\in M(X):g(Y)=\{0\}\}
=\{0\},$$
 which follows that $\eta$ is an isomorphism, i.e.,
 $M(X)\cong  M(Y)$ as rings.

 If $A\in \mathscr{A}$  then, by Proposition \ref{M15},
   there is $f\in M(X)$ such that $A=z(f)$,
   which follows that
$$z(h_f)=\{[x]:h_f([x])=0\}=\{[x]:f(x)=0\}=\{[x]:x\in z(f)\}
=\theta( z(f)).$$
Therefore, by Proposition \ref{M15}, $\theta (A)\in \mathscr{A}'$.
Thus, the statement (1) holds.

Since $\mathscr{A}^{\prime}$ is the
  weak measurable space induced by $\{h_f : f \in M(X)\} $ on $Y$ and for every $f \in M(X)$, $h_f\circ\theta=f$,
  we conclude from Lemma \ref{M280}  that
  for every $B\in\mathscr{A}^{\prime}$,
$\theta^{-1}(B)\in \mathscr{A}$.
Thus, the statement (2) holds.
\end{proof}

As a consequence of the foregoing theorem,
algebraic or lattice properties that hold for all $M(X)$,
 with $X$ \, $T$-measurable space,
 hold just as well for all $M(X)$, with $X$  arbitrary.

Now, we give an algebraic characterization
$T$-measurable spaces in terms of
 maximal ideals.
\begin{proposition}   \label{M105}
Let $(X, \mathscr{A})$ be a measurable space. Then the following statements are equivalent.
\begin{enumerate}
\item[{\rm (1)}]  The measurable space $(X, \mathscr{A})$ is a  $T$-measurable space.
\item[{\rm (2)}]  If $x$ and $y$ are distinct points in $X$, then $M_x\not=M_x$.
\item[{\rm (3)}]  For every maximal ideal $M$ in $ M(X)$, $|\bigcap_{f\in M}Z(f)|\leq 1$.
\end{enumerate}
\end{proposition}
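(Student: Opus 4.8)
The plan is to prove the three implications $(1)\Rightarrow(2)\Rightarrow(3)\Rightarrow(1)$, exploiting the fact that the fixed maximal ideals of $M(X)$ are precisely the ideals $M_p$ for $p\in X$ (Proposition~\ref{M45}) together with the characterization of zero-sets of measurable functions (Proposition~\ref{M15} and Remark~\ref{chi}).

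\textbf{$(1)\Rightarrow(2)$.} Suppose $(X,\mathscr A)$ is $T$-measurable and let $x\neq y$ in $X$. By Definition~\ref{M100} there is a measurable set $A\in\mathscr A$ with, say, $x\in A$ and $y\notin A$. Then the characteristic function $\chi_{A}$ lies in $M(X)$ by Remark~\ref{chi}, and $\chi_{A}(x)=1\neq 0$ while $\chi_{A}(y)=0$; hence $\chi_{A}\in M_y\setminus M_x$, so $M_x\neq M_y$. (The statement in item~(2) as printed reads ``$M_x\neq M_x$'', which is a typo for ``$M_x\neq M_y$''; I would read it that way.)

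\textbf{$(2)\Rightarrow(3)$.} Let $M$ be a maximal ideal of $M(X)$ and suppose $\bigcap_{f\in M}Z(f)$ contains two distinct points $x,y$. Then every $f\in M$ vanishes at $x$ and at $y$, i.e. $M\subseteq M_x$ and $M\subseteq M_y$. Since $M_x$ and $M_y$ are proper ideals (Proposition~\ref{M45}) and $M$ is maximal, we get $M=M_x=M_y$, contradicting~(2). Hence $|\bigcap_{f\in M}Z(f)|\leq 1$.

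\textbf{$(3)\Rightarrow(1)$.} This is the step I expect to need the most care. Suppose $(X,\mathscr A)$ is not $T$-measurable, so there are distinct points $x\neq y$ such that every measurable set contains both or neither; equivalently $f(x)=f(y)$ for every $f\in M(X)$, since by Proposition~\ref{M15} each $f\in M(X)$ is built from its level sets $f^{-1}(O)\in\mathscr A$. Then $x\in Z(f)\iff y\in Z(f)$ for all $f\in M(X)$, so in particular $M_x=M_y$. Now $\{x,y\}\subseteq Z(f)$ for every $f\in M_x$, so $\{x,y\}\subseteq\bigcap_{f\in M_x}Z(f)$, and since $M_x$ is a maximal ideal (Proposition~\ref{M45}) this violates~(3). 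Therefore $(3)$ forces $(X,\mathscr A)$ to be $T$-measurable.

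The only genuine subtlety is in $(3)\Rightarrow(1)$: one must argue that failure of the $T$-separation property for the points $x,y$ is the same as $f(x)=f(y)$ for all $f\in M(X)$. One direction is immediate from Remark~\ref{chi} (a separating measurable set yields a separating characteristic function); the other uses Proposition~\ref{M15}, namely that every $f\in M(X)$ has all its preimages of open sets measurable, so if no measurable set separates $x$ from $y$ then no such preimage does, forcing $f(x)=f(y)$. With that equivalence in hand the rest is routine maximality bookkeeping.
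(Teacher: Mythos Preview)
Your proof is correct and matches the paper almost verbatim for $(1)\Rightarrow(2)$ and $(2)\Rightarrow(3)$. For $(3)\Rightarrow(1)$ you argue by contrapositive (no separating measurable set $\Rightarrow f(x)=f(y)$ for all $f$ $\Rightarrow \{x,y\}\subseteq\bigcap_{f\in M_x}Z(f)$), whereas the paper argues directly: since $x\in\bigcap_{f\in M_x}Z(f)$ always, condition~(3) forces $\bigcap_{f\in M_x}Z(f)=\{x\}$ and $\bigcap_{f\in M_y}Z(f)=\{y\}$, so some $f\in M_x\setminus M_y$ exists and $Z(f)$ is the separating measurable set. The paper's route is marginally cleaner because it bypasses the auxiliary equivalence you flagged as ``the only genuine subtlety''; your route is equally valid but does require that extra (easy) step.
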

\begin{proof}
(1)$\Rightarrow$(2).
Assume that  $x$ and $y$ are distinct points in $X$. By  hypothesis, there exists a  measurable
set $A$ in $X$ such that $x\in A$ and $y\not\in A$. By Remark \ref{chi}, $\chi_{_A}\in M_y\setminus M_x$.

(2)$\Rightarrow$(3).
Let $M$ be a maximal ideal  in $ M(X)$ with $|\bigcap_{f\in M}Z(f)|\geq 2$.
If  $x$ and $y$ are distinct points in $\bigcap_{f\in M}Z(f)$,
then $M\subseteq M_x$ and $M\subseteq M_y$.
Since $M$ is maximal, we conclude from
Proposition \ref{M45} that $M_y=M=M_x$,
and this is a contradiction.

(3)$\Rightarrow$(1).
Assume that  $x$ and $y$ are distinct points in $X$. Then, Proposition \ref{M45},
 $\bigcap_{f\in M_x}Z(f)= \{x\}$ and $\bigcap_{f\in M_y}Z(f)= \{y\}$, which follows that there exists  an $f$ in $M_x\setminus M_y$.
Since $Z(f)\in   \mathscr{A}$, $x\in Z(f)$ and
$y\not\in Z(f)$, we infer that  $(X, \mathscr{A})$ is a  $T$-measurable space.
\end{proof}
\begin{corollary}  \label{X-P=1}
Let $(X, \mathscr{A})$ be a $T$-measurable space.
 For every element $P$ of   $\mathscr{A}$,
  $P$ is a prime element of
  $\mathscr{A}$   if and only if \, $|X\setminus P|=1$.
\end{corollary}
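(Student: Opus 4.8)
The plan is to derive this directly from the characterization of $T$-measurable spaces via maximal ideals (Proposition \ref{M105}) together with the identification of maximal ideals of $M(X)$ with prime elements of $Id(\mathscr{A})$ (Proposition \ref{max}) and the bijection between maximal elements and prime elements of $\mathscr{A}$ recorded in Remark \ref{prime=max}. First I would treat the easy direction: if $|X\setminus P|=1$, say $X\setminus P=\{x\}$, then $P=X\setminus\{x\}$ and for any $V,W\in\mathscr{A}$ with $V\cap W\subseteq P$ we have $x\notin V\cap W$, so $x\notin V$ or $x\notin W$, i.e. $V\subseteq P$ or $W\subseteq P$; since also $P\subsetneq X$, this shows $P$ is prime in $\mathscr{A}$. (One should note $X\setminus\{x\}\in\mathscr{A}$ here, which holds because $\{x\}=\bigcap_{f\in M_x}Z(f)$ is measurable in a $T$-measurable space, or more simply because in a $T$-measurable space singletons are measurable — this uses hypothesis.)

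For the forward direction, suppose $P\in\mathscr{A}$ is a prime element of $\mathscr{A}$. Then $\downarrow P$ is a prime element of $Id(\mathscr{A})$, hence a maximal element of $Id(\mathscr{A})$ by Remark \ref{prime=max}, so by Proposition \ref{max} the set $M_P=M^{\downarrow P}=\{f\in M(X):\coz(f)\subseteq P\}$ is a maximal ideal of $M(X)$. The key computation is that $\bigcap_{f\in M_P}Z(f)=X\setminus P$: indeed $f\in M_P$ means $\coz(f)\subseteq P$, equivalently $X\setminus P\subseteq Z(f)$, so $X\setminus P\subseteq\bigcap_{f\in M_P}Z(f)$; conversely if $y\in P$ then $\chi_{X\setminus(X\setminus\{y\})}$—more cleanly, pick $A\in\mathscr{A}$ with $A\subseteq P$ and $y\in A$ (e.g.\ $A=\{y\}$ is available since the space is $T$-measurable and $\{y\}=\bigcap_{f\in M_y}Z(f)\in\mathscr{A}$; and $\{y\}\subseteq P$ because $y\in P$), then $\chi_{A^c}\in M_P$ with $y\notin Z(\chi_{A^c})=A^c$, so $y\notin\bigcap_{f\in M_P}Z(f)$. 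Hence $\bigcap_{f\in M_P}Z(f)=X\setminus P$, and since $P\neq\top$ this intersection is nonempty, so $|X\setminus P|\geq 1$. By Proposition \ref{M105}, $(X,\mathscr{A})$ being $T$-measurable forces $|\bigcap_{f\in M_P}Z(f)|\leq 1$, whence $|X\setminus P|=1$.

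The main obstacle is the bookkeeping around measurability of singletons and the precise translation between "$P$ prime in $\mathscr{A}$", "$\downarrow P$ prime (= maximal) in $Id(\mathscr{A})$", and "$M_P$ maximal in $M(X)$"; once those are lined up (all supplied by Remark \ref{prime=max} and Propositions \ref{max}, \ref{fixmax}), the corollary is just the special case of Proposition \ref{M105} applied to the maximal ideal $M_P$, using the identity $\bigcap_{f\in M_P}Z(f)=X\setminus P$. No delicate estimates are needed; the content is entirely in assembling these earlier results.
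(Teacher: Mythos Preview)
Your approach is essentially the paper's: use Proposition~\ref{fixmax} (or equivalently Remark~\ref{prime=max} and Proposition~\ref{max}) to see that $M_P$ is a fixed maximal ideal with $\bigcap_{f\in M_P}Z(f)=X\setminus P$, then apply Proposition~\ref{M105}. The sufficiency direction is trivial in both.

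However, there are two execution slips you should fix. First, in the computation of $\bigcap_{f\in M_P}Z(f)$ you wrote $\chi_{A^c}\in M_P$ with $Z(\chi_{A^c})=A^c$; both are wrong, since $\coz(\chi_{A^c})=A^c$ (so membership in $M_P$ would require $A^c\subseteq P$, not $A\subseteq P$) and $Z(\chi_{A^c})=A$. You want $\chi_A$ instead: then $\coz(\chi_A)=A\subseteq P$ gives $\chi_A\in M_P$, and $y\in A$ gives $y\notin Z(\chi_A)=A^c$. Second, and more importantly, you do not need measurability of singletons at all, and your justification for it (``$\{y\}=\bigcap_{f\in M_y}Z(f)\in\mathscr{A}$'') is unjustified, since that intersection is in general uncountable and $\mathscr{A}$ is only closed under countable intersections. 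Simply take $A=P$: then $y\in P=A\subseteq P$ and $\chi_P\in M_P$ with $y\notin Z(\chi_P)=X\setminus P$, which is exactly what the paper does (implicitly, via the proof of Proposition~\ref{fixmax}). Similarly, your parenthetical worry in the easy direction that $X\setminus\{x\}\in\mathscr{A}$ is moot: the hypothesis already gives $P\in\mathscr{A}$.
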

\begin{proof}
\textit{Necessity.}
By Propositions  \ref{fixmax} and \ref{M105},
$|X\setminus P|=|\bigcap_{f\in M_P}  z(f)|=1$.

\textit{Sufficiency.}  It is clear.
\end{proof}
We recall that a ring
$R$ is called a Gelfand ring or a $PM$-ring if each of its proper prime ideals is contained in a unique maximal ideal.
 The following  proposition shows that
 $M(X)$ is  a Gelfand ring.
\begin{proposition}   \label{M115}
Let $(X, \mathscr{A})$ be a measurable space.
Every prime ideal in $ M(X)$ is contained in a unique
maximal ideal.
\end{proposition}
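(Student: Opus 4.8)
The plan is to show that every prime ideal $P$ of $M(X)$ is contained in exactly one maximal ideal, exploiting the structure theory already developed in the excerpt. First I would recall from Proposition~\ref{M110} that in $M(X)$ every prime ideal is semiprime, and more importantly that a proper ideal containing a prime ideal is itself prime; combined with Propositions~\ref{M65} and~\ref{M66}, every ideal of $M(X)$ is a $Z_{\mathscr{A}}$-ideal, so prime ideals are determined by their zero-set filters. Thus if $P$ is prime, $Z[P]$ is a $Z_{\mathscr{A}}$-filter, and any maximal ideal $M \supseteq P$ corresponds to a $Z_{\mathscr{A}}$-ultrafilter $Z[M] \supseteq Z[P]$.

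The key step is to prove that $Z[P]$ is a \emph{prime} $Z_{\mathscr{A}}$-filter, meaning that whenever $A \cup B \in Z[P]$ with $A, B \in \mathscr{A}$, then $A \in Z[P]$ or $B \in Z[P]$. This follows from characterization (4) of Proposition~\ref{M110}: given $A = Z(g)$ and $B = Z(h)$, the function $|g| - |h|$ does not change sign on some zero-set $Z(f) \in Z[P]$; on the part where $|g| \geq |h|$ we get $Z(f) \cap Z(g) \subseteq Z(h)$, and since $Z(f) \cup (A \cup B) \supseteq Z(f)$ we can manipulate to conclude $gh \in P$ forces (via $Z((gh)^2 + f^2) = Z(gh) \cap Z(f) \subseteq Z(h)$ and the $Z_{\mathscr{A}}$-ideal property) that $h \in P$, hence $B \in Z[P]$; symmetrically for the other sign. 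Then I would invoke the standard fact, analogous to the $C(X)$ theory, that a prime $Z_{\mathscr{A}}$-filter is contained in a unique $Z_{\mathscr{A}}$-ultrafilter: if $\mathcal{F}$ and $\mathcal{G}$ were two distinct $Z_{\mathscr{A}}$-ultrafilters both containing the prime filter $Z[P]$, then by the remarks after Proposition~\ref{M35} there are $A \in \mathcal{F}$, $B \in \mathcal{G}$ with $A \cap B = \emptyset$; passing to complements, $A^c \cup B^c = X \in Z[P]$, so primeness forces $A^c \in Z[P] \subseteq \mathcal{G}$ or $B^c \in Z[P] \subseteq \mathcal{F}$, contradicting $A \in \mathcal{F}$ (resp. $B \in \mathcal{G}$) since a filter cannot contain a set and its complement.

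Finally, translating back through the bijection of Proposition~\ref{M35} between maximal ideals and $Z_{\mathscr{A}}$-ultrafilters, and using that maximal ideals $M \supseteq P$ correspond exactly to $Z_{\mathscr{A}}$-ultrafilters refining $Z[P]$ (via Propositions~\ref{M30} and~\ref{M40}, since $Z^{-1}[Z[M]] = M$ for maximal $M$ and $Z^{-1}[Z[P]] \supseteq P$ is prime hence any maximal ideal over $P$ is recovered from its zero-filter), uniqueness of the ultrafilter gives uniqueness of the maximal ideal. I expect the main obstacle to be cleanly establishing that $Z[P]$ is a prime filter in the lattice-theoretic sense and, dually, that refining ultrafilters of a prime filter are unique — essentially a careful bookkeeping of complements inside $\mathscr{A}$ and correct use of the $Z_{\mathscr{A}}$-ideal property of $P$; once that is in place, the rest is formal transport along the established correspondences.
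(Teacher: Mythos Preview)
Your approach is correct but takes a genuinely different route from the paper. The paper's proof is a two-line contradiction: if $P$ were contained in two distinct maximal ideals $M$ and $M'$, then $M\cap M'$ would contain the prime ideal $P$, hence by Proposition~\ref{M110} (the implication (2)$\Rightarrow$(1)) the intersection $M\cap M'$ would itself be prime; but the intersection of two distinct maximal ideals is never prime (pick $a\in M\setminus M'$ and $b\in M'\setminus M$; then $ab\in M\cap M'$ while neither factor is), a contradiction. No filters are needed.

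Your argument instead passes through the filter side: you show $Z[P]$ is a prime $Z_{\mathscr{A}}$-filter, then argue that a prime filter lies in a unique ultrafilter, and finally transport uniqueness back through the bijection of Proposition~\ref{M35}. This is the classical Gillman--Jerison strategy for $C(X)$ and it works here, but it is considerably longer, and your ``key step'' paragraph is more roundabout than necessary: once you know $A\cup B=Z(gh)\in Z[P]$ and that $P$ is a $Z_{\mathscr{A}}$-ideal (Proposition~\ref{M66}), you get $gh\in P$ directly, and primeness of $P$ gives $g\in P$ or $h\in P$ without re-running the sign argument from Proposition~\ref{M110}. (Indeed, you are essentially reproving Proposition~\ref{M130}(1), which appears later in the paper.) Also, in your ultrafilter-uniqueness step the contradiction should read $A^c\in Z[P]\subseteq\mathcal{F}$ (not $\subseteq\mathcal{G}$), so that $A,A^c\in\mathcal{F}$ forces $\emptyset\in\mathcal{F}$; this is a minor slip. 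What your route buys is an explicit filter-theoretic picture; what the paper's route buys is brevity, since Proposition~\ref{M110} has already done all the work.
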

\begin{proof}
Let $P$ be a prime ideal.
 We know that every ideal is contained in at least one maximal
ideal.
If $M$ and $M'$ are distinct maximal ideals such that $P\subseteq M\cap M'$,
then, by Proposition \ref{M110}, $M\cap M'$ is a prime ideal, since $M$ and $M'$ are $Z_{\mathscr{A}}$-ideals.
This is a contradiction to the fact that
 $M\cap M'$ is not prime.
\end{proof}
The following proposition shows that
$T$-measurable spaces have a nice  characterization in terms of prime ideals.
\begin{proposition}   \label{M120} M120
Let $(X, \mathscr{A})$ be a measurable space.
Then
the measurable space $(X, \mathscr{A})$ is a  $T$-measurable space
if and only if
for every prime ideal $P$ in $ M(X)$, $|\bigcap_{f\in P}Z(f)|\leq 1$.
%
\end{proposition}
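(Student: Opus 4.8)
The plan is to prove the equivalence by using the already-established machinery connecting prime ideals of $M(X)$ with maximal ideals, together with the $T$-measurable characterization in Proposition \ref{M105}. The forward direction is immediate: if $(X,\mathscr{A})$ is $T$-measurable and $P$ is a prime ideal in $M(X)$, then by Proposition \ref{M115} $P$ is contained in a unique maximal ideal $M$. Since $Z$ reverses inclusion, $\bigcap_{f\in P}Z(f)\supseteq\bigcap_{f\in M}Z(f)$; but I want the reverse containment, so instead I would argue directly that $\bigcap_{f\in P}Z(f)\subseteq\bigcap_{f\in M}Z(f)$ fails to be the right move and instead observe that any $p\in\bigcap_{f\in P}Z(f)$ forces $P\subseteq M_p$, so $\bigcap_{f\in P}Z(f)$ is contained in the intersection taken over whichever maximal ideals contain $P$. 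More cleanly: if $x,y\in\bigcap_{f\in P}Z(f)$ with $x\neq y$, then $P\subseteq M_x$ and $P\subseteq M_y$, and since $M_x,M_y$ are maximal (Proposition \ref{M45}) and $P$ is contained in a unique maximal ideal (Proposition \ref{M115}), we get $M_x=M_y$, contradicting $T$-measurability via Proposition \ref{M105}(2). Hence $|\bigcap_{f\in P}Z(f)|\leq 1$.

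For the converse, suppose every prime ideal $P$ of $M(X)$ satisfies $|\bigcap_{f\in P}Z(f)|\leq 1$. Since every maximal ideal is prime, in particular every maximal ideal $M$ satisfies $|\bigcap_{f\in M}Z(f)|\leq 1$, and then Proposition \ref{M105} (the implication (3)$\Rightarrow$(1)) immediately gives that $(X,\mathscr{A})$ is $T$-measurable. So the converse is essentially a triviality once one notes maximal ideals are prime.

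The only genuine content, then, is the forward direction, and the key observation there is that $p\in\bigcap_{f\in P}Z(f)$ is equivalent to $P\subseteq M_p$: indeed $p\in Z(f)$ for all $f\in P$ means $f(p)=0$ for all $f\in P$, i.e. $P\subseteq M_p$. Using this together with the Gelfand property of $M(X)$ (Proposition \ref{M115}) and the fact that the $M_p$ are maximal (Proposition \ref{M45}) closes the argument. I would write the proof in two short paragraphs (\emph{Necessity} and \emph{Sufficiency}) mirroring the style of the surrounding propositions.

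I expect no serious obstacle: the statement is a mild strengthening of Proposition \ref{M105}, upgrading "maximal ideal" to "prime ideal", and the upgrade is powered entirely by the Gelfand ring property proved in Proposition \ref{M115}. The one point to be careful about is the direction of inclusions when passing through $Z[\,\cdot\,]$ — making sure that "$P\subseteq M_x$ and $P\subseteq M_y$" genuinely follows from having two distinct common zeros, and that one then invokes uniqueness of the containing maximal ideal rather than maximality of $P$ itself (which need not hold).

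\begin{proof}
\textit{Necessity.}
Suppose $(X, \mathscr{A})$ is a $T$-measurable space and let $P$ be a prime ideal in $ M(X)$. Assume, for contradiction, that there exist distinct points $x, y\in \bigcap_{f\in P}Z(f)$. Then $f(x)=f(y)=0$ for every $f\in P$, so $P\subseteq M_x$ and $P\subseteq M_y$. By Proposition \ref{M45}, $M_x$ and $M_y$ are maximal ideals in $ M(X)$, and by Proposition \ref{M115} the prime ideal $P$ is contained in a unique maximal ideal; hence $M_x=M_y$. But $x\not=y$, so this contradicts Proposition \ref{M105}. Therefore $|\bigcap_{f\in P}Z(f)|\leq 1$.

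\textit{Sufficiency.}
Assume that $|\bigcap_{f\in P}Z(f)|\leq 1$ for every prime ideal $P$ in $ M(X)$. In particular, since every maximal ideal is a prime ideal, $|\bigcap_{f\in M}Z(f)|\leq 1$ for every maximal ideal $M$ in $ M(X)$. Hence, by Proposition \ref{M105}, the measurable space $(X, \mathscr{A})$ is a $T$-measurable space.
\end{proof}
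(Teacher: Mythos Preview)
Your proof is correct and follows essentially the same approach as the paper's: both directions invoke Proposition~\ref{M105}, and the forward direction relies on the Gelfand property (Proposition~\ref{M115}) together with the observation that $x\in\bigcap_{f\in P}Z(f)$ iff $P\subseteq M_x$. The only cosmetic difference is the order in which the contradiction is drawn in the necessity part: the paper first uses $T$-measurability to get $M_x\neq M_y$ and then contradicts uniqueness, whereas you first use uniqueness to get $M_x=M_y$ and then contradict $T$-measurability.
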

\begin{proof}
\textit{Necessity.}
Let $P$ is a prime ideal  in $ M(X)$ such that $x,y\in \bigcap_{f\in P}Z(f)$ with $x\not=y$.
Then, by hypothesis and
Proposition \ref{M105},  $M_x$ and $M_y$ are distinct maximal ideals such that $P\subseteq M_x\cap M_y$.
This is a contradiction to the fact that
every prime ideal in $ M(X)$ is contained in a unique
maximal ideal.

\textit{Sufficiency.}
Let $M$ be a maximal ideal  in $ M(X)$, then  $M$ be a prime ideal  in $ M(X)$, which follows that
$|\bigcap_{f\in M}Z(f)|\leq 1$, by hypothesis.
Therefore,  by Proposition \ref{M105},
  $(X, \mathscr{A})$ is a  $T$-measurable space.
\end{proof}
\begin{definition} \label{M125}
{\rm \cite{EbrahimiMahmoudi1996}}
  Let $L$ be a distributive lattice and $F$ be a filter  of $L$. The filter $F$ is called   {\it prime filter}
 if  $I  \not= L$ and  $x \vee y  \in  F$
implies $x  \in  F$ or $y  \in  F$.
\end{definition}
In the following proposition, we study
relations between prime ideals and prime  $Z_{\mathscr{A}}$-filters.
\begin{proposition} \label{M130}  M130
Let $(X, \mathscr{A})$ be a measurable space.
In $ M(X)$,  the following statements hold.
\begin{enumerate}
\item[{\rm (1)}]  If $P$ is a prime ideal in $ M(X)$, then $Z[P]$ is a prime  $Z_{\mathscr{A}}$-filter on $X$.
\item[{\rm (2)}]  If $ \mathcal F$ is a prime  $Z_{\mathscr{A}}$-filter on $X$, then $Z^{-1}[\mathcal F]$ is a prime ideal in $ M(X)$.

The mapping $Z$ is one-one from the set of all prime ideals in $ M(X)$ onto
the set of all  prime $Z_{\mathscr{A}}$-filters on $X$.
\end{enumerate}
\end{proposition}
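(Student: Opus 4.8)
The plan is to mimic the proof of Proposition \ref{M35}, replacing ``maximal/ultrafilter'' by ``prime/prime filter'' throughout. The key observation is that both $Z_{\mathscr{A}}$ and its inverse $Z^{-1}$ preserve inclusion, so the one-one and onto claims in the last sentence will follow immediately once parts (1) and (2) are established, together with the already-known fact (Proposition \ref{M15}) that $Z_{\mathscr{A}}[X]=\mathscr{A}$ and the identities $Z(f-g)\supseteq Z(f)\cap Z(g)$, $Z(fg)=Z(f)\cup Z(g)$, $Z(f^2+g^2)=Z(f)\cap Z(g)$ used in Proposition \ref{M30}. So the real content is (1) and (2).

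For (1), suppose $P$ is a prime ideal in $M(X)$. By Proposition \ref{M30}(1), $Z[P]$ is already a $Z_{\mathscr{A}}$-filter, so it remains only to check primeness: if $A, B\in\mathscr{A}$ with $A\cup B\in Z[P]$, I must show $A\in Z[P]$ or $B\in Z[P]$. Write $A\cup B=Z(h)$ for some $h\in P$. By Proposition \ref{M15} choose $f,g\in M(X)$ with $Z(f)=A$ and $Z(g)=B$; then $Z(fg)=Z(f)\cup Z(g)=A\cup B=Z(h)$. Since $M(X)$ is a reduced ring and (by Propositions \ref{M65}, \ref{M66}) every ideal is a $Z_{\mathscr{A}}$-ideal, from $h\in P$ and $Z(h)=Z(fg)$ we get $fg\in P$; as $P$ is prime, $f\in P$ or $g\in P$, whence $A=Z(f)\in Z[P]$ or $B=Z(g)\in Z[P]$. (One should also note $Z[P]\neq\mathscr{A}$, i.e. $\emptyset\notin Z[P]$, which is part of Proposition \ref{M30}(1).)

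For (2), let $\mathcal F$ be a prime $Z_{\mathscr{A}}$-filter and put $P=Z^{-1}[\mathcal F]$. By Proposition \ref{M30}(2), $P$ is a proper ideal, so by Proposition \ref{M110} it suffices to verify condition (3) there, or more directly: if $gh\in P$ then $Z(gh)=Z(g)\cup Z(h)\in\mathcal F$, and primeness of $\mathcal F$ gives $Z(g)\in\mathcal F$ or $Z(h)\in\mathcal F$, i.e. $g\in P$ or $h\in P$; combined with Proposition \ref{M110}(3)$\Rightarrow$(1) this shows $P$ is prime. Finally, the mapping: by Proposition \ref{M30} the assignments $P\mapsto Z[P]$ and $\mathcal F\mapsto Z^{-1}[\mathcal F]$ are mutually inverse on ideals that are $Z_{\mathscr{A}}$-ideals and on $Z_{\mathscr{A}}$-filters respectively, and since every ideal of $M(X)$ is a $z$-ideal, $Z^{-1}[Z[P]]=P$ for every prime ideal $P$ and $Z[Z^{-1}[\mathcal F]]=\mathcal F$ for every $Z_{\mathscr{A}}$-filter $\mathcal F$; both $Z$ and $Z^{-1}$ preserve inclusion, so $Z$ restricts to an order-isomorphism, in particular a bijection, between prime ideals and prime $Z_{\mathscr{A}}$-filters.

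I do not anticipate a serious obstacle; the one point requiring slight care is the surjectivity/realization step in (1)---producing $f,g$ with prescribed zero-sets $A,B$---which is exactly where Proposition \ref{M15} is invoked, and the fact that $fg\in P$ follows from $h\in P$ only because $I$ being a $z$-ideal lets us pass from $Z(h)=Z(fg)$ to $fg\in P$. Everything else is a routine transcription of the ultrafilter argument.
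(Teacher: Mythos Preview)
Your proof is correct and follows essentially the same approach as the paper. The only minor variation is in part~(1): the paper passes to $P_z:=Z^{-1}[Z[P]]$ and uses Proposition~\ref{M110} (an ideal containing a prime is prime) to deduce $f\in P_z$ or $g\in P_z$, whereas you invoke Proposition~\ref{M66} directly to get $fg\in P$ from $Z(fg)=Z(h)$ with $h\in P$---but since $P_z=P$ by that same proposition, the two routes coincide.
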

\begin{proof}
 (1). Consider $f,g\in  M(X)$ with $Z(fg)=Z(f)\cup Z(g)\in Z[P]$. Since $P_z:=Z^{-1}[Z[P]]$ is a $Z_{\mathscr{A}}$-ideal in  $ M(X)$ and $Z(fg)\in Z[P]= Z[P_z]$, we conclude that $fg\in P_z$.
 From $P\subseteq P_z$ and
 Proposition \ref{M110}, we infer that $P_z$ is a prime ideal in $ M(X)$, which follows that $f\in P_z$ or  $g\in P_z$.
 Hence $Z(f)\in Z[P_z]= Z[P]$ or $Z(g)\in Z[P_z]= Z[P]$ and, by Proposition \ref{M30},  the proof is now complete.

 (2). Consider  $f,g\in  M(X)$ with $fg\in Z^{-1}[\mathcal F]$,
 then $Z(f)\cup Z(g)=Z(fg)\in F$, which follows that $Z(f) \in F$ or $ Z(g) \in F$, by hypothesis.
Hence  $f \in  Z^{-1}[\mathcal F]$ or $ g \in  Z^{-1}[\mathcal F]$
 and, by Proposition \ref{M30},  the proof is now complete.
\end{proof}

\section{On compact measurable spaces}  \label{6}
In this section,   we study the third subject.
We begin with the following lemma.
\begin{lemma}   \label{M200-1}
Let $(X, \mathscr{A})$ be a compact measurable space.
Suppose $\theta$ and $(Y,\mathscr{A}^{\prime})$
 are the same symptoms Proposition \ref{M290}.
  Then the following statements  hold.
\begin{enumerate}
\item[{\rm (1)}] If $A$  is a compact element  of $ \mathscr{A}$, then
$\theta(A)$ is a compact element  of $\mathscr{A}^{\prime}$.
\item[{\rm (2)}] If $B\in \mathscr{A}'$   is a compact element  of $ \mathscr{A}^{\prime}$, then
$\theta^{-1}(B)$  is a compact element  of $\mathscr{A} $.
\end{enumerate}
\end{lemma}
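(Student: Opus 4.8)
The plan is to use the transfer properties of $\theta$ established in Proposition~\ref{M290}, namely that $\theta$ is onto with $\theta(A)\in\mathscr{A}^{\prime}$ for $A\in\mathscr{A}$ and $\theta^{-1}(B)\in\mathscr{A}$ for $B\in\mathscr{A}^{\prime}$, together with the set-theoretic identities $\theta(\theta^{-1}(B))=B$ (valid since $\theta$ is surjective) and $\theta^{-1}(\theta(A))=A$ (valid since, by Lemma~\ref{M285}(2), every $A\in\mathscr{A}$ is a union of $\sim$-classes, so it is saturated). I would record these two identities first, as a short preliminary observation, since both parts of the lemma rest on them.

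For part (1), let $A\in\mathscr{A}$ be compact and suppose $\theta(A)=\bigvee_{i\in I}B_i$ with $B_i\in\mathscr{A}^{\prime}$, i.e. $\theta(A)=\bigcup_{i\in I}B_i$. Applying $\theta^{-1}$ and using that $\theta^{-1}$ commutes with arbitrary unions gives $A=\theta^{-1}(\theta(A))=\bigcup_{i\in I}\theta^{-1}(B_i)$, a join in $\mathscr{A}$ of measurable sets. Compactness of $A$ yields a finite $J\subseteq I$ with $A=\bigcup_{i\in J}\theta^{-1}(B_i)$; now apply $\theta$ and use that $\theta$ commutes with unions to get $\theta(A)=\bigcup_{i\in J}\theta(\theta^{-1}(B_i))=\bigcup_{i\in J}B_i$. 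Hence $\theta(A)$ is compact in $\mathscr{A}^{\prime}$. For part (2), let $B\in\mathscr{A}^{\prime}$ be compact and write $\theta^{-1}(B)=\bigcup_{i\in I}A_i$ with $A_i\in\mathscr{A}$; apply $\theta$ to obtain $B=\theta(\theta^{-1}(B))=\bigcup_{i\in I}\theta(A_i)$, extract a finite subcover by compactness of $B$, then apply $\theta^{-1}$ and use saturation of $\theta^{-1}(B)$ to pull the finite subcover back to $\theta^{-1}(B)$.

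The only genuine subtlety is making sure the set-theoretic identities are applied in the correct direction: $\theta(\theta^{-1}(B))=B$ needs surjectivity of $\theta$, and $\theta^{-1}(\theta(A))=A$ needs that $A$ is $\sim$-saturated, which is exactly Lemma~\ref{M285}(2). Neither $\theta$ injectivity nor any compactness of the ambient space $X$ is actually needed in the argument; the hypothesis that $(X,\mathscr{A})$ is compact is inherited only through the statement's phrasing that $\theta$ and $(Y,\mathscr{A}^{\prime})$ come from Proposition~\ref{M290}. So I expect no real obstacle here — the proof is a clean two-line bookkeeping argument in each direction, and the main thing to be careful about is citing Lemma~\ref{M285}(2) for saturation rather than silently assuming it.
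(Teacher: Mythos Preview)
Your proposal is correct and follows essentially the same transfer argument as the paper: pull covers back along $\theta^{-1}$, extract a finite subcover by compactness, then push forward along $\theta$ (and vice versa). The only cosmetic difference is that the paper works with the cover formulation ($\theta(A)\subseteq\bigcup_\lambda B_\lambda$) rather than equalities, which lets part~(1) get by with the trivial inclusion $A\subseteq\theta^{-1}(\theta(A))$ instead of saturation; in part~(2) the paper, like you, implicitly uses $\theta^{-1}(\theta(A_\lambda))=A_\lambda$, and your explicit citation of Lemma~\ref{M285}(2) for this is a welcome clarification.
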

\begin{proof}
Let $A$   be  a compact  element of $ \mathscr{A}$
 and
$\{B_{\lambda}: \lambda\in\Lambda\} \subseteq\mathscr{A}'$  such that $\theta(A)\subseteq\bigcup_{\lambda\in\Lambda}
B_{\lambda}$.
From
 $A\subseteq\theta^{-1}(\bigcup_{\lambda\in\Lambda}
B_{\lambda})=\bigcup_{\lambda\in\Lambda} \theta^{-1}(B_{\lambda})$,
we conclude that there exists a finite subset $\Lambda^{\prime}$ of  $\Lambda$ such that
 $A\subseteq\bigcup_{\lambda\in\Lambda^{\prime}} \theta^{-1}(B_{\lambda})$, and hence
$\theta(A)\subseteq\theta(\bigcup_{\lambda\in\Lambda'}
\theta^{-1}(B_{\lambda}))=
\bigcup_{\lambda\in\Lambda'} B_{\lambda}$.
Therefore,
 $\theta(A)$ is a  compact element of
 $ \mathscr{A}^{\prime}$.
 Thus, the statement (1) holds.

Let $B\in \mathscr{A}'$  be a compact element  of $ \mathscr{A}^{\prime}$ and
 $\{A_{\lambda}:\lambda\in\Lambda\}\subseteq\mathscr{A}$
such that  $\theta^{-1}(B)\subseteq \bigcup_{\lambda\in\Lambda}A_{\lambda}$.
Since  $\theta$ is the onto function,
we infer that
$$B=\theta(\theta^{-1}(B))\subseteq \theta(\bigcup_{\lambda\in\Lambda}A_{\lambda})=\bigcup_{\lambda\in\Lambda}\theta (A_{\lambda}),$$
which follows that
 there is a finite subset $\Lambda^{\prime}$
 of $\Lambda$ such that $B\subseteq \bigcup_{\lambda\in\Lambda'}\theta (A_{\lambda})$,
 in other words
 $\theta^{-1}(B)\subseteq \bigcup_{\lambda\in\Lambda^{\prime}} A_{\lambda} $. Therefore,
 $\theta^{-1}(B)$ is  a compact element of $ \mathscr{A}$.
   Thus, the statement (2) holds.
\end{proof}
\begin{corollary}  \label{M295} M295
For every compact measurable space $(X,\mathscr{A})$ there
is a   compact $T$-measurable  space
 $(Y,\mathscr{A}^{\prime})$
  such that $$M(X)\cong  M(Y), $$
  as rings.
\end{corollary}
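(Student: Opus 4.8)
The plan is to assemble the desired isomorphism by combining Proposition \ref{M290} with Lemma \ref{M200-1}. Starting from an arbitrary compact measurable space $(X,\mathscr{A})$, Proposition \ref{M290} already produces a $T$-measurable space $(Y,\mathscr{A}^{\prime})$ together with an onto map $\theta\colon X\to Y$ such that $\eta\colon M(Y)\to M(X)$, $g\mapsto g\circ\theta$, is a ring isomorphism. So the only thing left to verify is that this particular $(Y,\mathscr{A}^{\prime})$ is again \emph{compact}, i.e.\ that $\mathscr{A}^{\prime}$ is a compact lattice.

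First I would recall that, by Proposition \ref{M85}, compactness of $(X,\mathscr{A})$ is equivalent to the statement that every cover of $X$ by members of $\mathscr{A}$ admits a finite subcover; equivalently, by the definition of a compact lattice, the top element $X\in\mathscr{A}$ is a compact element. Since $(X,\mathscr{A})$ is compact, $X$ is a compact element of $\mathscr{A}$. Now apply part (1) of Lemma \ref{M200-1} to $A=X$: it gives that $\theta(X)$ is a compact element of $\mathscr{A}^{\prime}$. But $\theta$ is onto, so $\theta(X)=Y$, the top element of $\mathscr{A}^{\prime}$. Hence the top element of $\mathscr{A}^{\prime}$ is compact, which is precisely the assertion that $\mathscr{A}^{\prime}$ is a compact lattice, i.e.\ that $(Y,\mathscr{A}^{\prime})$ is a compact measurable space. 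Combined with the fact that $(Y,\mathscr{A}^{\prime})$ is $T$-measurable (Proposition \ref{M290}) and the isomorphism $\eta$, this yields the corollary.

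There is essentially no obstacle here: the corollary is a direct bookkeeping consequence of the two results it cites, and the ``hard part'' — transporting compactness through $\theta$ — has already been isolated in Lemma \ref{M200-1}(1). The one point to state carefully is the translation between the lattice-theoretic notion of ``compact element'' and the covering formulation: one should note explicitly that ``$(X,\mathscr{A})$ is a compact measurable space'' means ``$X$ is a compact element of the bounded lattice $\mathscr{A}$,'' so that Lemma \ref{M200-1}(1) applies verbatim with $A=X$, and that ``$\theta$ onto'' forces $\theta(X)=Y=\top_{\mathscr{A}^{\prime}}$. After that, the proof is a single line.

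\begin{proof}
Let $(X,\mathscr{A})$ be a compact measurable space, so that its top element $X$ is a compact element of the lattice $\mathscr{A}$. Let $\theta\colon X\to Y$ and $(Y,\mathscr{A}^{\prime})$ be as in Proposition \ref{M290}; thus $(Y,\mathscr{A}^{\prime})$ is a $T$-measurable space, $\theta$ is onto, and $\eta\colon M(Y)\to M(X)$ given by $g\mapsto g\circ\theta$ is a ring isomorphism, so that $M(X)\cong M(Y)$ as rings. It remains to show that $(Y,\mathscr{A}^{\prime})$ is compact. Applying Lemma \ref{M200-1}(1) to the compact element $A=X$ of $\mathscr{A}$, we obtain that $\theta(X)$ is a compact element of $\mathscr{A}^{\prime}$. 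Since $\theta$ is onto, $\theta(X)=Y$, the top element of $\mathscr{A}^{\prime}$, which is therefore compact. Hence $\mathscr{A}^{\prime}$ is a compact lattice, i.e.\ $(Y,\mathscr{A}^{\prime})$ is a compact $T$-measurable space with $M(X)\cong M(Y)$ as rings.
\end{proof}
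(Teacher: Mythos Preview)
Your proof is correct and follows exactly the approach the paper takes: the paper's proof simply reads ``It is evident, by Proposition~\ref{M290} and Lemma~\ref{M200-1},'' and you have merely spelled out this evident step by applying Lemma~\ref{M200-1}(1) to $A=X$ and using that $\theta$ is onto.
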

\begin{proof}
It is evident, by Proposition \ref{M290} and Lemma  \ref{M200-1}.
\end{proof}

\begin{remark}  \label{M200} M200
Let $(X, \mathscr{A})$ be a compact
measurable space.
Consider $A\in \mathscr{A}$  and
$\{ B_{\lambda} \}_{ \lambda \in \Lambda }\subseteq  \mathscr{A}$ with
$A\subseteq \bigcup_{ \lambda \in \Lambda }B_{\lambda}$.
From $A^c\in  \mathscr{A}$ and
$X=A^c\cup  \bigcup_{ \lambda \in \Lambda }B_{\lambda}$, we infer that there exists
 a finite subset $\Lambda_0$ of $\Lambda$ such that
$X=A^c\cup  \bigcup_{ \lambda \in \Lambda_0 }B_{\lambda}$,
which follows that $A\subseteq \bigcup_{ \lambda \in \Lambda_0 }B_{\lambda}$.
Therefore,  for every $A\in  \mathscr{A} $,  $A$ is  a compact element of $\mathscr{A}$.
\end{remark}

\begin{corollary} \label{M205} M205
 Let $(X, \mathscr{A})$ be a measurable space and $A,B \in  \mathscr{A}$.
 The following statements hold.
\begin{enumerate}
\item[{\rm (1)}]  If $ A$ or $B$ is  a compact element of $\mathscr{A}$, then  $A\cap B $ is a compact element of $\mathscr{A}$.

\item[{\rm (2)}]  If $ A$ and $B$ are    compact elements of $\mathscr{A}$, then
$A\cup B $ is a compact element of $\mathscr{A}$.
\item[{\rm (3)}]  $A $ is a compact element of $\mathscr{A}$  if and only
 if $\uparrow A^{c}$  is a compact lattice.
 \item[{\rm (4)}]  If  $A $ is a compact element of $\mathscr{A}$
 then $B $ is a compact element of $\mathscr{A}$, for every
 $B\in ‎\downarrow A$.
\end{enumerate}
\end{corollary}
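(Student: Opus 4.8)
The plan is to carry everything through with the concrete reformulation of compactness already in force in Remark~\ref{M200} and in the proof of Lemma~\ref{M200-1}: an element $A\in\mathscr{A}$ is a compact element of $\mathscr{A}$ exactly when every family $\{C_{\lambda}\}_{\lambda\in\Lambda}\subseteq\mathscr{A}$ with $A\subseteq\bigcup_{\lambda\in\Lambda}C_{\lambda}$ admits a finite $\Lambda_{0}\subseteq\Lambda$ with $A\subseteq\bigcup_{\lambda\in\Lambda_{0}}C_{\lambda}$. All four items are then instances of a single manipulation: given a cover of the set in question by measurable sets, enlarge it by one suitably chosen member of $\mathscr{A}$ so that it becomes a cover of a set already known to be compact, pass to a finite subcover there, and intersect back down.

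For (1), assume (without loss of generality) that $A$ is a compact element, and let $\{C_{\lambda}\}_{\lambda\in\Lambda}\subseteq\mathscr{A}$ cover $A\cap B$. Since $B^{c}\in\mathscr{A}$, the family $\{C_{\lambda}\}_{\lambda\in\Lambda}\cup\{B^{c}\}$ is contained in $\mathscr{A}$ and covers $A$; compactness of $A$ then gives a finite $\Lambda_{0}\subseteq\Lambda$ with $A\subseteq B^{c}\cup\bigcup_{\lambda\in\Lambda_{0}}C_{\lambda}$, and intersecting with $B$ yields $A\cap B\subseteq\bigcup_{\lambda\in\Lambda_{0}}C_{\lambda}$. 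For (2), a family $\{C_{\lambda}\}_{\lambda\in\Lambda}\subseteq\mathscr{A}$ covering $A\cup B$ covers $A$ and covers $B$; compactness of each produces finite $\Lambda_{1},\Lambda_{2}\subseteq\Lambda$ with $A\subseteq\bigcup_{\lambda\in\Lambda_{1}}C_{\lambda}$ and $B\subseteq\bigcup_{\lambda\in\Lambda_{2}}C_{\lambda}$, and $\Lambda_{1}\cup\Lambda_{2}$ is the required finite index set. For (4), suppose $B\subseteq A$ with $A$ compact, and let $\{C_{\lambda}\}_{\lambda\in\Lambda}\subseteq\mathscr{A}$ cover $B$; then $\{C_{\lambda}\}_{\lambda\in\Lambda}\cup\{A\setminus B\}$ covers $A$, so some finite $\Lambda_{0}$ gives $A\subseteq (A\setminus B)\cup\bigcup_{\lambda\in\Lambda_{0}}C_{\lambda}$, and intersecting with $B$ (using $B\cap(A\setminus B)=\emptyset$) gives $B\subseteq\bigcup_{\lambda\in\Lambda_{0}}C_{\lambda}$.

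Item (3) needs one preliminary remark: $\uparrow A^{c}$ is a bounded sublattice of $\mathscr{A}$ with bottom $A^{c}$ and top $\top=X$, so ``$\uparrow A^{c}$ is a compact lattice'' means precisely that $X$ is a compact element of $\uparrow A^{c}$, i.e.\ that every $\{C_{\lambda}\}_{\lambda\in\Lambda}\subseteq\uparrow A^{c}$ with $\bigcup_{\lambda\in\Lambda}C_{\lambda}=X$ has a finite subfamily with union $X$. If $A$ is a compact element of $\mathscr{A}$ and $\{C_{\lambda}\}_{\lambda\in\Lambda}\subseteq\uparrow A^{c}$ covers $X$, then it covers $A$, so a finite $\Lambda_{0}$ gives $A\subseteq\bigcup_{\lambda\in\Lambda_{0}}C_{\lambda}$; adjoining any one further index $\mu\in\Lambda$ (so $A^{c}\subseteq C_{\mu}$) yields $X=A\cup A^{c}\subseteq\bigcup_{\lambda\in\Lambda_{0}\cup\{\mu\}}C_{\lambda}$, proving $X$ compact in $\uparrow A^{c}$. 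Conversely, if $\uparrow A^{c}$ is a compact lattice and $\{B_{\lambda}\}_{\lambda\in\Lambda}\subseteq\mathscr{A}$ covers $A$, then $\{B_{\lambda}\cup A^{c}\}_{\lambda\in\Lambda}\subseteq\uparrow A^{c}$ covers $X$; a finite subcover $\bigcup_{\lambda\in\Lambda_{0}}(B_{\lambda}\cup A^{c})=X$ rewrites as $A\subseteq\bigcup_{\lambda\in\Lambda_{0}}B_{\lambda}$, so $A$ is a compact element of $\mathscr{A}$.

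I do not anticipate a real obstacle. The one point that has to be settled at the outset is the finite-subcover description of ``compact element of $\mathscr{A}$'' (which is exactly the one used in Remark~\ref{M200} and Lemma~\ref{M200-1}, and is equivalent to the lattice-theoretic definition once one notes that a set-theoretic union lying in $\mathscr{A}$ is already the join there); after that each item reduces to the routine ``enlarge, extract a finite subcover, intersect'' computation, with (3) additionally needing the observation that $\uparrow A^{c}$ is bounded with top $X$.
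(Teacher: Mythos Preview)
Your proof is correct. The paper's own proof reads simply ``It is evident'', so you have spelled out the routine cover manipulations that the paper omits, using precisely the finite-subcover description of compactness already adopted in Remark~\ref{M200} and Lemma~\ref{M200-1}.
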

\begin{proof}
It is evident.
\end{proof}
Let $\max( M(X))$ be  the set of all maximal ideals of  $M(X)$.
Throughout this paper, we put
$$
\mathcal{F}(f):=\{M\in  \max(M(X)):f\in M \},
$$
for every $f\in M(X)$.
Hence,
$\mathcal{F}(\mathbf1)=\emptyset$
and  $\mathcal{F}(\mathbf0)= \max(M(X))$.
Also, by Propositions \ref{M95} and \ref{M105},
 if
$(X, \mathscr{A})$ is a   compact $T$-measurable space, then
$$
\mathcal{F}(f)= \{M_x:x\in z(f)\},
$$
for every $f\in M(X)$.
\begin{proposition} \label{M210} M210
For every   compact $T$-measurable  space $(X, \mathscr{A})$,  $$\big(\max(M(X)), \left\{\mathcal{F}(f):f\in M(X) \right\}\big)$$ is a $T$-measurable space.
\end{proposition}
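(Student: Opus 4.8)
The plan is to exhibit a bijection between $X$ and $\max(M(X))$ that carries $\mathscr{A}$ onto $\{\mathcal{F}(f):f\in M(X)\}$, and then simply transport the $\sigma$-algebra structure and the $T$-separation property from $(X,\mathscr{A})$.

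First I would check that $\varphi\colon X\to\max(M(X))$ defined by $\varphi(x)=M_x$ is a bijection. Each $M_x$ is a maximal ideal by Proposition \ref{M45}. Injectivity is immediate from Proposition \ref{M105}: as $(X,\mathscr{A})$ is $T$-measurable, distinct $x,y\in X$ give $M_x\ne M_y$. For surjectivity, let $M\in\max(M(X))$; since $(X,\mathscr{A})$ is compact, $M$ is fixed by Proposition \ref{M95}, so $\bigcap_{f\in M}Z(f)\ne\emptyset$, and Proposition \ref{M105} forces $|\bigcap_{f\in M}Z(f)|\le 1$, whence $\bigcap_{f\in M}Z(f)=\{x\}$ for a unique $x\in X$. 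Then $f\in M\Rightarrow x\in Z(f)\Rightarrow f\in M_x$, so $M\subseteq M_x\subsetneq M(X)$, and maximality of $M$ gives $M=M_x$.

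Next I would observe that, as recorded just before the statement, for a compact $T$-measurable space $\mathcal{F}(f)=\{M_x:x\in Z(f)\}=\varphi(Z(f))$ for every $f\in M(X)$; since $f\mapsto Z(f)$ ranges exactly over $Z_{\mathscr{A}}[X]=\mathscr{A}$ by Proposition \ref{M15}, we get $\{\mathcal{F}(f):f\in M(X)\}=\{\varphi(A):A\in\mathscr{A}\}$. Because $\varphi$ is a bijection, the assignment $A\mapsto\varphi(A)$ commutes with complementation (using surjectivity of $\varphi$) and with countable unions, and sends $X$ to $\max(M(X))$; hence the image of the $\sigma$-algebra $\mathscr{A}$ is a $\sigma$-algebra on $\max(M(X))$, so $\big(\max(M(X)),\{\mathcal{F}(f):f\in M(X)\}\big)$ is a measurable space.

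It remains to verify the $T$-property. Given distinct $M,M'\in\max(M(X))$, write $M=M_x$ and $M'=M_y$ with $x\ne y$ via $\varphi$; by $T$-measurability of $(X,\mathscr{A})$ choose $A\in\mathscr{A}$ with $x\in A$ and $y\notin A$. By Remark \ref{chi}, $Z(\chi_{X\setminus A})=A$, so $\mathcal{F}(\chi_{X\setminus A})=\varphi(A)$ is a measurable set of $\max(M(X))$ containing $M_x$ but not $M_y$, which is the required separation. The one nontrivial point is the bijection in the first step, and specifically its surjectivity, which is precisely where compactness (to force fixedness) and $T$-measurability (to force uniqueness of the witnessing point) are both used; everything afterwards is formal transport along $\varphi$.
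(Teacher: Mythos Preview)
Your proof is correct and rests on the same core observation the paper uses: under compactness and $T$-measurability every maximal ideal is $M_x$ for a unique $x$, so $\mathcal{F}(f)=\{M_x:x\in Z(f)\}$ (this bijection is exactly what the paper records, citing Propositions \ref{M95} and \ref{M105}, in the paragraph immediately preceding the statement). The execution differs in two minor ways. First, the paper verifies the $\sigma$-algebra axioms by writing down explicit witnesses: it shows $\mathcal{F}(f)^{c}=\mathcal{F}(\chi_{z(f)})$ and $\bigcup_{n}\mathcal{F}(f_n)=\mathcal{F}(\chi_{\bigcap_n\coz(f_n)})$, whereas you argue abstractly that a bijection carries a $\sigma$-algebra to a $\sigma$-algebra. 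Second, for the $T$-property the paper uses the more direct observation that distinct maximal ideals $M_1,M_2$ must differ in some $f\in M_1\setminus M_2$, and then $\mathcal{F}(f)$ itself separates them; your route through a separating $A\in\mathscr{A}$ on $X$ is fine but slightly longer. Your approach has the advantage of essentially establishing Proposition \ref{M215} (the homeomorphism $X\cong\max(M(X))$) en route, while the paper's explicit formulas make the $\sigma$-algebra structure on $\max(M(X))$ concretely visible.
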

\begin{proof}
Consider $f\in M(X)$
and
$\{f_n\}_{n\in\mathbb N}\subseteq  M(X)$. Hence,
\[
\mathcal{F}(f)^{c}
=\{M_x :  x\in {coz (f)} \}
=\{M_x :  x\in z(\chi_ {_ {z(f)}}) \}
=\mathcal{F}(\chi_{_{z(f)}})
\]
and
\[
\bigcup_{n\in \mathbb{N}} \mathcal{F}(f_n)=
\bigcup_{n\in \mathbb{N}}\{M_x:x\in z(f_n) \}
=\{M_x: x\in \bigcup_{n\in \mathbb{N}} z(f_n)\}
=\mathcal{F}(\chi_{_{(\bigcap_{n\in \mathbb{N}}}
{ \coz(f_n)})}).
\]
Also, if  $M_1$ and $M_2$ are distinct points in $\max(M(X))$, then there exists an element $f\in M(X)$
such that $f\in M_1\setminus M_2$, which follows that
$M_1\in  \mathcal{F}(f)$ and $M_2\not\in  \mathcal{F}(f)$. Hence  $\big(\max(M(X)), \left\{\mathcal{F}(f):f\in M(X) \right\}\big)$ is a $T$-measurable space.
\end{proof}

\begin{definition}  \label{M 212} M212
Let $(X_1,\mathscr{A}_1)$  and $(X_2,\mathscr{A}_2)$ are
measurable spaces.
We say
 that $(X_1,\mathscr{A}_1)$  and $(X_2,\mathscr{A}_2)$ are  {\it homeomorphic},
 provided that
 there exists a  one to one
and onto function
$f:X_1\rightarrow X_2$ such that
$$A\in \mathscr{A}_1  \Leftrightarrow f(A)\in\mathscr{A}_2,$$ for every $A\in \mathscr{A}_1$.
\end{definition}
If we denote "$(X_1,\mathscr{A}_1)$ is homeomorphic with $(X_2,\mathscr{A}_2)$"
by $X\cong Y$, then the relationship $ \cong $
is an equivalence relation on any set
of measurable spaces.

\begin{proposition} \label{M215} M215
For every   compact $T$-measurable  space $ (X, \mathscr{A})$,  $$X\cong \max(M(X))$$ as measure spaces.
\end{proposition}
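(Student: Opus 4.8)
The plan is to exhibit an explicit homeomorphism of measurable spaces between $(X,\mathscr{A})$ and $\big(\max(M(X)),\{\mathcal{F}(f):f\in M(X)\}\big)$, the latter being a $T$-measurable space by Proposition \ref{M210}. Since $(X,\mathscr{A})$ is compact and $T$-measurable, Propositions \ref{M95} and \ref{M105} tell us that every maximal ideal of $M(X)$ is fixed and of the form $M_x$ for exactly one $x\in X$; so the natural candidate is the map $\varphi:X\to\max(M(X))$ given by $\varphi(x)=M_x$. First I would check that $\varphi$ is well-defined, injective, and surjective: injectivity is exactly the $T$-measurable property together with Proposition \ref{M105}(2) (distinct points give distinct $M_x$), and surjectivity is the statement that every maximal ideal is fixed, hence equals some $M_x$ (Propositions \ref{M95}, \ref{M45}, and the maximality argument in \ref{M105}).

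Next I would verify the defining equivalence of Definition \ref{M 212}, namely that $A\in\mathscr{A}$ iff $\varphi(A)\in\{\mathcal{F}(f):f\in M(X)\}$, for every $A\subseteq X$ (in fact for $A\in\mathscr{A}$ in one direction and then use the equivalence to pin down the converse). The key computation is that for $A\in\mathscr{A}$, taking $f=\chi_{_A}$ (Remark \ref{chi}, so $Z(f)=A^c$, equivalently $\coz(f)=A$) and using the displayed identity $\mathcal{F}(g)=\{M_x:x\in z(g)\}$ valid on compact $T$-measurable spaces, one gets
\[
\varphi(A)=\{M_x:x\in A\}=\{M_x:x\in z(\chi_{_{A^c}})\}=\mathcal{F}(\chi_{_{A^c}}),
\]
so the image of every measurable set is of the required form. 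Conversely, if $\varphi(A)=\mathcal{F}(f)$ for some $f\in M(X)$, then $\{M_x:x\in A\}=\{M_x:x\in z(f)\}$, and since $\varphi$ is a bijection this forces $A=z(f)\in\mathscr{A}$ by Proposition \ref{M15}. This establishes both directions of the biconditional simultaneously.

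The only remaining point is to confirm that $\varphi$ is onto as a map of \emph{measurable spaces} in the sense that every basic set $\mathcal{F}(f)$ of the target arises as $\varphi$ of a measurable set, which is immediate since $\mathcal{F}(f)=\{M_x:x\in z(f)\}=\varphi(z(f))$ and $z(f)\in\mathscr{A}$. Assembling these facts: $\varphi$ is a bijection satisfying $A\in\mathscr{A}\Leftrightarrow\varphi(A)\in\{\mathcal{F}(f):f\in M(X)\}$, which is precisely $X\cong\max(M(X))$ by Definition \ref{M 212}. I expect the main obstacle to be purely bookkeeping: making sure the displayed formula $\mathcal{F}(f)=\{M_x:x\in z(f)\}$ is legitimately invoked (it rests on compactness via Proposition \ref{M95} to know all maximal ideals are fixed, and on $T$-measurability via Proposition \ref{M105} to know the correspondence $x\mapsto M_x$ is bijective), and keeping the roles of $z(f)$ versus $\coz(f)$ and of $\chi_{_A}$ versus $\chi_{_{A^c}}$ straight throughout. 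No deep new idea is needed beyond what Propositions \ref{M95}, \ref{M105}, and \ref{M210} already provide.
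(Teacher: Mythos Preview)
Your proposal is correct and follows essentially the same approach as the paper's proof: define $\varphi(x)=M_x$, use Propositions \ref{M95} and \ref{M105} for bijectivity, and verify the measurability correspondence via the identity $\varphi(z(f))=\mathcal{F}(f)$ together with Proposition \ref{M15}. The only cosmetic difference is that the paper writes the forward direction directly as $\varphi[z(f)]=\mathcal{F}(f)$ rather than passing through $\chi_{_{A^c}}$, but the content is identical.
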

\begin{proof}
We define
\[
\begin{array}{rll}
\varphi:X
&\longrightarrow &\max(M(X))\\[1mm]
x &\longmapsto &M_x.
\end{array}
\]
By  Propositions \ref{M95} and \ref{M105},
$\varphi$ is a one-one correspondence
and also, for every $f\in M(X)$, we have
$$\varphi[z(f)]=\{M_x:x\in z(f)\}=\mathcal{F}{(f)}$$
and
$$\varphi^{-1}(\mathcal{F}{(f)})=\{x\in X:x\in z(f)\}=z(f).$$
Therefore, by Proposition \ref{M15},
$X\cong \max(M(X))$ as measure spaces.
\end{proof}
The measurable space is defined in Proposition
\ref{M215},  is called the Stone measure on $\max( M(X))$.

\begin{corollary} \label{M220} M220
If $(X, \mathscr{A})$ and  $(Y, \mathfrak{M^{\prime}})$ are two compact
$T$-measurable spaces, then  $X\cong Y$ as measurable spaces if and only if
 $M(X)\cong  M (Y)$ as rings.
\end{corollary}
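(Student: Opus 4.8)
The plan is to prove the two directions separately, with the forward direction (ring isomorphism implies measurable homeomorphism) being the substantive one. For the easy direction, suppose $\psi : (X,\mathscr{A}) \to (Y,\mathfrak{M'})$ is a measurable homeomorphism in the sense of Definition \ref{M 212}. Then the assignment $g \mapsto g \circ \psi$ sends $M(Y)$ into $M(X)$ (since $\psi$ and $\psi^{-1}$ both pull measurable sets back to measurable sets, by the homeomorphism condition together with Lemma \ref{M270}), and it is a ring homomorphism by pointwise computation exactly as in the proof of Proposition \ref{M290}; its inverse is $f \mapsto f \circ \psi^{-1}$, so it is a ring isomorphism. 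Hence $M(X) \cong M(Y)$ as rings.

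For the converse, assume $\Phi : M(X) \to M(Y)$ is a ring isomorphism. Since $(X,\mathscr{A})$ and $(Y,\mathfrak{M'})$ are compact $T$-measurable spaces, Proposition \ref{M215} gives measurable homeomorphisms $X \cong \max(M(X))$ via $x \mapsto M_x$ and $Y \cong \max(M(Y))$ via $y \mapsto M_y^{Y}$. So it suffices to produce a measurable homeomorphism $\max(M(X)) \cong \max(M(Y))$ induced by $\Phi$. The ring isomorphism $\Phi$ induces a bijection $M \mapsto \Phi(M)$ from $\max(M(X))$ onto $\max(M(Y))$, and this bijection satisfies $\Phi(\mathfrak M(f)) = \mathfrak M(\Phi(f))$ for every $f \in M(X)$ — that is, it carries the family $\{\mathcal{F}(f) : f \in M(X)\}$ bijectively onto $\{\mathcal{F}(g) : g \in M(Y)\}$, because $M \ni f \iff \Phi(M) \ni \Phi(f)$ and $\Phi$ is onto. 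By Proposition \ref{M210}, these two families are precisely the $\sigma$-algebras making $\max(M(X))$ and $\max(M(Y))$ into $T$-measurable spaces (here one uses that for a compact $T$-measurable space the $\sigma$-algebra $\{\mathcal F(f):f\in M(X)\}$ is exactly the Stone structure described just before Proposition \ref{M210}); since the induced bijection matches the two generating families set-for-set, it is a measurable homeomorphism. Composing the three measurable homeomorphisms $X \cong \max(M(X)) \cong \max(M(Y)) \cong Y$ yields $X \cong Y$.

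The main obstacle I anticipate is verifying carefully that the $\sigma$-algebra on $\max(M(X))$ appearing in Proposition \ref{M210} really is transported by the abstract ring isomorphism, i.e. that $\Phi$ takes $\mathcal F(f)$ to $\mathcal F(\Phi(f))$ as sets of maximal ideals and that this is not merely a bijection of underlying sets but an isomorphism of the measurable structures. This hinges on the identity $\Phi(\mathfrak M(f)) = \mathfrak M(\Phi(f))$, which is purely formal, combined with the fact — established in Proposition \ref{M210} and the remark preceding it — that every measurable set of the Stone structure has the form $\mathcal F(f)$. A secondary point to handle cleanly is that compactness is used twice: once to invoke Proposition \ref{M95} (so that every maximal ideal is fixed, hence of the form $M_x$, giving the description $\mathcal F(f) = \{M_x : x \in z(f)\}$), and once implicitly to know that $\max(M(X))$ with the Stone structure is genuinely a measurable space with the stated $\sigma$-algebra. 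Once these are in place the argument is a short chain of equivalences and the result follows.

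\begin{proof}
\textit{Sufficiency.}
Suppose $\psi : X \to Y$ is a one-to-one and onto function with $A \in \mathscr{A} \Leftrightarrow \psi(A) \in \mathfrak{M'}$ for every $A \in \mathscr{A}$. Then for every $B \in \mathfrak{M'}$, $\psi^{-1}(B) \in \mathscr{A}$ and for every $A \in \mathscr{A}$, $(\psi^{-1})^{-1}(A) = \psi(A) \in \mathfrak{M'}$, so by Lemma \ref{M270} the maps $g \mapsto g \circ \psi$ and $f \mapsto f \circ \psi^{-1}$ carry $M(Y)$ into $M(X)$ and $M(X)$ into $M(Y)$ respectively. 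A pointwise computation as in the proof of Proposition \ref{M290} shows $g \mapsto g \circ \psi$ is a ring homomorphism, and $f \mapsto f \circ \psi^{-1}$ is its two-sided inverse. Hence $M(X) \cong M(Y)$ as rings.

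\textit{Necessity.}
Suppose $\Phi : M(X) \to M(Y)$ is a ring isomorphism. By Proposition \ref{M215}, the maps $x \mapsto M_x$ and $y \mapsto M_y$ are measurable homeomorphisms of $X$ onto $\max(M(X))$ and of $Y$ onto $\max(M(Y))$, equipped with their Stone measurable structures $\{\mathcal{F}(f) : f \in M(X)\}$ and $\{\mathcal{F}(g) : g \in M(Y)\}$ (Proposition \ref{M210}). Define $\widehat{\Phi} : \max(M(X)) \to \max(M(Y))$ by $\widehat{\Phi}(M) = \Phi(M)$; since $\Phi$ is a ring isomorphism, $\widehat{\Phi}$ is a bijection. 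For every $f \in M(X)$ and every $M \in \max(M(X))$ we have $f \in M \iff \Phi(f) \in \Phi(M)$, and since $\Phi$ is onto $M(Y)$ this yields
$$
\widehat{\Phi}\big(\mathcal{F}(f)\big) = \{\Phi(M) : M \in \max(M(X)),\ f \in M\} = \{N \in \max(M(Y)) : \Phi(f) \in N\} = \mathcal{F}(\Phi(f)).
$$
Thus $\widehat{\Phi}$ carries the generating family $\{\mathcal{F}(f) : f \in M(X)\}$ bijectively onto $\{\mathcal{F}(g) : g \in M(Y)\}$, so $B \in \{\mathcal{F}(g) : g \in M(Y)\} \Leftrightarrow \widehat{\Phi}^{-1}(B) \in \{\mathcal{F}(f) : f \in M(X)\}$; that is, $\widehat{\Phi}$ is a measurable homeomorphism. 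Composing, $X \cong \max(M(X)) \cong \max(M(Y)) \cong Y$ as measurable spaces.
\end{proof}
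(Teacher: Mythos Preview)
Your proof is correct and is essentially a fully spelled-out version of the paper's own argument, which simply reads ``By Proposition \ref{M215}, it is obvious'': you invoke $X\cong\max(M(X))$ and $Y\cong\max(M(Y))$ from Proposition \ref{M215} and then verify that a ring isomorphism transports the Stone $\sigma$-algebras $\{\mathcal F(f)\}$ via $\widehat\Phi(\mathcal F(f))=\mathcal F(\Phi(f))$, exactly as the paper intends. (One cosmetic point: relative to the paper's convention your labels \textit{Necessity} and \textit{Sufficiency} are swapped.)
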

\begin{proof}
By  Proposition \ref{M215}, it is obvious.
\end{proof}
\section*{Acknowledgements} 


We would like to thank 
 from professor  Gader Sadighi 
 and  
 professor   Ali Akbar Arefi Jamal, 
 Hakim Sabzevari University, 
  for their detailed comments 
  and suggestions for the manuscript.


\begin{thebibliography}{30}

\bibitem{AminiAminiMomtahanShirdareh2013}
      A. Amini, B. Amini, A. Momtahan and M.H. Shirdareh Haghigi,
      Generalized Rings of Measurable and Continuous Functions,
      Bulletin of the Iranian Mathematical Society Vol. 39 No. 1 (2013),   49-64.

\bibitem{AzadiHenriksenMomtahan2009}
        H. Azadi, M. Henriksen and E. Momtahan,
        \emph{Some properties of algebras of real-valued measurable functions},
        Acta Math. Hungar. \textbf{124} (2009), no. 1-2, 15-23.

\bibitem{AzarpanahKaramzadehRezai2000}
F.~Azarpanah, O.A.S. Karamzadeh, and A.~Rezai Aliabad, \emph{On ideals
  consisting entirely of zero divisors}, Communications in Algebra
  \textbf{28:2} (2000), 1061--1073.

\bibitem{Cech1937}
            E. $\check{C}$ech,
            On bicompact spaces,
            Ann. of Math. 38 (1937), 823-844.


\bibitem{EbrahimiMahmoudi1996}
               M.M. Ebrahimi  and M. Mahmoudi,
               Frame,
               Technical Report, Shahid Beheshti University, 1996.

\bibitem{EstajiMahmoudi}
          A.A. Estaji and A. Mahmoudi Darghadam,
          Rings of continuous functions vanishing at infinity on a frame,
          Preprint. 

  \bibitem{Hewitt1948}
        E. Hewitt,
        Rings of real-valued continuous functions,
        Transactions of the American Mathematical Society Vol. 64, No. 1 (Jul., 1948), pp. 45-99.




\bibitem{GelfandKolmogoroff1939}
           I. Gelfand and A. Kolmogoroff,
           On rings of continuous functions on topological spaces,
           Dokl. Akad. Nauk SSSR 22 (1939), 11–15.


\bibitem{Gillman1979}
           L. Gillman and M. Jerison,
           Rings of continuous functions,
          Springer-Verlag, 1976.

\bibitem{GillmanHenriksenJerison1954}
            L. Gillman,  L. Henriksen and M. Jerison,
            On a theorem  of Gelfand and colnoogoroff concerning maximal ideals in rings of continuous functions,
            Proc. Amer. Math. Soc. 5 (1954),  447-455.

\bibitem{Hager1971}
           A. Hager,
           Algebras of measurable functions,
          Duke Math. J. 38 (1971), 21–27.

\bibitem{Mason1973}
G.~Mason, \emph{z-ideals and prime ideals}, J. Algebra \textbf{26} (1973),
  280--297.


 \bibitem{Momtahan2010}
           E. Momtahan,
           Essential ideals in rings of measurable functions,
           Comm. Algebra 38 (2010), no. 12, 4739-4746.

\bibitem{PicadoPultr2012}
    J. Picado  and A. Pultr,  
    Frames and locales, topology without point, frontiers in mathematics,
  birkhauser-springer basel ag, basel.
  2012.
\bibitem{Rudin1987}
             Walter Rudin,
             \emph{Real and complex analysis},
             3rd ed.  New York: McGraw-Hill Book Co., 1987.

\bibitem{Stone1937}
              M. H. Stone,
              Applications of the theory of Boolean rings to general topology,
              Trans. Amer. Math. Soc. 41 (1937),375-481.

\bibitem{Tychonoff1930}
        A. Tychonoff,
        $\ddot{U}$ber die topologische Erweiterung von Ra$\ddot{u}$men,
         Math. Ann. vol. 102 (1930) pp. 544-561.


\bibitem{Viertl1977}
        Reinhard Viertl, 
         A note on maximal ideals and residue class rings of rings of measurable functions. 
         An. Acad. Brasil. Ci. 49 (1977), no. 3, 357-358.

\bibitem{Willard1970}
Stephen Willard, \emph{General topology}, Addison-Wesley Pub. Co., 1970.


\end{thebibliography}
\end{document}